\numberwithin{equation}{section}
\theoremstyle{definition}
	\newtheorem{definition}{Definition} 
	\newtheorem*{definition*}{Definition}
	\numberwithin{definition}{section}
\theoremstyle{plain}
	\newtheorem{lemma}[definition]{Lemma}
	\newtheorem{proposition}[definition]{Proposition}
	\newtheorem{theorem}[definition]{Theorem}
	\newtheorem*{theorem*}{Theorem}
	\newtheorem{corollary}[definition]{Corollary}
	\newtheorem*{claim*}{Claim}
\theoremstyle{remark}
	\newtheorem{question}[definition]{Question}
	\newtheorem{remark}[definition]{Remark}
\DeclareMathOperator{\Ad}{Ad}
\DeclareMathOperator{\ad}{ad}
\renewcommand{\bf}{\textbf}
\renewcommand{\phi}{\varphi}
\newcommand{\C}{\mathbb{C}}
\newcommand{\R}{\mathbb{R}}
\newcommand{\gfr}{\mathfrak{g}}
\newcommand{\hfr}{\mathfrak{h}}
\newcommand{\kfr}{\mathfrak{k}}
\DeclareMathOperator{\im}{im}
\newcommand{\bR}{\mathbb{R}}
\renewcommand\phi{\ensuremath{\varphi}}
\renewcommand\epsilon{\ensuremath{\varepsilon}}
\renewcommand{\L}{\mathfrak}
\newcommand{\ra}{\to}
\numberwithin{equation}{section}
\renewcommand{\emph}{\textbf}
\numberwithin{equation}{section}
\renewcommand{\L}{\mathfrak}
\renewcommand{\ra}{\to}
\renewcommand{\L}{\mathfrak}
\begin{document}

\title[Quasi-state rigidity]{Quasi-state rigidity for finite-dimensional Lie algebras}
\author{Michael Bj\"orklund and Tobias Hartnick}
\date{\today}
\begin{abstract}
We say that a Lie algebra $\gfr$ is quasi-state rigid if every Ad-invariant continuous Lie quasi-state on it is the directional derivative of a homogeneous quasimorphism. Extending work of Entov and Polterovich, we show that every reductive Lie algebra, as well as the algebras $\C^n \rtimes \L{u}(n)$, $n \geq 1$, are rigid. On the other hand, a Lie algebra which surjects onto the three-dimensional Heisenberg algebra is not rigid. For Lie algebras of dimension $\leq 3$ and for solvable Lie algebras which split over a codimension one abelian ideal, we show that this is the only obstruction to rigidity.
\end{abstract}

\maketitle

\bf{Keywords:} Lie quasi-states, Lie algebras, Rigidity, Quasimorphisms.\\

\section{Introduction}

\subsection{Background and history}
Let $\L g$ be a real Lie algebra. A function $\zeta: \gfr \to \R$ is called a \emph{Lie quasi-state} if 
\[
\zeta(aX + bY) = a\zeta(X) + b\zeta(Y).
\]
for all $a, b \in \R$ and every pair $(X, Y)$ of \emph{commuting} elements in $\gfr$. In other words, $\zeta$ is linear on abelian subalgebras of $\gfr$. 

We refer the reader to \cite{E14, EP09} for an overview of the history of the notion of Lie quasi-states. Roughly speaking, Lie quasi-states (or closely related notions) arose more or less independently in three different contexts: In connection with the foundations of quantum mechanics (see e.g. \cite{Gl57, Gleason2, Gleason3}), in symplectic topology (see the recent survey \cite{E14} for references) and in the study of quasimorphisms on finite (see e.g. \cite{GuichardetWigner, BuMo, Shtern, Surface, BSH, BSH2, Calegari}) and infinite-dimensional (see e.g. \cite{EP1, EPBiran, BS, Usher}) Lie groups.

One of the basic theorems in the mathematical foundations of quantum mechanics is Gleason's theorem on rigidity of frame functions \cite{Gl57, Gleason2}. Although Lie quasi-states do not feature explicitly in Gleason' work, his result is essentially equivalent to the statement that every locally bounded Lie quasi-state on $\L u(n)$ is linear, provided $n \geq 3$. (See the introductions of \cite{EPZ} and \cite{EP09} for a discussion of this equivalence.) This can be seen as the first major non-trivial result concerning Lie quasi-states.

In symplectic topology, Lie quasi-states constructed from Floer homology and spectral invariants (as in \cite{EP1, EPBiran, Usher}) have recently become an important tool in studying the displacability of subsets of symplectic manifolds under Hamiltonian diffeomorphisms (see \cite{EP2}). Entov's recent ICM address \cite{E14} gives an overview of these developments and an extensive list of references. In most of these symplectic applications, the Lie quasi-states considered arise as directional derivatives of continuous quasimorphisms on the corresponding infinite-dimensional Lie groups.

Both in the quantum-mechanical and the symplectic setting the focus is naturally on Lie quasi-states on infinite-dimensional Lie algebras. A systematic analysis of Lie quasi-states on finite-dimensional Lie algebras was initiated only recently by Entov and Polterovich in \cite{EP09}. Even if one is primarily interested in the infinite-dimensional case, such an analysis is relevant in order to understand the behaviour of Lie quasi-states along finite-dimensional subalgebras. However, to the best of our knowledge \cite{EP09} remains the only paper so far which concerns Lie quasi-states on finite-dimensional Lie algebras. 

The purpose of the present paper is to extend some of the results from \cite{EP09} to larger classes of finite-dimensional Lie algebras and to obtain a clearer picture about Lie quasi-states on general finite-dimensional Lie algebras through some key examples. Our main focus will be on Ad-invariant Lie quasi-states, since these are comparatively easy to handle and at the same time the most relevant ones in applications. One particular goal of this article is to understand their connection with homogeneous quasimorphisms on finite-dimensional Lie groups.

\subsection{Integrable Lie quasi-states and homogeneous quasimorphisms}
From now on we assume that all Lie algebras are real and finite-dimensional. We denote by $Q(\gfr)$ the space of Lie quasi-states on $\gfr$ and by $\mathcal Q(\gfr) \subset Q(\gfr)$ the subspace of continuous Lie quasi-states on $\gfr$ respectively. Note that the adjoint group associated to the Lie algebra $\gfr$ acts on these spaces by $g.\zeta(X) = \zeta({\rm Ad}(g)^{-1}(X))$.

The notion of a Lie quasi-state on a Lie algebra $\L g$ has a global counterpart on the level of Lie groups. Indeed, given a Lie group $G$ (not necessarily connected) we can consider \emph{integrated Lie quasi-states}, i.e. Borel measurable functions $f: G \to \R$ such that
\begin{equation*}
f(gh) = f(g) + f(h).
\end{equation*}
for all pairs $(g,h)$ of commuting elements in $G$. Any integrated Lie quasi-state $f$ on a Lie group $G$ induces a Lie quasi-state $\zeta$ on the Lie algebra $\gfr$ on $G$, given by the \emph{directional derivative}
\[
\zeta(X) =f(\exp_G(X)), \quad \textrm{for all $X \in \gfr$},
\]
where $\exp_G$ denotes the exponential map from $\gfr$ to $G$. 
We note that $\zeta$ is continuous provided $f$ is. If $\zeta$ is the directional derivative of an integrated Lie quasi-state $f$, then we say that $\zeta$ is \emph{integrable} and we say that $\zeta$ \emph{integrates} to $f$. Typical examples of integrated Lie quasi-states are given by homogeneous (continuous) quasimorphisms, i.e. continuous functions $f: G\to \R$ which satisfy
\[
D(f) := \sup_{g,h \in G} |f(gh)-f(g)-f(h)| < \infty, 
\]
and $f(g^n) = n \cdot f(g)$ for all $n \in \mathbb Z$. We note that if $g$ and 
$h$ in $G$ commute, then 
\[
n \cdot f(gh) = f((gh)^n) = f(g^n h^n) - f(g^n) - f(h^n) + n \cdot (f(g) + f(h))
\]
for all $n$, which readily implies that $f(gh) = f(g) + f(h)$ upon dividing with 
$n$ and letting $n$ tend to infinity. In particular, every homogeneous (continuous) quasimorphism is a conjugation-invariant integrated Lie quasi-state on $G$, and thus its directional derivative  gives rise to an {\rm Ad}-invariant 
(continuous) Lie quasi-states on $\gfr$.  

It turns out that homogeneous quasimorphisms on connected Lie groups are rare. On a solvable Lie group every homogeneous quasimorphism  is in fact a homomorphism, and a simple Lie group admits a (non-trivial) homogeneous quasimorphism if and only if it has infinite center, in which case there is a unique (non-trivial) homogeneous quasimorphism up 
to multiples (see e.g. \cite{BSH2}).

\subsection{Lie quasi-state rigidity}

In the sequel we shall denote by $Q_{\rm Ad}(\gfr)$ and $Q_{\rm int}(\gfr)$ the spaces of Ad-invariant and integrable Lie quasi-states respectively. We also denote by $Q_{\rm qm}(\gfr)$ the space of Lie quasi-states which are directional derivatives of homogeneous quasimorphisms on some Lie group $G$ with Lie algebra $\gfr$. We then use the notations $\mathcal Q_{\rm Ad}(\gfr)$, $\mathcal Q_{\rm int}(\gfr)$, $\mathcal Q_{\rm qm}(\gfr)$ for the corresponding subspaces of continuous quasi-states. We write $\gfr^*$ for the space of linear functionals on $\gfr$ and ${\rm Hom}(\gfr, \R) \subset \gfr^*$ for the subspace of Lie algebra homomorphisms from $\gfr$ to $\R$. Then we have a chain of inclusions
\[
{\rm Hom}(\gfr, \R) \subset Q_{\rm qm}(\gfr) = \mathcal Q_{\rm qm}(\gfr) \subset   \mathcal Q_{\rm Ad}(\gfr) \cap  \mathcal  Q_{\rm int}(\gfr) \subset \mathcal Q_{\rm Ad}(\gfr) \subset \mathcal Q(\gfr).
\]
It follows from the results in \cite{BuMo} that the first inclusion is an equality for all Lie algebras not containing a simple Hermitian Lie algebra, and it is of codimension one for simple Hermitian Lie algebras (see also \cite{BSH} for a more detailed discussion). In particular, $Q_{\rm qm}(\gfr)$ is always finite-dimensional, whereas all of the larger spaces listed above can be infinite-dimensional. We say that a Lie algebra $\gfr$ is \emph{quasi-state rigid}, or just \emph{rigid} for short, if 
\[
\mathcal Q_{\rm qm}(\gfr) =\mathcal Q_{\rm Ad}(\gfr).
\]
This implies in particular, that every {\rm Ad}-invariant Lie quasi-state is integrable and that $\mathcal Q_{\rm Ad}(\gfr)$ is finite-dimensional. For Lie algebras not containing a simple Hermitian Lie algebra it is moreover equivalent to showing that every {\rm Ad}-invariant Lie quasi-state is linear, hence a homomorphism. In the sequel we will refer to the problem of classifying all rigid Lie algebras as the \emph{(quasi-state)} \emph{rigidity problem}, and this problem will be the main concern of the present paper.

\subsection{Statements of main results}
We recall that every finite-dimensional Lie algebra $\gfr$ is the semidirect product of a semisimple Lie subalgebra and a maximal solvable ideal, called the solvable radical of $\gfr$. It is therefore a common strategy in the study of Lie algebras to separately analyze a problem for semisimple (or, slightly more generally, reductive) and solvable Lie algebras and then to attack the general `mixed' case by means of semi-direct products. This is also the approach which we will take towards the quasi-state rigidity problem here.

The reductive case is by far the simplest one, since the fine structure of reductive Lie algebras is very well understood. It was already established in \cite[Theorem 4.2]{EP09} that  for every Hermitian simple Lie algebra $\gfr$ we have 
\[
Q_{\rm Ad}(\L g) = {\mathcal Q}_{\rm Ad}(\L g)
={\mathcal Q}_{\rm qm}(\L g).
\] 
By  \cite[Theorem 4.1]{EP09} the same holds for any compact Lie algebra $\gfr$ (which is automatically reductive). In this paper, we strengthen this 
result as follows:
\begin{theorem}\label{MainTheorem1}
For every reductive Lie algebra $\gfr$ we have 
\[Q_{\rm Ad}(\L g) = {\mathcal Q}_{\rm Ad}(\L g)
={\mathcal Q}_{\rm qm}(\L g).\]
In particular, every reductive Lie algebra is rigid and every Ad-invariant Lie quasi-state on a non-Hermitian simple Lie algebra (such as $\L {sl}_n(\R)$, $n \geq 3$) is trivial.
 \end{theorem}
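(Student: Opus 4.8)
The plan is to reduce the statement to the case of simple Lie algebras and then to treat the compact, Hermitian, and remaining non-compact non-Hermitian simple factors separately. Write $\gfr = \mathfrak{z} \oplus \mathfrak{s}_1 \oplus \cdots \oplus \mathfrak{s}_k$, where $\mathfrak{z}$ is the center and the $\mathfrak{s}_i$ are the simple ideals of the derived algebra. Since the summands pairwise commute, for $X = X_0 + \sum_i X_i$ with $X_0 \in \mathfrak{z}$ and $X_i \in \mathfrak{s}_i$ the quasi-state property applied repeatedly to commuting pairs gives $\zeta(X) = \zeta(X_0) + \sum_i \zeta(X_i)$; moreover each restriction $\zeta|_{\mathfrak{s}_i}$ is again an ${\rm Ad}$-invariant Lie quasi-state on $\mathfrak{s}_i$, because $\operatorname{Int}(\mathfrak{s}_i)$ sits inside the adjoint group of $\gfr$. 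On $\mathfrak{z}$ the quasi-state is linear, hence continuous and integrating to a homomorphism; the compact simple factors are covered by \cite[Theorem 4.1]{EP09} and the Hermitian ones by \cite[Theorem 4.2]{EP09}. Thus everything comes down to showing that on a non-compact, non-Hermitian simple Lie algebra $\mathfrak{s}$ every ${\rm Ad}$-invariant Lie quasi-state $\zeta$ --- a priori \emph{not} assumed continuous --- vanishes identically.

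To treat such an $\mathfrak{s}$ I would use the additive real Jordan decomposition, refined into pairwise commuting elliptic, hyperbolic, and nilpotent parts $X = X_e + X_h + X_n$. As these three components commute, the quasi-state property yields $\zeta(X) = \zeta(X_e) + \zeta(X_h) + \zeta(X_n)$, so it suffices to show that $\zeta$ vanishes on each type separately. For a nilpotent $N$, the Jacobson--Morozov $\mathfrak{sl}_2$-triple provides a semisimple $H$ with $[H,N] = 2N$, so that $\Ad(\exp(sH))N = e^{2s}N$; ${\rm Ad}$-invariance gives $\zeta(N) = \zeta(e^{2s}N)$, while homogeneity along the line $\R N$ gives $\zeta(e^{2s}N) = e^{2s}\zeta(N)$. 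Taking $s \neq 0$ forces $\zeta(N) = 0$.

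For the hyperbolic part, every hyperbolic element is conjugate into a fixed maximal $\R$-split abelian subalgebra $\mathfrak{a}$, on which $\zeta$ restricts to a linear functional (since $\mathfrak{a}$ is abelian) that is invariant under the restricted Weyl group $W$ realized by conjugation by elements of $K$. As $\mathfrak{s}$ is simple its restricted root system is irreducible and spans $\mathfrak{a}^*$, so the only $W$-fixed covector is $0$ and hence $\zeta|_{\mathfrak{a}} = 0$. For the elliptic part, every elliptic element is conjugate into the maximal compact subalgebra $\mathfrak{k}$, and $\zeta|_{\mathfrak{k}}$ is an ${\rm Ad}(K)$-invariant Lie quasi-state on the compact Lie algebra $\mathfrak{k}$. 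By \cite[Theorem 4.1]{EP09} it is linear, and as an invariant linear functional it vanishes on $[\mathfrak{k},\mathfrak{k}]$. Here the non-Hermitian hypothesis is decisive: it means precisely that $\mathfrak{z}(\mathfrak{k}) = 0$, i.e. $\mathfrak{k}$ is semisimple, so $[\mathfrak{k},\mathfrak{k}] = \mathfrak{k}$ and $\zeta|_{\mathfrak{k}} = 0$.

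Combining the three cases gives $\zeta \equiv 0$ on $\mathfrak{s}$. Crucially, none of the arguments used continuity of $\zeta$, so in fact $Q_{\rm Ad}(\mathfrak{s}) = \{0\} = \mathcal Q_{\rm Ad}(\mathfrak{s}) = \mathcal Q_{\rm qm}(\mathfrak{s})$, which also yields the automatic continuity statement. Reassembling over the factors, a general ${\rm Ad}$-invariant quasi-state on $\gfr$ is the sum of a homomorphism on $\mathfrak{z}$ together with the directional derivatives of the homogeneous quasimorphisms supplied by \cite{EP09} on the Hermitian and compact factors, hence lies in $\mathcal Q_{\rm qm}(\gfr)$, proving $Q_{\rm Ad}(\gfr) = \mathcal Q_{\rm Ad}(\gfr) = \mathcal Q_{\rm qm}(\gfr)$. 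I expect the main obstacle to be the elliptic step: one must reduce the compact directions to the maximal compact subalgebra and invoke the Gleason-type rigidity of \cite[Theorem 4.1]{EP09}, and it is exactly here --- through the vanishing of $\mathfrak{z}(\mathfrak{k})$ --- that the non-Hermitian hypothesis enters. Organizing arbitrary elements into their commuting elliptic, hyperbolic, and nilpotent parts, so that these three special cases genuinely suffice, is the other point requiring care.
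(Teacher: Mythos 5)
Your proposal is correct and follows essentially the same route as the paper: reduction to simple factors, the additive Jordan decomposition into commuting elliptic, hyperbolic and nilpotent parts, the Jacobson--Morozov scaling trick for nilpotents, Weyl-group invariance on $\mathfrak{a}$, and compact-algebra rigidity for the elliptic part. The only organizational difference is that you dispose of the Hermitian simple factors entirely by citing \cite[Theorem 4.2]{EP09} and quote \cite[Theorem 4.1]{EP09} for the elliptic step, whereas the paper runs the Jordan-decomposition argument uniformly for all simple algebras (splitting off the central elliptic part $Y_c$, which yields the explicit formula $\zeta(X)=\alpha(Y_c)$) and re-derives the compact case via the maximal-torus/Weyl-group argument.
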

This settles the quasi-state rigidity problem in the reductive setting completely. However, we stress that the classification of 
\textit{non}-Ad-invariant quasi-states on reductive Lie algebras 
remains open. There are some important partial results towards such a classification. For example, if $n \geq 3$ and $\gfr$ denotes either the Lie algebra $\gfr = \L u(n)$ or the Lie algebra $\gfr = \L {sp}(2n)$, then 
\begin{equation}\label{EPGl}
{\mathcal Q}(\L g) ={\mathcal Q}_{\rm qm}(\L g)+\gfr^*,
\end{equation}
where in the $\L u(n)$ case the first summand on the right hand side is actually trivial. For $\gfr = \L u(n)$ this follows from a classical theorem of Gleason (\cite{Gl57}, see also the introduction of 
\cite{EP09}) and for $\gfr = \L {sp}(2n)$ this is one of the main results of \cite{EP09}. Currently we do not know whether \eqref{EPGl} can be extended to more general compact Lie algebras, let alone reductive Lie algebras. If such a general result exists, then it has to evoke some kind of higher rank assumption, since \eqref{EPGl} fails for $\L{sp}(2)$ and $\L u(2)$.

Let us mention in passing that there is also a global version of Theorem \ref{MainTheorem1} which can be stated as follows (see Theorem \ref{GlobalThm} below):
\begin{theorem} Let $G$ be a connected reductive Lie group. Then every conjugation-invariant integrated Lie quasi-state on $G$ is a homogeneous quasimorphism.
\end{theorem}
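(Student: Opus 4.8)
The plan is to deduce the global statement from the local rigidity result (Theorem \ref{MainTheorem1}) by passing to the universal cover and analysing the difference between $f$ and a quasimorphism that realises its directional derivative. Let $f \colon G \to \R$ be a conjugation-invariant integrated Lie quasi-state and let $p \colon \widetilde G \to G$ be the universal covering, so that $\widetilde f := f \circ p$ is again a conjugation-invariant integrated Lie quasi-state with the same directional derivative $\zeta(X) = \widetilde f(\exp_{\widetilde G} X) = f(\exp_G X)$. Since $\exp_G(\Ad(g)X) = g\,\exp_G(X)\,g^{-1}$, conjugation-invariance of $f$ shows that $\zeta$ is \emph{Ad}-invariant, i.e. $\zeta \in Q_{\rm Ad}(\gfr)$. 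By Theorem \ref{MainTheorem1} we have $Q_{\rm Ad}(\gfr) = \mathcal Q_{\rm qm}(\gfr)$, so $\zeta$ is the directional derivative of a homogeneous quasimorphism on some connected Lie group $G'$ with Lie algebra $\gfr$; pulling this quasimorphism back along the covering $\widetilde G \to G'$ produces a homogeneous quasimorphism $\psi \colon \widetilde G \to \R$ whose directional derivative is again $\zeta$.

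Next I would form the difference $\eta := \widetilde f - \psi$. As recalled in the introduction, $\psi$ is itself a conjugation-invariant integrated Lie quasi-state, so $\eta$ is one as well, and by construction its directional derivative vanishes, i.e. $\eta(\exp_{\widetilde G} X) = 0$ for all $X \in \gfr$. The goal is now to prove $\eta \equiv 0$. Writing $\gfr = \mathfrak z \oplus \mathfrak s$ with $\mathfrak z$ the centre and $\mathfrak s = [\gfr,\gfr]$ semisimple, the simply connected group splits as $\widetilde G \cong (\mathfrak z,+) \times S$ with $S$ simply connected semisimple. For $(z,s) \in (\mathfrak z,+)\times S$ the elements $(z,e)$ and $(e,s)$ commute, so $\eta(z,s) = \eta(z,e) + \eta(e,s)$; here the first term vanishes because $\exp_{\widetilde G}$ is onto $(\mathfrak z,+)$, and it remains to treat $\eta$ on the semisimple factor $S$.

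This last point is the heart of the argument. I would invoke the complete multiplicative Jordan decomposition on the connected semisimple group $S$: every $s \in S$ factors as $s = s_{\rm e}\, s_{\rm h}\, s_{\rm u}$ into pairwise commuting elliptic, hyperbolic and unipotent parts. The hyperbolic part is the exponential of a hyperbolic element and the unipotent part the exponential of a nilpotent element, while the elliptic part, being conjugate into the exponential of a compactly embedded Cartan subalgebra, likewise lies in the image of $\exp_S$. Since $\eta$ is additive on this pairwise commuting triple and vanishes on $\exp_S(\mathfrak s)$, we obtain $\eta(s) = \eta(s_{\rm e}) + \eta(s_{\rm h}) + \eta(s_{\rm u}) = 0$. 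Hence $\eta \equiv 0$, so $\widetilde f = \psi$ is a homogeneous quasimorphism on $\widetilde G$. As $\widetilde f = f\circ p$ is invariant under $\ker p$, the quasimorphism $\psi$ descends to $G$ and there equals $f$, and continuity together with finiteness of the defect is inherited through the covering; thus $f$ is a homogeneous quasimorphism. The main obstacle is precisely the Jordan-decomposition step: one needs that \emph{each} Jordan component of an element of a connected semisimple Lie group lies in the image of the exponential map, and it is exactly this fact that collapses the global statement onto the purely local Theorem \ref{MainTheorem1}.
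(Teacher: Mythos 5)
Your proposal is correct in outline and follows the same overall strategy as the paper: lift to the universal cover, subtract a homogeneous quasimorphism realising the Ad-invariant directional derivative (which exists by Theorem \ref{MainTheorem1}), and kill the remaining integrated Lie quasi-state $\eta$ by writing a general element as a product of pairwise commuting pieces that all lie in the image of the exponential map. The one substantive difference is where the multiplicative Jordan decomposition is invoked. You apply the complete multiplicative Jordan decomposition $s = s_{\rm e}s_{\rm h}s_{\rm u}$ directly on the simply connected semisimple factor $S$, which may be a non-linear group with infinite centre (e.g. the universal cover of ${\rm SL}_2(\R)$); the existence of this decomposition, and the fact that the elliptic part is an exponential, are true there but not standard, and this is exactly the point you yourself flag as the ``main obstacle''. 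The paper sidesteps it: using the Iwasawa decomposition and Theorem \ref{MainTheorem1Ext} it first arranges that $f_0 = f - \phi$ vanishes on $K$, $A$ and $N$ (each connected with surjective restricted exponential), in particular on $Z(G) \subset K$, so that $f_0$ descends to the adjoint group $G_0 = G/Z(G)$ --- which \emph{is} algebraic and hence admits the multiplicative Jordan decomposition with components conjugate into $K_0$, $A_0$, $N_0$. Your route buys a slightly more direct argument at the cost of needing the Jordan decomposition (and surjectivity of $\exp$ onto the elliptic part) on a possibly non-algebraic covering group; if you keep your version, you should either supply a proof of that fact for infinite-centre groups (lifting the hyperbolic and unipotent parts canonically via their unique logarithms and checking the remaining factor is conjugate into the maximal compactly embedded subgroup, onto which $\exp|_{\kfr}$ surjects), or imitate the paper and descend to ${\rm Ad}(G)$ after first establishing vanishing on the centre.
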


The rigidity problem for solvable Lie algebras is more subtle than in the reductive case. The smallest example of a non-rigid solvable Lie algebra is provided by the three-dimensional Heisenberg algebra $\L h_3$, which can be characterized as the unique non-rigid Lie algebra of dimension $\leq 3$ (see Theorem \ref{ThmLow}). We actually show that $\mathcal Q_{Ad}(\L h_3)$ is infinite-dimensional by providing an explicit parametrization of all Ad-invariant continuous Lie quasi-states on $\L h_3$ (see Corollary \ref{CorH3}). It then follows from general principles that $\mathcal Q_{Ad}(\L g)$ is also infinite-dimensional for every Lie algebra $\gfr$ which surjects onto $\L h_3$. In particular, such a Lie algebra can never be rigid. One can ask whether this is the only obstruction to rigidity, at least for solvable Lie algebras:

\begin{question}\label{Question} Let $\gfr$ be a solvable Lie algebra which does not surject onto $\L h_3$. Does it follow that $\gfr$ is rigid?
\end{question}

This question is motivated by the following partial results. Consider the class of Lie algebras $\gfr$ which are almost abelian in the in the sense that $\gfr$ admits a codimension one abelian ideal $V$ such that the extension
\[
0 \to V \to \gfr \to \R \to 0
\]
splits as a semidirect product. Such Lie algebras are automatically (two-step) solvable, but not necessarily nilpotent. Examples include the Lie algebras of the $(ax+b)$-group, of the three-dimensional SOL group and of the $1$-dimensional unitary motion group $\C \rtimes U(1)$, as well as the Lie algebra $\L h_3$. For Lie algebras in this class, the answer to Question \ref{Question} is positive:
\begin{theorem}\label{MainTheorem2}
Let $\gfr$ be a Lie algebra which splits over a codimension one abelian ideal. Then $\gfr$ is rigid if and only if it does not surject onto the three-dimensional Heisenberg algebra $\L h_3$.
\end{theorem}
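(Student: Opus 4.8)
The plan is to exploit the semidirect structure directly. Write $\gfr = V \rtimes_A \R$, where $V$ is the given abelian ideal, $T$ spans a complement, and $A \deq \ad(T)|_V \in \End(V)$. Since $V$ is abelian we have $[\gfr,\gfr] = \im A \subseteq V$, and the (identity component of the) adjoint group is generated by the maps $\Ad(\exp tT)$, which act on $\gfr$ as $e^{tA}$ on $V$ and fix $T$, together with the maps $\Ad(\exp v)$ for $v \in V$, which fix $V$ pointwise and send $T \mapsto T - Av$. The whole argument turns on the single linear-algebraic dichotomy $\ker A \cap \im A = 0$ versus $\ker A \cap \im A \ne 0$ (equivalently, whether $A$ has a Jordan block of size $\ge 2$ for the eigenvalue $0$), and the first claim is that this is precisely the distinction in the statement: $\gfr$ surjects onto $\L h_3$ if and only if $\ker A \cap \im A \ne 0$.

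For the direction rigid $\Rightarrow$ does not surject, I simply invoke the general principle quoted above: if $\gfr$ surjects onto $\L h_3$ then $\mathcal Q_{\rm Ad}(\gfr)$ is infinite-dimensional, hence $\gfr$ is not rigid. It therefore remains to show that $\ker A \cap \im A \ne 0$ forces a surjection onto $\L h_3$, and that $\ker A \cap \im A = 0$ forces rigidity. For the first, pick $0 \ne u \in \ker A \cap \im A$ and write $u = A e_1$; a short computation places $e_1$ in the generalized $0$-eigenspace $V_0 = \ker A^{\dim V}$, so $u$ lies at the bottom of a Jordan chain $f_1 \mapsto \cdots \mapsto f_k \mapsto 0$ of $A|_{V_0}$ with $k \ge 2$. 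Let $I$ be the sum of all other primary components of $A$ together with $\langle f_3,\dots,f_k\rangle$; this is an $A$-invariant subspace of $V$ of codimension two, hence an ideal of $\gfr$, and the induced operator $\bar A$ on $V/I$ satisfies $\bar A\,\bar f_1 = \bar f_2 \ne 0$ and $\bar A\,\bar f_2 = \overline{f_3} = 0$. Thus $\gfr/I = (V/I)\rtimes_{\bar A}\R \cong \L h_3$.

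Now assume $\ker A \cap \im A = 0$, equivalently $V = \ker A \oplus \im A$, and let $\zeta \in \mathcal Q_{\rm Ad}(\gfr)$. Put $\phi \deq \zeta|_V$ and $c \deq \zeta(T)$. Since $V$ is abelian, $\phi$ is linear; Ad-invariance under $e^{tA}$ gives $\phi(e^{tA}w) = \phi(w)$, so differentiating at $t=0$ yields $\phi \circ A = 0$, i.e.\ $\phi$ vanishes on $\im A = [\gfr,\gfr]$. Next, $\ker A \oplus \R T$ is an abelian subalgebra (as $\ker A$ is exactly the $T$-centralizer in $V$), so $\zeta$ is linear there and $\zeta(w_0 + tT) = \phi(w_0) + tc$ for all $w_0 \in \ker A$. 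Finally, for arbitrary $w \in V$ and $t \ne 0$, decompose $w = w_0 + w_1$ with $w_0 \in \ker A$, $w_1 \in \im A$, and choose $v$ with $Av = w_1/t$; then $\Ad(\exp v)$ carries $w + tT$ to $w_0 + tT$, so Ad-invariance gives $\zeta(w+tT) = \phi(w_0) + tc = \phi(w) + tc$ using $\phi(w_1)=0$. Hence $\zeta$ is the linear functional $w + tT \mapsto \phi(w) + tc$, which vanishes on $[\gfr,\gfr]$ and is therefore a homomorphism $\gfr \to \R$. Thus $\mathcal Q_{\rm Ad}(\gfr) = \Hom(\gfr,\R)$; since $\gfr$ is solvable it contains no simple Hermitian subalgebra, so $\Hom(\gfr,\R) = \mathcal Q_{\rm qm}(\gfr)$ and $\gfr$ is rigid.

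The rigidity half is essentially formal once the orbit computation above is in place, and it uses continuity only insofar as this is subsumed by the linearity it establishes. The main point demanding care is thus the equivalence $\gfr \twoheadrightarrow \L h_3 \iff \ker A \cap \im A \ne 0$: in the construction of the codimension-two invariant ideal $I$ one must retain the top two vectors of a possibly over-long Jordan chain and push the remaining lower vectors $f_3,\dots,f_k$ into $I$ (retaining the bottom two would fail to give an $A$-invariant $I$), so that the induced operator on $V/I$ is exactly the rank-one Heisenberg nilpotent. The converse implication, which is not strictly needed for the theorem, would follow by observing that any surjection $\gfr \to \L h_3$ intertwines $A$ with the nilpotent operator on the abelian ideal $\langle Y,Z\rangle$ of $\L h_3$, exhibiting a size-$2$ nilpotent block as a quotient operator of $A$ and hence forcing a Jordan block of size $\ge 2$ at $0$.
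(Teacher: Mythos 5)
Your argument for the rigidity half is correct and takes a genuinely different route from the paper. The paper first classifies \emph{all} Lie quasi-states on $\gfr_\phi$ (Theorem \ref{SLTheorem}: they are parametrized by a functional $\alpha\in V^*$ and a function $c$ on $\im\phi$), then characterizes Ad-invariance (Theorem \ref{SolvableAdInv}: $\alpha$ kills $\im\phi$ and $c$ descends to $\im\phi/\im\phi^2$), and reads off rigidity from whether $\im\phi^2=\im\phi$ --- which is exactly your condition $\ker A\cap\im A=0$. You bypass the classification entirely: linearity on the abelian subalgebras $V$ and $\ker A\oplus\R T$, invariance under $e^{t\,\ad T}$ (giving $\zeta|_V\circ A=0$), and the $\Ad(\exp v)$-orbit computation moving $w+tT$ to $w_0+tT$ pin down $\zeta$ completely. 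This is shorter, and it buys something the paper's route does not: you never use continuity of $\zeta$, so in the rigid case you conclude that \emph{every} Ad-invariant quasi-state is a homomorphism, whereas the paper's derivation of the constraint $\im\phi\subset\ker\alpha$ uses sublinearity of $c$ and hence continuity. Your surjection construction is also sound modulo two small imprecisions: the preimage $e_1$ of $u$ is not forced into the generalized $0$-eigenspace but can be \emph{chosen} there (correct it using the primary decomposition, since $A$ is invertible on the other primary components); and the ideal $I$ must contain all Jordan blocks of $A$ other than the one through $u$, including the remaining nilpotent blocks inside the generalized $0$-eigenspace, not only the ``other primary components''. With that reading $I$ is $A$-invariant of codimension two and the quotient is $\L h_3$, in place of the paper's chain $\gfr_\phi\to\gfr_{\phi_n}\to\cdots\to\gfr_{\phi_2}=\L h_3$ obtained by repeatedly dividing out the center.

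The one genuine incompleteness is the other half of the equivalence. You reduce ``surjects onto $\L h_3$ $\Rightarrow$ not rigid'' to the assertion that $\mathcal Q_{\rm Ad}(\L h_3)$ is infinite-dimensional and cite this as a known principle. That assertion is not formal: one must actually exhibit nonlinear continuous Ad-invariant Lie quasi-states on $\L h_3$, and this is precisely what the paper's classification machinery delivers (the quasi-states $\zeta_{0,c}(xX+yY+zZ)=x\cdot c(y/x)$ for sublinear continuous $c$, whose quasi-state property requires the case analysis in Theorem \ref{SLTheorem} and whose Ad-invariance requires the $\exp(\ad(w,s))$ computation in Theorem \ref{SolvableAdInv}). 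Since your streamlined rigidity argument deliberately avoids that machinery, nothing in your proposal produces these examples, so as written one direction of the theorem rests on an unproved (though true) input. Writing down this explicit family on $\L h_3$ and verifying the two defining conditions directly would close the gap without reintroducing the full classification.
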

Given the special role of the three-dimensional Heisenberg algebra in this theorem, it is natural to ask about rigidity of the higher-dimensional Heisenberg algebras. Interestingly enough, it was already established in \cite[Prop. 2.13]{EP09} that these algebras are always rigid. In fact, on these algebras every Lie quasi-state (neither assumed Ad-invariant nor even continuous) is linear. Thus in order to find potential counterexamples to Question \ref{Question} one has to look elsewhere (and to go beyond dimension $3$), and it it currently not clear to us, what would be a natural class of solvable Lie algebras to consider.

The example of the Heisenberg algebra should also serve as a warning concerning the study of the rigidity problem for general (i.e. neither solvable nor reductive) Lie algebras. Here one is immediately confronted with the following problem:
\begin{question}\label{Question2}
Let $\L g_1$ be a rigid solvable Lie algebra and $\L g_2$ be a semisimple (hence rigid) Lie algebra acting on $\L g_1$ by automorphisms. Is the semi-direct product $\L g_1 \rtimes \L g_2$ rigid?
\end{question}
If the assumption semisimple is replaced by reductive the answer to this question is no, since $\L h_3 = \R^2 \rtimes \R$ is even a semi-direct product of abelian subalgebras. For the moment we have no systematic way to deal with Question \ref{Question2}.
However, we do understand some specific, but important, examples:

\begin{theorem}\label{MainTheorem3} Let $\gfr_n$ be the $n$-dimensional unitary motion algebra, i.e. the semidirect product of the Lie algebras $\L{u}(n)$ and $\C^n$ with respect to the standard representation of $\L{u}(n)$ on $\C^n$. Then the following hold:
\begin{enumerate}[(i)]
\item $\gfr_n$ is rigid for all $n \geq 1$.
\item If $n \geq 3$, then every every continuous quasi-state on $\gfr$ is linear.
\item If $n \leq 2$, then the space of continuous quasi-states on $\gfr_n$ is infinite-dimensional.
\end{enumerate}
\end{theorem}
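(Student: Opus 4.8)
The plan is to handle the three parts separately, in each case reducing to what is known about $\L u(n)$ itself: Theorem~\ref{MainTheorem1} for part (i), and Gleason's theorem $\mathcal Q(\L u(n))=\L u(n)^*$ for $n\ge 3$ (the case of \eqref{EPGl} where the first summand is trivial) for part (ii). Throughout I write an element of $\gfr_n=\C^n\rtimes\L u(n)$ as a pair $(v,A)$ with $v\in\C^n$, $A\in\L u(n)$, so that $[(v,A),(w,B)]=(Aw-Bv,[A,B])$ and $\Ad(\exp u)(v,A)=(v-Au,A)$ for $u\in\C^n$. Two elementary facts drive everything: a vector $(w,0)$ commutes with $(v,A)$ if and only if $w\in\ker A$; and since $\C^n$ and $\L u(n)$ are subalgebras, any quasi-state restricts to a linear functional on $\C^n$ (it is abelian) and to a quasi-state on $\L u(n)$.

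For part (i), let $\zeta$ be Ad-invariant and continuous. Its restriction $\ell:=\zeta|_{\C^n}$ is a linear functional invariant under the standard action of $U(n)$; since the inner automorphism $\exp(\ad(0,i\pi\cdot\mathrm{id}))$ acts on $\C^n$ as $-\mathrm{id}$, invariance and linearity force $\ell=0$. Using Ad-invariance to replace $v$ by its $\ker A$-component and then splitting the commuting pair $(P_{\ker A}v,0)$, $(0,A)$, I get $\zeta(v,A)=\zeta(0,A)$, so $\zeta$ is pulled back from an Ad-invariant continuous quasi-state on $\L u(n)$. By Theorem~\ref{MainTheorem1} the latter is linear (a multiple of the central trace functional), and since $\gfr_n$ has no simple Hermitian factor this linear functional lies in $\mathcal Q_{\mathrm{qm}}(\gfr_n)$; rigidity follows.

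For part (ii) ($n\ge 3$) I drop Ad-invariance. Now $\ell:=\zeta|_{\C^n}$ and $\lambda:=\zeta|_{\L u(n)}$ are both linear, the latter by Gleason's theorem, so $\eta:=\zeta-\ell\circ\pr_{\C^n}-\lambda\circ\pr_{\L u(n)}$ is a continuous quasi-state vanishing on both $\C^n$ and $\L u(n)$, and it suffices to show $\eta\equiv 0$. Commuting $(v,A)$ with kernel vectors gives $\eta(v,A)=\eta(P_{\im A}v,A)$, so I may assume $v\in\im A$. Spectrally decomposing $A=\sum_k i\theta_k P_k$ into eigenprojections, the elements $(P_kv,i\theta_kP_k)$ pairwise commute, whence $\eta(v,A)=\sum_{\theta_k\neq 0}\eta(P_kv,i\theta_kP_k)$; this reduces the problem to scalar blocks $A=isP$ with $P$ an orthogonal projection and $v\in\im P$.

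The crux is this scalar-block case, which I expect to be the main obstacle. For $P$ of rank $\ge 2$, set $G_s(z):=\eta(z,isP)$ on $\im P$. For any orthogonal splitting $P=Q_1+Q_2$ into subprojections, the elements $(v_1,isQ_1)$ and $(v_2,isQ_2)$ with $v_j\in\im Q_j$ commute, and together with the identity $\eta(v_j,isP)=\eta(v_j,isQ_j)$ (obtained by commuting $(v_j,isQ_j)$ with $(0,is(P-Q_j))$ and using $\eta|_{\L u(n)}=0$) this yields orthogonal additivity $G_s(v_1+v_2)=G_s(v_1)+G_s(v_2)$ for $v_1\perp v_2$. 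By the classification of continuous orthogonally additive functions on a Euclidean space of dimension $\ge 2$, $G_s(z)=\langle b_s,z\rangle+c_s|z|^2$. Homogeneity of $\eta$ gives $G_{ts}(tz)=tG_s(z)$, forcing $b_s\equiv b$ and $c_s=c_1/s$; finally, continuity of $\eta$ as $s\to 0$ (where $(z,isP)\to(z,0)$ and $\eta(z,0)=0$) forces $c_1=0$ and then $b=0$, so $G_s\equiv 0$. Rank-one projections are then handled by embedding $\im P$ into a rank-two scalar block (possible since $n\ge 2$) and restricting. Assembling the spectral sum gives $\eta\equiv 0$, hence $\zeta$ is linear. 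The delicate points are setting up orthogonal additivity uniformly over all orthonormal frames of the block and correctly invoking the orthogonal-additivity structure theorem together with the homogeneity/continuity bootstrap.

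For part (iii) I exploit the failure of Gleason at $n\le 2$ directly. For $n=2$, the projections $\gfr_2\twoheadrightarrow\L u(2)\twoheadrightarrow\L u(2)/\L z\cong\L{su}(2)\cong\R^3$ let me pull back quasi-states; since the only abelian subalgebras of $\L{su}(2)$ (with the cross-product bracket) are lines, every continuous degree-one homogeneous function on $\R^3$ is a quasi-state, an infinite-dimensional family, and pullback along the surjection keeps them distinct and continuous. For $n=1$, the algebra $\gfr_1\cong\R^2\rtimes\R$ has the plane $\R^2$ as its unique two-dimensional abelian subalgebra, so a continuous quasi-state is precisely a continuous degree-one homogeneous function that is additive on this plane; writing $\zeta(v+tZ)=t\,h(v/t)$ for $t\neq 0$ produces an infinite-dimensional family as $h$ ranges over continuous functions with the appropriate linear growth at infinity.
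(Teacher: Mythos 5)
Your parts (i) and (iii) are correct. Your argument for (i) is in fact more direct than the paper's: by using the translations $\Ad(\exp u)(v,A)=(v-Au,A)$ to move $v$ into $\ker A$ and then splitting off the commuting summand $(P_{\ker A}v,0)$, you reduce an Ad-invariant quasi-state to one pulled back from $\L u(n)$ without ever needing to know that \emph{all} normalized continuous quasi-states vanish; the paper instead deduces rigidity from that stronger statement (its Corollary \ref{NCLQSTrivial}). Part (iii) matches the paper's reasoning: $n=1$ via the solvable classification, $n=2$ via the infinite-dimensionality of $\mathcal Q(\L u(2))$, which your pullback from $\L{su}(2)\cong(\R^3,\times)$ makes explicit.

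The gap is in part (ii), precisely at the step you flagged as the crux. The additivity you establish for $G_s(z)=\eta(z,isP)$ holds only for pairs $v_1,v_2$ lying in images of complementary \emph{complex-linear} subprojections $Q_1,Q_2$ of $P$, i.e.\ for pairs orthogonal with respect to the Hermitian inner product. The structure theorem you invoke (continuous orthogonally additive functions on a real inner-product space of dimension $\geq 2$ have the form $\langle b,z\rangle+c|z|^2$; Sundaresan, R\"atz) assumes additivity on all \emph{real}-orthogonal pairs, a strictly stronger hypothesis: the vectors $v$ and $iv$ are real-orthogonal but lie in the same complex line, cannot be separated by subprojections, and indeed $(v,isP)$ and $(iv,isP)$ do not commute. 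So the cited classification does not apply as stated, and its complex-orthogonal analogue is not obviously true (for rank-two blocks one is in the $\C^2$ regime where Gleason-type classifications are known to fail). The repair is cheap: since $\eta$ is normalized and continuous, Lemma \ref{Sublinearity} shows that each $G_s$ is \emph{sublinear}, and a sublinear function on $\C^k$, $k\geq 2$, that is additive on complex-orthogonal pairs and vanishes at $0$ must vanish identically --- this is exactly Theorem \ref{FFTrivial} in the appendix, proved by a doubling argument (the odd part satisfies $f(u)=2f(u/2)$, the even part is weakly radial and satisfies $f(u)=4f(u/2)$, and sublinearity kills both). With that substitution your homogeneity-and-continuity bootstrap in $s$ becomes unnecessary, and the rest of your block-by-block reduction goes through; the paper packages the same reduction through the single generalized frame function $f_\zeta(w)=\zeta(-iw,i\cdot\mathbf{1})$ and a determinacy formula, but that difference is cosmetic.
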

The proof of Theorem \ref{MainTheorem3} is close in spirit to the proofs 
of Gleason and of Entov-Polterovich of the rigidity of the Lie algebras 
$\L u(n)$ and $\L{sp}(n)$ respectively in that it depends on the analysis of
the values of a given Lie quasi-state at certain elements related to rank one projections. Extending Theorem \ref{MainTheorem3} even to the Euclidean motion algebras $\R^n \rtimes \L{o}(n)$ would require a deeper
understanding of (non-Ad-invariant) Lie quasi-states on $\L{o}(n)$.

\subsection{Organization of the paper}
The paper consists of four main sections and an appendix. In Section \ref{sec:reductive} we discuss additive Jordan decompositions 
of simple Lie algebras and we show how these decompositions can be used 
to prove Theorem \ref{MainTheorem1}. In Section \ref{sec:semidirect} we provide an explicit formula for Lie quasi-states on solvable Lie algebras which split over an abelian ideal of 
codimension $1$. We then use this formula to derive Theorem \ref{MainTheorem2}. 
In Section \ref{sec:unitary} we extend some arguments of Gleason and
Entov-Polterovich to the setting of unitary motion Lie algebras and 
establish Theorem \ref{MainTheorem3}. In the  appendix we study a class of generalized frame functions which appear in the proof of Theorem \ref{MainTheorem3}

\subsection{Acknowledgments}
The authors would like to express their gratitude to Michael Entov
for bringing the problem of classifying Lie quasi-states on finite-dimensional 
Lie algebras to their attention. They are also indebted to Leonid Polterovich for comments on an earlier version of this paper and in particular for challenging them to extend Theorem \ref{MainTheorem3} to the present form. The authors are grateful to the anonymous referee for a careful reading of the manuscript which lead to several improvements and corrections. In particular, the referee pointed out a crucial mistake in one of the computations in an earlier version of this paper, and thereby saved us from publishing a wrong classification statement. The present paper is an outgrowth of discussions between the authors which took place at Chalmers University of Technology in Gothenburg in September 2014. The second author would 
like to thank the Guest Research Program of Chalmers and the Department of Mathematical Sciences at Chalmers for their hospitality. 

\section{The reductive case}
\label{sec:reductive}
Recall that a Lie algebra $\gfr$ is \emph{reductive} if it decomposes as a direct sum of abelian and simple Lie algebras. The study of (Ad-invariant) Lie quasi-states on such Lie algebras can be immediately reduced to the case of simple Lie algebras by means of the following observation:
\begin{lemma}\label{Trivialities} If $\L g = \L g_1 \oplus \L g_2$ is a direct sum of Lie algebras, then $Q(g) $ decomposes as $ Q(\gfr) =Q(\gfr_1) \oplus Q(\gfr_2)$, and the subspaces ${Q}_{\rm qm}(\L g)$, ${Q}_{\rm Ad}(\L g)$, ${Q}(\L g)$, ${\mathcal Q}_{\rm qm}(\L g)$, ${\mathcal Q}_{\rm Ad}(\L g)$, ${\mathcal Q}(\L g)$ decompose accordingly. If $\gfr$ is abelian then all these spaces coincide with $\gfr^*$.
\end{lemma}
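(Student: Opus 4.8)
The plan is to prove the decomposition in two stages: first establish the additive decomposition $Q(\gfr) = Q(\gfr_1) \oplus Q(\gfr_2)$, then argue that each of the named subspaces respects this decomposition, and finally treat the abelian case. For the main decomposition, the key observation is that a direct sum $\gfr = \gfr_1 \oplus \gfr_2$ carries the inclusions $\iota_j \colon \gfr_j \into \gfr$ and the projections $\pr_j \colon \gfr \to \gfr_j$, where crucially every element of $\iota_1(\gfr_1)$ commutes with every element of $\iota_2(\gfr_2)$ since $[\gfr_1, \gfr_2] = 0$ in a direct sum. Given $\zeta \in Q(\gfr)$, I would define $\zeta_j \deq \zeta \circ \iota_j \in Q(\gfr_j)$; conversely, given $\zeta_j \in Q(\gfr_j)$, set $\zeta \deq \zeta_1 \circ \pr_1 + \zeta_2 \circ \pr_2$. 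The heart of the argument is to check that for arbitrary $X = X_1 + X_2 \in \gfr$ (with $X_j \in \gfr_j$) one has $\zeta(X) = \zeta(X_1) + \zeta(X_2)$: this is immediate from the quasi-state axiom applied to the commuting pair $(X_1, X_2)$. This shows the two constructions are mutually inverse and establishes $Q(\gfr) \cong Q(\gfr_1) \oplus Q(\gfr_2)$ as vector spaces.

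Next I would verify that each named subspace is compatible with this splitting. For continuity this is clear, since $\zeta$ is continuous iff both $\zeta_1$ and $\zeta_2$ are, using that $\pr_j$ and $\iota_j$ are continuous linear maps; this handles the passage from $Q$ to $\mathcal Q$. For Ad-invariance, the point is that the adjoint action of the adjoint group of $\gfr = \gfr_1 \oplus \gfr_2$ splits as a product action preserving each summand (the adjoint group is a product $G_1 \times G_2$ and $\Ad$ acts block-diagonally), so $\zeta$ is Ad-invariant iff each $\zeta_j$ is $\Ad$-invariant on $\gfr_j$. For the quasimorphism subspace $Q_{\rm qm}$, I would use that one may choose the integrating group $G$ to be a product $G_1 \times G_2$ with $\Lie(G_j) = \gfr_j$, and that a homogeneous quasimorphism on a direct product restricts to a homogeneous quasimorphism on each factor, while conversely a sum of homogeneous quasimorphisms pulled back along the two projections $G \to G_j$ is again a homogeneous quasimorphism. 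The integrability subspace $Q_{\rm int}$ is handled similarly by integrating componentwise.

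I expect the main (though still routine) obstacle to be bookkeeping in the quasimorphism case, namely verifying that the defect $D(f)$ of a sum $f = f_1 \circ \pi_1 + f_2 \circ \pi_2$ remains finite and that homogeneity is preserved under both restriction and summation; this requires invoking the product structure of $G$ and the fact that the two factors commute, so that the cross terms in the quasimorphism inequality vanish. One must also confirm that restricting a homogeneous quasimorphism on $G_1 \times G_2$ to the subgroup $G_1 \times \{e\}$ yields a homogeneous quasimorphism whose directional derivative is the component $\zeta_1$, which follows from the definition of the directional derivative together with $\exp_G(X_1) = (\exp_{G_1}(X_1), e)$.

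Finally, the abelian case is immediate: if $\gfr$ is abelian then every pair of elements commutes, so the quasi-state axiom forces $\zeta$ to be additive and homogeneous on all of $\gfr$, i.e.\ linear, whence $Q(\gfr) = \gfr^*$. Since linear functionals are automatically continuous, Ad-invariant (as $\Ad$ is trivial on an abelian algebra), and are homomorphisms to $\R$, hence lie in $Q_{\rm qm}(\gfr)$, all the listed subspaces collapse to $\gfr^*$.
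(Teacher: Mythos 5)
The paper states this lemma without any proof, treating it as an immediate observation, so there is nothing to compare against; your write-up simply supplies the routine verification, and it is correct. The core points are all in place: commutation in $\gfr_1\oplus\gfr_2$ is equivalent to componentwise commutation, so $\zeta\mapsto(\zeta\circ\iota_1,\zeta\circ\iota_2)$ and $(\zeta_1,\zeta_2)\mapsto\zeta_1\circ\pr_1+\zeta_2\circ\pr_2$ are mutually inverse via the identity $\zeta(X)=\zeta(X_1)+\zeta(X_2)$ for the commuting pair $(X_1,X_2)$; continuity and ${\rm Ad}$-invariance are preserved because $\pr_j$, $\iota_j$ are continuous and the adjoint group acts block-diagonally; and the abelian case is forced by the quasi-state axiom. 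The only step deserving slightly more care is the $Q_{\rm qm}$ case in the direction where $\zeta$ is given as the directional derivative of a homogeneous quasimorphism $f$ on \emph{some} group $G$ with Lie algebra $\gfr$: such a $G$ need not be a product, so you should either pass to the universal cover (which \emph{is} a product $\widetilde{G_1}\times\widetilde{G_2}$, and $f$ pulls back to a homogeneous quasimorphism with the same directional derivative) or restrict $f$ to the analytic subgroups with Lie algebras $\gfr_j$, on which the restriction is again a homogeneous quasimorphism integrating $\zeta_j$. This is a one-line fix, not a gap.
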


\subsection{Additive Jordan decompositions of simple Lie algebras}

Thus in order to establish Theorem \ref{MainTheorem1} it suffices to consider the case of a finite-dimensional simple real Lie algebra $\gfr$. Such Lie algebras come in two different flavors, \emph{Hermitian} and \emph{non-Hermitian}. More precisely, let $\theta$ be a Cartan involution on $\gfr$ and let $\kfr<\gfr$ be the fixed point algebra of $\theta$. Then $\kfr$ is reductive, i.e. can be written as $\kfr = \L {z(k)} \oplus \kfr_{ss}$ where $\kfr_{ss}$ is semisimple and where $\L{z(k)}$ denotes the centre of $\kfr$, and moreover $\dim \L{z(k)} \leq 1$. Now $\gfr$ is Hermitian if $\dim \L{z(k)} =1$ and non-Hermitian otherwise. 

A key feature of simple Lie algebras is that they are always algebraic, i.e. their associated adjoint groups are not only Lie groups, but in fact algebraic groups. Now simple algebraic groups admit a (multiplicative) Jordan decomposition, and this induces an additive Jordan decomposition of the corresponding Lie algebras. We will now describe this decomposition explicitly in our setting.

We first recall that given the pair $(\gfr, \kfr)$, there exists an Iwasawa decomposition of the form 
\[
\gfr = \kfr \oplus \L a \oplus \L n =  \L {z(k)} \oplus \kfr_{ss} \oplus \L a \oplus \L n,
\]
such that $\L a < \L g$ is an abelian subalgebra with $\theta|_{\L a} \equiv -1$ and maximal with respect to these two properties and $\L n$ consists of {\rm ad}-nilpotent elements \cite[Prop. 6.43]{Knapp}. Now the additive refined Jordan decomposition of $\gfr$ can be stated as follows \cite[Cor. 2.5]{Borel}:
\begin{lemma}\label{AdditiveRJC} Let $\gfr$ be a simple Lie algebra with Iwasawa decomposition $\gfr =\L {z(k)} \oplus \kfr_{ss} \oplus \L a \oplus \L n$ and let $G$ be the $1$-connected Lie group with Lie algebra $\gfr$. Then for every $X \in \gfr$ there exist unique elements $X_c, X_k, X_a, X_n \in \gfr$ and (in general non-unique) elements $Y_c \in \L{z(k)}, Y_k \in \kfr_{ss}$, $Y_a \in \L a$, $Y_n \in \L n$ such that the following hold:
\begin{enumerate}[(i)]
\item $X = X_c+X_k + X_a + X_n$.
\item $X_c, X_k, X_a, X_n$ commute pairwise.
\item $X_j \in {\rm Ad(G)}(Y_j)$ for $j \in \{c, k,a,n\}$.
\end{enumerate}
\end{lemma}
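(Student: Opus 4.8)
The plan is to obtain the decomposition from the (additive) Jordan decomposition of the algebraic Lie algebra $\gfr$, refined by the real structure, and then to match the resulting canonical pieces against the Iwasawa summands. Throughout I would work in the adjoint group $\bar G = \Ad(G)$, so that ``$\Ad(G)$-orbit'' means $\bar G$-orbit. First I would record the coarse decomposition: since $\gfr$ is algebraic, it admits a unique additive Jordan decomposition $X = X_s + X_n$ into commuting elements with $\ad(X_s)$ semisimple and $\ad(X_n)$ nilpotent. As $\gfr$ is a real form, I would split $\ad(X_s)$ further according to its purely imaginary and real eigenvalues. The two spectral projections are real polynomials in $\ad(X_s)$, hence derivations of $\gfr$, hence of the form $\ad(X_e)$ and $\ad(X_h)$ for unique $X_e, X_h \in \gfr$ (using that $\ad$ is injective on a semisimple Lie algebra). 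By construction the elliptic, hyperbolic and nilpotent parts $X_e, X_h, X_n$ commute pairwise and are uniquely determined by $X$.

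Next I would conjugate each part into its Iwasawa summand. A hyperbolic element has $\R$-diagonalizable $\ad$ and is therefore $\Ad(G)$-conjugate into the maximal $\R$-split abelian subalgebra $\L a$, so I set $X_a := X_h$ and choose $Y_a \in \L a$ with $X_a \in \Ad(G)(Y_a)$. A nilpotent element lies in the nilradical of a minimal parabolic, and all minimal parabolics are conjugate, so $X_n$ is $\Ad(G)$-conjugate into $\L n$, giving $Y_n$. Finally an elliptic element generates a relatively compact one-parameter subgroup and so is $\Ad(G)$-conjugate into the maximal compactly embedded subalgebra $\kfr$; choosing $g$ with $\Ad(g)^{-1}X_e \in \kfr$ and decomposing $\Ad(g)^{-1}X_e = Y_c + Y_k$ along $\kfr = \L{z(k)} \oplus \kfr_{ss}$, I put $X_c := \Ad(g)(Y_c)$ and $X_k := \Ad(g)(Y_k)$. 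Then $X_e = X_c + X_k$ with $X_c \in \Ad(G)(Y_c)$ and $X_k \in \Ad(G)(Y_k)$, and since $Y_c$ is central in $\kfr$ one has $[Y_c, Y_k]=0$, whence $[X_c, X_k]=0$. Together with the commuting of $X_e, X_h, X_n$ this gives properties (i)--(iii).

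The hard part will be the uniqueness clause, and within it the fine splitting $X_e = X_c + X_k$; uniqueness of $X_e, X_h, X_n$ themselves is immediate, since they are canonical functions of $X$. For $X_c, X_k$ the difficulty is that the conjugator $g$ above is highly non-unique, the more so because in the Hermitian case the centralizer of $X_e$ may be noncompact. The key lemma I would establish is that the central representative is intrinsic: two elements of $\kfr$ that are $\Ad(G)$-conjugate are already $\Ad(K)$-conjugate for a maximal compact $K$ of $\bar G$, and for $k \in K$ the map $\Ad(k)|_{\kfr}$ is an inner automorphism of $\kfr$, which fixes the centre $\L{z(k)}$ pointwise and preserves $\kfr_{ss} = [\kfr,\kfr]$. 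Consequently $Y_c$ and the $\Ad(G)$-orbit of $Y_k$ depend only on $X_e$, which is exactly what (iii) asks for. I expect the main obstacle to be precisely the control of this fine splitting against the stabilizer of $X_e$: one must check that the rigidity of $\L{z(k)}$ under inner automorphisms, combined with the $\Ad(K)$-conjugacy of elliptic elements, pins the decomposition down at the level of $\Ad(G)$-orbits, the genuinely canonical data being $X_e = X_c + X_k$, $X_a$ and $X_n$ while the standard representatives $Y_c, Y_k, Y_a, Y_n$ are determined only up to the relevant Weyl group actions and conjugacy — consistent with the statement's parenthetical remark that the $Y_j$ are in general non-unique.
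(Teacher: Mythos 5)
The paper does not actually prove this lemma: it is quoted verbatim from Borel's article (cited as \cite[Cor. 2.5]{Borel}), so there is no internal argument to compare yours against. Your construction of the coarse pieces is the standard and correct one: the additive Jordan decomposition, the further spectral splitting of $\ad(X_s)$ into elliptic and hyperbolic parts via real polynomials (hence inner derivations), and the conjugation of hyperbolic, nilpotent and elliptic elements into $\L a$, $\L n$ and $\kfr$ respectively are all fine, as is the observation that $[X_c,X_k]=0$ because $Y_c$ is central in $\kfr$. Your $\Ad(K)$-conjugacy lemma is also a correct route to showing that the central representative $Y_c$ depends only on $X$.

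The genuine gap is condition (ii): you never verify that $X_c$ and $X_k$ commute with $X_a$ and $X_n$, and in the Hermitian case this cannot be arranged. If $X_c$ is $\Ad(G)$-conjugate to a nonzero element of $\L{z(k)}=\R Z_0$, then its centralizer in $\gfr$ is a conjugate of $\L z_\gfr(Z_0)=\kfr$ (because $\ad(Z_0)$ is invertible on $\pfr$), hence compactly embedded, so no nonzero hyperbolic or nilpotent element can commute with $X_c$. Concretely, in $\gfr=\L{sp}(4,\R)$ take $X=(A,B)\in\L{sl}_2(\R)\oplus\L{sl}_2(\R)\subset\gfr$ with $A\neq 0$ elliptic and $B\neq 0$ hyperbolic: the elliptic part $X_e=(A,0)$ has eigenvalues $\{\pm i\theta,0,0\}$ and hence is not conjugate into $\kfr_{ss}\cong\L{su}(2)$ (whose nonzero elements have doubled eigenvalues $\{\pm i\mu,\pm i\mu\}$), so any admissible splitting forces $X_c\neq 0$, and then $[X_c,X_a]\neq 0$. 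What your argument actually proves is the nested statement $[X_e,X_h]=[X_e,X_n]=[X_h,X_n]=0$ together with $X_e=X_c+X_k$, $[X_c,X_k]=0$ --- and that weaker version is all the proof of Theorem \ref{MainTheorem1Ext} uses. Relatedly, the asserted uniqueness of $X_c$ and $X_k$ individually also fails: an element of $Z_G(X_e)$ need not fix $X_c$ (in $\L{su}(2,1)$ the centralizer of $Y_c+Y_k=\mathrm{diag}(4i,-2i,-2i)$ contains noncompact directions moving $Y_c=\mathrm{diag}(i,i,-2i)$), and conjugating the splitting by such an element yields a second decomposition satisfying (i)--(iii). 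Your closing sentence, which retreats to the position that only $X_e$, $X_a$, $X_n$ are canonical, is the correct one, but it contradicts the uniqueness clause you set out to prove; the statement should be weakened to the nested form, with the canonical data being $X_e,X_h,X_n$ and the well-defined central representative $Y_c$.
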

Note that the elements $Y_c, Y_k, Y_a, Y_n$ are determined up to the action of ${\rm Ad}(G)$. In fact, the element $Y_c$ is uniquely determined, as can be seen as follows: Assume $Y_c, Y_c'$ were elements of $\L{z(k)}$ and ${\rm Ad}(g)(Y_c) = Y_c'$. Since $\dim \L{z(k)} \leq 1$ we have $Y_c = \lambda Y_c'$ for some $\lambda \in \R$. In particular, if $f: G\to \R$ is conjugation-invariant then $f(\exp(Y_c)) = f(\exp(Y_c'))$. If $\lambda \neq 1$ we would deduce that every continuous conjugation-invariant homogeneous function on $G$ would have to vanish on $Z(K)$; however, a non-trivial such function is given by the Guichardet-Wigner quasimorphism \cite{GuichardetWigner}. We deduce that $Y_c$ is uniquely determined by $X$ and refer to $Y_c$ as the \emph{central elliptic part} of $X$. 

We need one other basic structural property of semisimple Lie algebras. Recall from \cite[Prop. 14.31]{FultonHarris} that given an irreducible abstract root system $\Sigma$ spanning a vector space $V$, the action of the Weyl group $W$ of $\Sigma$ on $V$ is irreducible. It follows that the space $V^W$ of $W$-invariants is trivial, and this conclusion extends to reducible root systems. In particular, if $\L a$ is the Cartan subalgebra of a semisimple Lie algebra and $W$ the corresponding Weyl group, then $(\L a^*)^W = \{0\}$.

\subsection{Ad-invariant Lie quasi-states on simple Lie algebras}
We can now prove the following strengthening of Theorem \ref{MainTheorem1}.
\begin{theorem}\label{MainTheorem1Ext} Let $\gfr$ be a simple real Lie algebra and $\zeta\in {{Q}}_{\rm Ad}(\gfr)$. Then there exists a linear functional $\alpha \in \L{z(k)}^*$ such that for every $X \in \gfr$ with central elliptic part $Y_c \in \L z(\kfr)$ we have
\begin{equation}\label{GWFormula}\zeta(X) = \alpha(Y_c).\end{equation}
In particular,
\[
\dim {{Q}}_{\rm Ad}(\gfr) =\left\{\begin{array}{ll} 1,& \gfr \: \textrm{  Hermitian},\\ 0, & \gfr \: \textrm{  non-Hermitian},\end{array}\right.
\]
and thus ${Q}_{\rm Ad}(\L g) = {\mathcal Q}_{\rm Ad}(\L g)
={\mathcal Q}_{\rm qm}(\L g)$. 
\end{theorem}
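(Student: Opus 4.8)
The plan is to evaluate $\zeta$ on an arbitrary $X \in \gfr$ by feeding the additive refined Jordan decomposition of Lemma \ref{AdditiveRJC} into the two defining properties of $\zeta$: additivity along commuting elements and Ad-invariance. Writing $X = X_c + X_k + X_a + X_n$ with the four summands commuting pairwise, they span an abelian subalgebra on which $\zeta$ is linear, so the quasi-state property gives
\[
\zeta(X) = \zeta(X_c) + \zeta(X_k) + \zeta(X_a) + \zeta(X_n).
\]
Since $X_j \in \Ad(G)(Y_j)$, Ad-invariance turns this into $\zeta(X) = \zeta(Y_c) + \zeta(Y_k) + \zeta(Y_a) + \zeta(Y_n)$. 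Everything thus reduces to computing $\zeta$ on the four ``pure type'' representatives, and the formula \eqref{GWFormula} follows once I show that the $\kfr_{ss}$-, $\L a$- and $\L n$-contributions vanish while the central contribution is linear.

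For the central part, observe that $\L{z(k)}$ is abelian (indeed at most one-dimensional), so $\zeta$ restricts to a linear functional $\alpha \deq \zeta|_{\L{z(k)}} \in \L{z(k)}^*$; hence $\zeta(Y_c) = \alpha(Y_c)$. The nilpotent contribution is killed by a scaling argument: $Y_n \in \L n$ is ad-nilpotent, and if $Y_n \neq 0$ then Jacobson--Morozov provides a neutral element $H$ with $[H, Y_n] = 2 Y_n$, so that $\Ad(\exp(tH)) Y_n = e^{2t} Y_n$. Ad-invariance gives $\zeta(e^{2t} Y_n) = \zeta(Y_n)$, while homogeneity of $\zeta$ along the line $\R Y_n$ gives $\zeta(e^{2t} Y_n) = e^{2t}\zeta(Y_n)$; comparing for all $t$ forces $\zeta(Y_n) = 0$.

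For the $\L a$-contribution I use the restricted Weyl group $W = N_K(\L a)/Z_K(\L a)$, whose action on $\L a$ is realized by inner automorphisms from $\Ad(G)$. The restriction $\lambda \deq \zeta|_{\L a} \in \L a^*$ is therefore $W$-invariant by Ad-invariance of $\zeta$. Because $\gfr$ is simple the restricted root system is irreducible, so the structural fact recalled above (applied to the restricted root system in place of the Cartan one) yields $(\L a^*)^W = \{0\}$, whence $\lambda = 0$ and $\zeta(Y_a) = 0$. For the $\kfr_{ss}$-contribution, note that $\kfr_{ss}$ is a compact semisimple Lie algebra and that $\zeta|_{\kfr_{ss}}$ is an Ad-invariant Lie quasi-state on it, the adjoint group of $\kfr_{ss}$ being realized inside $\Ad(G)$. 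By the compact case \cite[Theorem 4.1]{EP09} we have $Q_{\rm Ad}(\kfr_{ss}) = \mathcal Q_{\rm qm}(\kfr_{ss})$, which vanishes since a compact semisimple Lie algebra carries no nonzero homomorphism to $\R$ and (having finite center) no nonzero homogeneous quasimorphism; hence $\zeta(Y_k) = 0$. This establishes \eqref{GWFormula}.

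Finally, \eqref{GWFormula} shows that $\zeta \mapsto \alpha$ embeds $Q_{\rm Ad}(\gfr)$ into $\L{z(k)}^*$, so $\dim Q_{\rm Ad}(\gfr) \leq \dim \L{z(k)} \leq 1$, the bound being attained exactly when $\gfr$ is Hermitian. In the Hermitian case the directional derivative of the Guichardet--Wigner quasimorphism is a nonzero element of $\mathcal Q_{\rm qm}(\gfr) \subseteq \mathcal Q_{\rm Ad}(\gfr) \subseteq Q_{\rm Ad}(\gfr)$ (it is nonzero on $\L{z(k)}$, as used above), forcing all three spaces to coincide and have dimension $1$; in the non-Hermitian case $\L{z(k)} = 0$ makes all three vanish. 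I expect the main obstacle to be the $\L a$-step: one must verify that the restricted root system of a simple real Lie algebra is genuinely irreducible, so that it admits no nonzero Weyl-invariant covector, and that its Weyl group is realized by inner automorphisms, so that Ad-invariance of $\zeta$ indeed translates into the $W$-invariance of $\lambda$ needed to apply the structural fact.
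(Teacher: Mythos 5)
Your proposal is correct and follows essentially the same route as the paper: feed the additive refined Jordan decomposition of Lemma \ref{AdditiveRJC} into linearity-on-abelian-subalgebras plus Ad-invariance, then kill the $\L a$-part by Weyl-group invariance, the nilpotent part by the Jacobson--Morozov scaling trick (your $\Ad(\exp(tH))Y_n = e^{2t}Y_n$ is exactly the paper's conjugation of $e$ to $2e$), and the $\kfr_{ss}$-part by the compact-case argument (which you cite from \cite{EP09} where the paper repeats the toral/Weyl argument for completeness). The only cosmetic difference is that citation versus repetition in the $\kfr_{ss}$ step, and your explicit use of the Guichardet--Wigner quasimorphism to realize the dimension bound, which the paper leaves implicit in the final "the theorem follows."
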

\begin{proof} 
Let $\zeta$ be an \rm{Ad-} invariant quasi-state on $\gfr$. By the first two parts of Lemma \ref{AdditiveRJC} we have 
\[\zeta(X) = \zeta(X_c) + \zeta(X_k)+\zeta(X_a)+\zeta(X_n).\] 
Since $\zeta$ is \rm{Ad-} invariant the last part of that lemma yields $\zeta(X_j) =\zeta(Y_j)$ for $j \in \{k,a,n\}$, whence
\begin{equation}\label{JordanApplication}
\zeta(X) = \zeta(Y_c)+\zeta(Y_k)+\zeta(Y_a)+\zeta(Y_n).
\end{equation}
Since $\L{z(k)}$ is abelian the restriction $\alpha := \zeta|_\L{z(k)}$ is a linear functional. It thus remains to show only that $\zeta$ vanishes on $\L a$, $\L n$ and $\L k_{ss}$.

Since $\L a$ is abelian,  $\zeta|_{\L a} \in \L a^*$ is linear, and since $\zeta$ is Ad-invariant, $\zeta|_{\L a}$ is invariant under the adjoint action of the Weyl group $W := N_K(\L a)/Z_K(\L a)$. Now $(\L a^*)^W=\{0\}$ by the remark at the end of the last subsection, and hence $\zeta|_{\L a} = 0$.

Now let $X \in \gfr$ be an arbitrary nilpotent element. By the Jacobson-Morozov theorem (see e.g. \cite[Thm. 10.3]{Knapp}), there exists an embedding $\phi: \L{sl}_2(\R) \hookrightarrow \gfr$ such that $X = \varphi(e)$, where
\[
e = \left(\begin{matrix} 0 & 1\\ 0& 0 \end{matrix}\right).
\]
Now $e$ is conjugate to $2e$ inside $\L{sl}_2(\R)$, since
\[
\left(\begin{matrix} \sqrt 2 & 0\\ 0&  \frac{1}{\sqrt 2} \end{matrix}\right)\cdot\left(\begin{matrix} 0 & 1\\ 0& 0 \end{matrix}\right)\cdot\left(\begin{matrix}  \frac{1}{\sqrt 2} & 0\\ 0&  \sqrt 2 \end{matrix}\right) =  \left(\begin{matrix} 0 & 2\\ 0& 0 \end{matrix}\right),
\]
and hence $X = \phi(e)$ is conjugate to $2X = \phi(2e)$ inside $\L g$. Thus invariance of $\zeta$ implies $\zeta(X) = \zeta(2X) = 2\cdot \zeta(X)$, whence $\zeta(X) = 0$. This shows that $\zeta$ vanishes on all nilpotent elements of $\gfr$, and in particular on $\L n$.

The fact that $\zeta$ vanishes on $\kfr_{ss}$ was already observed in \cite{EP09}. We repeat the argument here for completeness: Since $\kfr_{ss}$ is compact, every Ad-orbit intersects any given maximal toral subalgebra $\L t < \L k_{ss}$, whence $\zeta|_{\kfr_{ss}}$ is determined by $\zeta|_{\L t}$. Since $\L t$ is abelian we have $\zeta|_{\L t}\in \L t^*$ and moreover $\zeta|_{\L t}$ is invariant under the adjoint action of $W_{\kfr} := N_K(\L t)/Z_K(\L t)$. Since $\kfr_{ss}$ is semisimple we have $(\L t^*)^{W_{\kfr}} = \{0\}$ (again by the remark at the end of the last subsection) and thus $\zeta|_{\L t} = 0$ and consequently also $\zeta|_{\kfr_{ss}} = 0$. The theorem follows.
\end{proof}
\begin{remark}
\begin{enumerate}
\item The most substantial ingredient in the proof of Theorem \ref{MainTheorem1Ext} is the vanishing of $\zeta$ along nilpotent elements, which we deduced from the Jacobson-Morozov theorem. If one is willing to assume continuity of $\zeta$ then one can give a more elementary proof of this result which does not invoke the Jacobson-Morozov theorem. Instead one uses the fact that
for every $X \in \L n$ there exists a sequence $(g_n)$ in $G$ such that \[\lim_{n \to \infty}{\rm Ad}(g_n)(X) = 0.\] Assuming continuity of $\zeta$ this is enough to deduce that
\[
0 = \zeta(\lim_{n \to \infty}{\rm Ad}(g_n)(X)) = \lim_{n \to \infty}\zeta({\rm Ad}(g_n)(X)) = \lim_{n \to \infty}\zeta(X) = \zeta(X).
\]
\item In  the case of a simple Hermitian Lie algebra, Theorem \ref{MainTheorem1Ext} gives an explicit formula for all Ad-invariant Lie quasi-states on $\gfr$. Now let $G$ denote the simply-connected Lie group associated with $\gfr$ and $Z(K) \cong \R$ denote the analytic subgroup of $G$ with Lie algebra $\L{z(k)}$. Then for every measurable homomorphism $\chi: Z(K) \to \R$ there is a unique homogeneous quasimorphism $f$ on $G$ subject to the normalisation $f|_{Z(K)} = \chi$, and by Theorem \ref{MainTheorem1Ext} we have
\[
f(\exp(X)) = \chi(\exp(Y_c)).
\]
A global version of this formula was first pointed out in \cite{Surface}.\\

\item The classification of Lie quasi-states on $\gfr$ does not by itself provide a classification of integrated Lie quasi-states on $G$, since there is no general argument which would ensure that two integrated Lie quasi-states with the same directional derivatives coincide. (Such an argument would work e.g. if the Lie quasi-states in question were of class $C^1$, but in that case they are necessarily linear anyway.) It therefore requires additional effort to prove the following global version of Theorem \ref{MainTheorem1Ext}.
\end{enumerate}
\end{remark}
\begin{theorem}\label{GlobalThm} Let $G$ be a connected reductive Lie group and $f: G \to \R$ a conjugation-invariant integrated Lie quasi-state. Then $f$ is a homogeneous quasimorphism.
\end{theorem}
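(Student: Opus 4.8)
The plan is to show first that $f$ is automatically homogeneous, and then to upgrade the infinitesimal classification of Theorem~\ref{MainTheorem1Ext} to a global statement. Homogeneity $f(g^n) = n f(g)$ ($n \in \Z$) is automatic: since $g$ commutes with all of its powers, additivity on commuting pairs gives $f(g^2) = 2f(g)$ and, inductively, $f(g^n) = n f(g)$ for $n \ge 0$; from $f(e)=0$ and $f(g)+f(g^{-1}) = f(e)$ one also gets $f(g^{-1}) = -f(g)$. Hence it remains only to prove that the defect $D(f) = \sup_{g,h}|f(gh)-f(g)-f(h)|$ is finite. The strategy is to produce an explicit homogeneous quasimorphism $f_0$ on $G$ with the same directional derivative as $f$, and then to show $f = f_0$ by proving that the difference vanishes on enough of $G$.

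First I would construct $f_0$. Since $f$ is conjugation-invariant and additive on commuting pairs, its directional derivative $\zeta := f \circ \exp_G$ is an Ad-invariant Lie quasi-state on $\gfr$. Writing $\gfr = \mathfrak{z} \oplus \bigoplus_i \gfr_i$ as the sum of its centre and its simple ideals, Lemma~\ref{Trivialities} and Theorem~\ref{MainTheorem1Ext} describe $\zeta$ completely: it is linear on $\mathfrak{z}$, vanishes on every non-Hermitian $\gfr_i$ as well as on all nilpotent and hyperbolic directions, and on each Hermitian $\gfr_i$ it is $X \mapsto \alpha_i(Y_c)$, with $Y_c$ the central elliptic part. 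A short compactness argument further pins $\zeta$ down on the specific group $G$: the restriction of $f$ to any compact connected abelian subgroup $S \le G$ is a measurable homomorphism $S \to \R$, hence trivial, so $\zeta$ vanishes on every toral direction whose associated one-parameter subgroup in $G$ is compact. The nonzero part of $\zeta$ is therefore supported precisely on the non-compact directions of $\mathfrak{z}$ and on those Hermitian factors of $G$ with infinite centre, where it is realized respectively by a continuous homomorphism $Z(G)_0 \to \R$ and by multiples of the Guichardet--Wigner quasimorphism \cite{GuichardetWigner}. Summing these yields a homogeneous quasimorphism $f_0$ on $G$ with $f_0 \circ \exp_G = \zeta$.

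Next I would globalize via the Jordan decomposition. Set $\psi := f - f_0$; it is again additive on commuting pairs, and $\psi \circ \exp_G = \zeta - \zeta = 0$, so $\psi$ vanishes on the image of $\exp_G$. Using the complete multiplicative Jordan decomposition in the connected reductive group $G$, every $g \in G$ factors as $g = g_e g_h g_u$ with pairwise commuting elliptic, hyperbolic and unipotent parts. The unipotent part is $\exp_G$ of a nilpotent element and the hyperbolic part is $\exp_G$ of a hyperbolic element, so both lie in the image of $\exp_G$; the elliptic part lies in a connected abelian analytic subgroup ($\exp_G$ of a compact-type toral subalgebra, or of its $\R$-cover when $G$ has infinite centre), and since the image of $\exp_G$ is conjugation-invariant, $g_e$ too lies in that image. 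Additivity of $\psi$ on the commuting triple then gives $\psi(g) = \psi(g_e)+\psi(g_h)+\psi(g_u) = 0$, whence $f = f_0$ is a homogeneous quasimorphism.

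The main obstacle is the claim that the elliptic part always lies in the image of $\exp_G$. This is where connectedness of $G$ and the reductive structure are essential: maximal compact Cartan subgroups can be disconnected, and in the infinite-centre case --- precisely the case carrying the nontrivial quasimorphisms --- the relevant compact torus is replaced by a non-compact $\R$-cover, so one must verify carefully that $g_e$ lands in the connected, hence $\exp$-surjective, analytic subgroup rather than in a stray component (the point being that $Z(G)$ itself sits inside such a one-parameter subgroup). A secondary technical issue is matching $f_0$ to $\zeta$ exactly in the first step; the compactness argument is what reconciles the abstract classification of $\mathcal Q_{\mathrm{qm}}(\gfr)$, which a priori allows arbitrary central characters, with the rigidity forced by the actual global topology of $G$.
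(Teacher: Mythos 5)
Your overall strategy coincides with the paper's: produce a homogeneous quasimorphism $f_0$ with the same directional derivative as $f$ (using Theorem \ref{MainTheorem1Ext}), observe that $\psi = f - f_0$ is still additive on commuting pairs and vanishes on the image of $\exp_G$, and then kill $\psi$ on all of $G$ via the multiplicative Jordan decomposition. The differences are in how the centre and the covering topology are handled, and this is exactly where your acknowledged ``main obstacle'' sits. You try to lift the complete multiplicative Jordan decomposition from ${\rm Ad}(G)$ up to $G$ and then argue that the elliptic factor $g_e$ lies in $\exp_G(\gfr)$. Besides the component issue you flag, there is a second wrinkle you do not mention: lifts to $G$ of pairwise commuting elements of $G/Z(G)$ only commute up to central elements, so one must also check that the lifted factors $g_e, g_h, g_u$ genuinely commute in $G$ before additivity of $\psi$ can be applied to the triple. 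Both problems are real but avoidable, and the paper avoids them by going in the opposite direction: it first reduces to $G$ simply connected (lifting $f$ to $\widetilde G$ and descending the conclusion), fixes an Iwasawa decomposition $G = KAN$ with $K$, $A$, $N$ connected and $\exp$-surjective so that $f_0$ vanishes on all three, notes that $Z(G)\subset K$ forces $f_0|_{Z(G)}\equiv 0$, and then pushes $f_0$ \emph{down} to the algebraic group ${\rm Ad}(G)=G/Z(G)$, where the Jordan decomposition $g = g_kg_ag_n$ into commuting factors conjugate into $K_0$, $A_0$, $N_0$ is standard and needs no lifting. Your treatment of the non-simply-connected case (measurable homomorphisms on compact connected abelian subgroups are trivial, so $\zeta$ dies on toral directions and on Hermitian factors with finite centre) is a correct and somewhat more direct substitute for the paper's lift-to-$\widetilde G$ reduction. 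To make your write-up complete you should either supply the commutativity of the lifted Jordan factors (e.g.\ by constructing $g_h$ and $g_u$ canonically as exponentials commuting with any lift of $\bar g_e$, and absorbing the central ambiguity into $g_e$ using $Z(G)\subset K$), or simply adopt the paper's descent to ${\rm Ad}(G)$, which renders the whole issue moot.
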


\begin{proof} We first observe that we may assume that $G$ is simply-connected. Indeed, if $G$ is arbitrary and $\widetilde{G}$ is its universal covering group, then every conjugation-invariant integrated Lie quasi-state on $G$ lifts to $\widetilde{G}$, and if this can be shown to be a homogeneous quasimorphism, then it descends to a homogeneous quasimorphism on $G$, see \cite{BSH2}. Now a simply-connected reductive Lie group $G$ is a product of abelian and simple factors. Since every integrated Lie quasi-state on an abelian group is a homomorphism, it suffices to prove the theorem for a general simply-connected simple Lie group $G$.

In this case, we consider the Lie algebra $\gfr$ of $G$ and and fix an Iwasawa decomposition $\gfr =\L k \oplus \L a \oplus \L n$. Let $K$ be a maximal {\rm Ad}-compact subgroup of $G$ with Lie algebra $\L k$ and let $A$ and $N$ be the analytic subgroups associated with $\L a$ and $\L n$. Since $G$ is connected and $K$ is a deformation retract of $G$, also $K$ is connected. Since $K$ is compact-times-abelian and $A$ and $N$ are nilpotent and all three are connected the restricted exponential functions $\L k \to K$,  $\L a \to A$ and $\L n \to N$ are all onto. It thus follows from Theorem \ref{MainTheorem1Ext} that there exists a homogeneous quasimorphism $\phi: G \to \R$ such that $f_0 := f-\phi$ vanishes on $K$, $A$ and $N$. It suffices to show that $f_0 \equiv 0$.

To this end we first observe that since $Z(G) \subset K$ we have $f_0|_{Z(G)}\equiv 0$, and hence $f$ factors through a quasimorphism on $G_0 := {\rm Ad}(G) = G/Z(G)$ which vanishes on the images $K_0$, $A_0$ and $N_0$ of $K$, $A$ and $N$ in $G_0$. The group $G_0$ (unlike $G$) is automatically algebraic, whence admits a multiplicative Jordan decomposition: Every $g \in G$ can be written as a product of elements $g_k$, $g_a$ and $g_n$ which pairwise commute and are conjugate to elements in $K_0$, $A_0$ and $N_0$ respectively. It follows that $f_0(g) = f_0(g_k) + f_0(g_a)+f_0(g_n) = 0$, which finishes the proof.
\end{proof}

\section{Semidirect products and solvable examples}\label{SecAlmostAbelian}
\label{sec:semidirect}

\subsection{Semidirect products and normalized Lie quasi-states}\label{SemidirectGeneral}
The goal of this section is to establish Theorem \ref{MainTheorem2} concerning Ad-invariant Lie quasi-states on certain solvable Lie algebras. Before we turn to the specific setting of that theorem we collect a few general facts about semidirect product algebras of the form $\gfr = V \rtimes \hfr$, where $V\lhd \gfr$ is an abelian ideal and $\hfr < \gfr$ is a complementary subalgebra. We denote by $\rho: \hfr \to \mathfrak{gl}(V)$  the restriction of the adjoint action of $\hfr$ to $V$; then $\rho$ is a representation of $\hfr$ and $\gfr = \gfr(V, \hfr, \rho)$ is uniquely determined by the triple $(V, \hfr, \rho)$. 

Note that if $\alpha \in V^*$ is any linear functional, then the map $(v, X) \mapsto \alpha(v)$ defines a linear functional, hence a Lie quasi-state on $\gfr$. Similarly, if $\zeta \in Q(\hfr)$ is a Lie quasi-state, then so is the map $(v, X)\mapsto \zeta(X)$. This is because the projection onto the second coordinate is a Lie algebra homomorphism and thus $[(v,X), (w, Y)] = 0$ implies $[X,Y] = 0$. In particular, we can consider $V^*$ and $Q(\hfr)$ as subspaces of $Q(\gfr)$.

We define subspaces $Q_0(\gfr) \subset Q(\gfr)$ and $\mathcal Q_0(\gfr) \subset \mathcal Q(\gfr)$ by
\[
Q_{0}(\gfr) = \big\{\zeta \in Q(\gfr)\mid \forall v \in V, X \in \hfr:\;\zeta(v,0) = \zeta(0,X)=0\big\}.
\]
and $\mathcal Q_0(\gfr) := Q_{0}(\gfr) \cap \mathcal Q(\gfr)$. We observe:
\begin{lemma}\label{SemidirectLemma}
Let $\gfr = \gfr(V, \hfr, \rho)$ as above. Then $Q(\gfr)$ can be written as the internal direct sum 
\begin{equation}\label{NormalizedQS}
Q(\gfr) = Q_{0}(\gfr) \oplus V^* \oplus Q(\hfr),\end{equation}
and similarly we have
\[\mathcal Q(\gfr)  = \mathcal Q_{0}(\gfr) \oplus V^* \oplus \mathcal Q(\hfr).\] 
\end{lemma}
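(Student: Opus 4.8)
The plan is to prove the decomposition $Q(\gfr) = Q_0(\gfr) \oplus V^* \oplus Q(\hfr)$ by exhibiting an explicit projection of any Lie quasi-state onto its three components and checking that the difference lands in $Q_0(\gfr)$. Given $\zeta \in Q(\gfr)$, I would first restrict it to the two distinguished abelian-type pieces: the restriction $\zeta|_V$, viewed via $v \mapsto \zeta(v,0)$, is a genuine linear functional because $V$ is an abelian ideal (so all its elements commute and $\zeta$ is linear there), giving an element $\alpha \in V^*$; and the restriction $X \mapsto \zeta(0,X)$ is, by the observation preceding the lemma, a Lie quasi-state $\eta$ on $\hfr$, since the projection $\gfr \to \hfr$ is a Lie algebra homomorphism and hence carries commuting pairs to commuting pairs. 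I would then set $\zeta_0 \deq \zeta - \alpha - \eta$ (where $\alpha$ and $\eta$ are regarded as elements of $Q(\gfr)$ via the two inclusions $V^* \hookrightarrow Q(\gfr)$ and $Q(\hfr) \hookrightarrow Q(\gfr)$ described in the text) and verify that $\zeta_0 \in Q_0(\gfr)$.

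The verification that $\zeta_0 \in Q_0(\gfr)$ is a direct computation: by definition $\zeta_0(v,0) = \zeta(v,0) - \alpha(v) - \eta(0) = 0$ since $\alpha(v) = \zeta(v,0)$ and $\eta$ applied to the $\hfr$-component $0$ vanishes; similarly $\zeta_0(0,X) = \zeta(0,X) - \alpha(0) - \eta(X) = 0$ since $\eta(X) = \zeta(0,X)$ by construction. One must also check that $\zeta_0$ is itself a Lie quasi-state, i.e. that it is linear on commuting pairs — but this is immediate because $\zeta$ is a Lie quasi-state and both $\alpha$ and $\eta$ are Lie quasi-states on $\gfr$ (the first being linear, the second being the pullback of a Lie quasi-state along a homomorphism), so the difference of Lie quasi-states restricted to any abelian subalgebra is again additive there. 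This establishes the existence part of the direct sum, namely $Q(\gfr) = Q_0(\gfr) + V^* + Q(\hfr)$.

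For directness of the sum, I would show that the three summands meet only in $0$, or more precisely that the decomposition $\zeta = \zeta_0 + \alpha + \eta$ is unique. Suppose $\zeta_0 + \alpha + \eta = 0$ with $\zeta_0 \in Q_0(\gfr)$, $\alpha \in V^*$, $\eta \in Q(\hfr)$. Evaluating at $(v,0)$ gives $0 = \zeta_0(v,0) + \alpha(v) + \eta(0) = \alpha(v)$ for all $v \in V$, so $\alpha = 0$; evaluating at $(0,X)$ gives $0 = \zeta_0(0,X) + \alpha(0) + \eta(X) = \eta(X)$ for all $X \in \hfr$, so $\eta = 0$; and then $\zeta_0 = 0$ follows. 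This uses exactly the two defining conditions of $Q_0(\gfr)$ together with the fact that the inclusions of $V^*$ and $Q(\hfr)$ are supported on the respective coordinate axes. No single step here is a genuine obstacle; the only point requiring care is keeping straight the three different ways elements sit inside $Q(\gfr)$ and making sure the defining vanishing conditions of $Q_0(\gfr)$ are used symmetrically for $V$ and for $\hfr$.

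Finally, for the continuous version $\mathcal Q(\gfr) = \mathcal Q_0(\gfr) \oplus V^* \oplus \mathcal Q(\hfr)$, I would simply observe that the projection maps constructed above are compatible with continuity. If $\zeta$ is continuous, then $\alpha = \zeta|_V$ is continuous (every linear functional is continuous, and in any case restriction of a continuous function is continuous) and $\eta(X) = \zeta(0,X)$ is continuous as the composition of $\zeta$ with the continuous inclusion $\hfr \hookrightarrow \gfr$; hence $\zeta_0 = \zeta - \alpha - \eta$ is continuous as well, so $\zeta_0 \in \mathcal Q_0(\gfr) = Q_0(\gfr) \cap \mathcal Q(\gfr)$. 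Conversely any element of $\mathcal Q_0(\gfr) \oplus V^* \oplus \mathcal Q(\hfr)$ is continuous since each summand consists of continuous functions. Thus the continuous decomposition is obtained by intersecting the algebraic one with $\mathcal Q(\gfr)$, and the proof is complete.
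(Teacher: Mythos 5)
Your proposal is correct and follows essentially the same route as the paper's own proof: restrict $\zeta$ to $V\times\{0\}$ and $\{0\}\times\hfr$ to obtain $\alpha$ and the quasi-state on $\hfr$, subtract to land in $Q_0(\gfr)$, and observe that continuity is inherited by each piece. Your explicit uniqueness check for directness is, if anything, slightly more careful than the paper's appeal to trivial pairwise intersections.
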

\begin{proof} Given $\zeta \in Q(\gfr)$, the restrictions $\zeta|_{V \times \{0\}}$ and $\zeta|_{\{0\} \times \L h}$ are Lie quasi-states. Now $V \times \{0\} \cong V$ is abelian, hence $\alpha(v) :=\zeta(v,0) \in V^*$. Similarly, $\psi(X) := \zeta(0,X)$ defines a Lie quasi-state on $\hfr$. Now let
\[
\zeta_0(v,X) := \zeta(v,X) - \alpha(v) - \psi(X).
\]
Then $\zeta_0 \in Q_{0}(\gfr)$ and $\zeta = \zeta_0+\alpha+\psi$, whence $Q(\gfr) = Q_{0}(\gfr)+V^* +Q(\hfr)$. Since the pairwise intersections of $Q_{0}(\gfr)$, $V^*$ and $Q(\hfr)$ (considered as subspaces of $Q(\gfr)$) are trivial, the sum is direct. Finally, if $\zeta$ is continuous, then so are $\zeta_0, \alpha$ and $\psi$ in the above decomposition.
\end{proof}
In the sequel we will refer to an element $\zeta \in Q_{0}(\gfr)$ as a \emph{normalized Lie quasi-state} on $\gfr$. We record the following property of continuous normalized quasi-states for later use:
\begin{lemma}\label{Sublinearity}
Let $\zeta \in  \mathcal Q_{0}(\gfr)$ be a continuous normalized Lie quasi-state and let $X \in \hfr$. Then the function $\zeta_X: V \to \R$ given by
$\zeta_X(v):= \zeta(v,X)$ is sublinear in the sense that
\[
\lim_{v \to \infty} \frac{\zeta_X(v)}{\|v\|} \to 0
\]
for any norm on $V$.
\end{lemma}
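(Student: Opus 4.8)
The plan is to reduce the whole statement to the two defining features of a continuous normalized Lie quasi-state: its homogeneity and its vanishing on $V$. The crucial preliminary observation is that \emph{every} Lie quasi-state is automatically homogeneous of degree one. Indeed, any single element $Z \in \gfr$ spans the one-dimensional, hence abelian, subalgebra $\R Z$, so the defining relation applied to the commuting pair $(Z,Z)$ gives $\zeta(tZ) = t\,\zeta(Z)$ for every $t \in \R$. I would record this first, since it is the only place where the quasi-state axiom really enters.

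Next I would exploit this homogeneity by rescaling in the $V$-direction. Since all norms on the finite-dimensional space $V$ are equivalent, it suffices to treat one fixed norm $\|\cdot\|$. For $v \neq 0$ write $\hat v := v/\|v\|$, so that $(v,X) = \|v\|\cdot\big(\hat v,\, X/\|v\|\big)$ in $\gfr = V \rtimes \hfr$. Applying homogeneity to $Z = \big(\hat v,\, X/\|v\|\big)$ then yields
\[
\frac{\zeta_X(v)}{\|v\|} = \zeta\Big(\hat v,\, \tfrac{X}{\|v\|}\Big).
\]
This identity is the heart of the argument: it trades the unbounded growth of $v$ for a vanishing perturbation $X/\|v\|$ of the second coordinate, while confining the first coordinate to the unit sphere $S$ of $(V,\|\cdot\|)$.

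Finally I would let $\|v\| \to \infty$ in this identity and invoke continuity together with normalization. As $\|v\| \to \infty$ the second argument tends to $0$, and normalization gives $\zeta(u,0) = 0$ for every $u \in V$, so pointwise the right-hand side tends to $\zeta(\hat v, 0) = 0$. To obtain this limit uniformly in the direction $\hat v$ --- which is exactly what the assertion $\zeta_X(v)/\|v\| \to 0$ as $v \to \infty$ demands --- I would use that $S$ is compact (here finite-dimensionality of $V$ is essential), so that $\zeta$ is uniformly continuous on the compact set $S \times \{Y \in \hfr : \|Y\| \le \|X\|\}$; comparing the value at $\big(\hat v,\, X/\|v\|\big)$ with the value $0$ at $(\hat v, 0)$ then forces $\sup_{\hat v \in S}\,\big|\zeta\big(\hat v,\, X/\|v\|\big)\big| \to 0$. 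The main (and essentially only) obstacle is precisely this uniformity in $\hat v$, and compactness of the sphere resolves it cleanly; everything else is bookkeeping.
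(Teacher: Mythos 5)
Your proposal is correct and follows essentially the same route as the paper's proof: both use degree-one homogeneity of the quasi-state to rewrite $\zeta_X(v)/\|v\|$ as $\zeta(v/\|v\|, X/\|v\|)$, and then uniform continuity on the product of unit balls together with the normalization $\zeta(\cdot,0)=0$ to conclude. Your write-up merely makes explicit the homogeneity step and the compactness argument that the paper leaves implicit.
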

\begin{proof} Since $\zeta$ is continuous, it is uniformly continuous on the product of unit balls. Since it is moreover normalized, we can compute
\[
\lim_{v \to \infty} \frac{\zeta_X(v)}{\|v\|} = \lim_{v \to \infty} \frac{\zeta(v, X)}{\|v\|} = \lim_{v \to \infty} \zeta(v/\|v\|, X/\|v\|) =  \lim_{v \to \infty} \zeta(v/\|v\|, 0)=0.
\]
\end{proof}

\subsection{Classification of Lie quasi-states when $\hfr$ is one-dimensional}

We now specialize to our main case of interest which is given by $\hfr = \R$, i.e. $\gfr = V \rtimes \R$ with $V \lhd \gfr$ abelian. (We will return to the more general situation in the next section.) In this situation we will adopt the following notation: We denote by $\phi \in {\rm End}(V)$ the restriction of the adjoint action of the generator $1 \in \R$ to $V$ so that
\[
[(v,s), (w,t)] = (s\phi(w)-t\phi(v),0).
\]
We then write $\gfr = \gfr_\phi$ and refer to $\phi$ as the \emph{underlying endomorphism} of $\gfr$. Note that if $\phi = 0$ then $\gfr_\phi = V\oplus \R$ is abelian and thus $Q(\gfr) = \gfr^*$. Thus we are going to assume from now on that $\phi \neq 0$.

Given a Lie quasi-state $\zeta \in {Q}(\gfr)$ we refer to the linear functional $\zeta_V \in V^*$ given by 
\[
\zeta_V(v):=\zeta(v,0), \quad \textrm{for all $v \in V$ },
\] 
as the \emph{canonical character} of $\zeta$. The following theorem provides a full classification of Lie quasi-states on $\gfr$. We will denote by $U$ the kernel and by $W$ the image of the endomorphism $\phi$ underlying $\gfr$.
\begin{theorem}\label{SLTheorem}
A function $\zeta: \gfr \to \R$ is a Lie quasi-state of canonical character  $\alpha \in V^*$ if and only if there exists a function $c: W \to \R$  such that
\begin{equation}\label{QSFormulaSemidirect1}
\zeta(v,t) = \left\{\begin{array}{ll}c(\phi(v)/t)\cdot t + \alpha(v) & t \neq 0\\ \alpha(v)& t= 0\end{array}\right.
\end{equation}
The quasi-state $\zeta$ given by \eqref{QSFormulaSemidirect1} is continuous if and only if the function $c$ is continuous and sublinear.
\end{theorem}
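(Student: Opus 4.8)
The plan is to exploit two structural features of a Lie quasi-state $\zeta$ on $\gfr=\gfr_\phi$: its homogeneity along one-dimensional subalgebras, and the abelian relations encoded by $\phi$. Since every line $\R\cdot(v,t)$ is an abelian subalgebra, the quasi-state property forces $\zeta(\lambda v,\lambda t)=\lambda\,\zeta(v,t)$ for all $\lambda\in\R$; hence, writing $(v,t)=t\cdot(v/t,1)$ for $t\neq0$, the whole function is determined by its restriction $g(w):=\zeta(w,1)$ to the slice $t=1$ together with the canonical character $\alpha=\zeta|_{V\times\{0\}}\in V^*$ (which is linear because $V\times\{0\}$ is abelian). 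With this reduction, the formula \eqref{QSFormulaSemidirect1} becomes equivalent to the single assertion that $h:=g-\alpha$ depends on $w$ only through $\phi(w)$, i.e. that $h$ is constant on the cosets of $U=\ker\phi$ and therefore descends to a function $c$ on $W=\im\phi$ with $c(\phi(w))=h(w)$; indeed $\zeta(v,t)=t\,g(v/t)=t\,c(\phi(v)/t)+\alpha(v)$ then follows by homogeneity.

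For the necessity direction, the key step is precisely this factorization. Given $v,w$ with $\phi(v)=\phi(w)$, a direct bracket computation shows that $(v,1)$ and $(v-w,0)$ commute, since their bracket is $(\phi(v)-\phi(w),0)=0$. Applying linearity of $\zeta$ on the abelian subalgebra they span to the combination $(v,1)-(v-w,0)=(w,1)$ yields $g(w)=g(v)-\alpha(v-w)$, that is $h(w)=h(v)$. This is exactly the coset-invariance needed, so $c$ is well-defined on $W$ and the formula is established.

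For sufficiency I would verify the quasi-state property directly from \eqref{QSFormulaSemidirect1}. For commuting $(v,s),(w,t)$ the relation $s\phi(w)=t\phi(v)$ gives $\phi(v)/s=\phi(w)/t=:z$ whenever $s,t\neq0$, and then $\phi(av+bw)=(as+bt)z$, so the argument of $c$ in $\zeta(av+bw,as+bt)$ is again $z$; substituting, the $c$-terms combine linearly and the $\alpha$-term is linear by construction. The degenerate cases (one of $s,t$ equal to zero, or $as+bt=0$) are handled by the same computation, using $\phi(w)=0$ in the former and the vanishing of the cross term $c(z)(as+bt)$ in the latter.

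Finally, the continuity equivalence I would deduce from Lemma \ref{Sublinearity}. If $c$ is continuous and sublinear, continuity of $\zeta$ away from $t=0$ is immediate, and at a point $(v_0,0)$ one writes $\zeta(v,t)-\alpha(v)=c(\phi(v)/t)\,t$ and bounds it by $\frac{|c(z)|}{\|z\|}\,\|\phi(v)\|$ with $z=\phi(v)/t$. Conversely, if $\zeta$ is continuous, continuity of $c$ follows by composing with a linear section $\sigma:W\to V$ of $\phi$, and sublinearity of $c$ follows by applying Lemma \ref{Sublinearity} to the normalized quasi-state $\zeta-\alpha-\psi$, with $\psi(X):=\zeta(0,X)$, evaluated along $X=1$ and transported through $\sigma$ using $\|\sigma(z)\|\asymp\|z\|$. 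The hardest part will be the continuity analysis at the $t=0$ locus, where the formula changes shape: the delicate case is $\phi(v_0)=0$, for which $z=\phi(v)/t$ may either blow up or stay bounded as $(v,t)\to(v_0,0)$, and one must see that sublinearity is exactly the condition reconciling both behaviors (splitting according to whether $\|z\|$ remains bounded, where the factor $t\to0$ controls the product, or $\|z\|\to\infty$, where sublinearity does).
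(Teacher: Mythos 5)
Your proposal is correct and follows essentially the same route as the paper: reduction to the slice $t=1$ via homogeneity along lines, descent of $\zeta(\cdot,1)-\alpha$ through $\phi$ using commutation with kernel elements of $\phi$, a direct case analysis for sufficiency, and Lemma \ref{Sublinearity} for the continuity equivalence. The only cosmetic difference is that the paper first splits off the normalized part via Lemma \ref{SemidirectLemma} before carrying out these steps, whereas you subtract $\alpha$ directly.
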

\begin{proof} Assume first that $\zeta \in Q_{0}(\gfr)$ is normalized. Then $\zeta(v,0) = 0$ for every $v \in V$ and if $\widetilde{c}: V \to \R$ is given by $\widetilde{c}(v) := \zeta(v,1)$, then for every $v \in V$ and $t \neq 0$ we have
\[
\zeta(v,t) = \widetilde{c}(v/t)\cdot t.
\]
Moreover, since $\zeta$ is normalized we have $\widetilde{c}(0) = \zeta(0,1) = 0$. If $u \in U = \ker(\phi)$, then $(u, 0)$ is central in $\gfr_\phi$, hence
\[
\widetilde{c}(v+u) = \zeta(v+u, 1) = \zeta((v,1)+(u,0)) = \zeta(v,1)+\zeta(u,0) = \zeta(v,1) = \widetilde{c}(v). 
\]
It follows that $\widetilde{c}$ descends to a function on $V/U$. Since the latter space is isomorphic to $W$ via $\phi$, we conclude that there exists a function $c: W \to \R$ with $c(0) = 0$ such that
\begin{equation}\label{QSCandidate}
\zeta(v,t) = \left\{\begin{array}{ll}c(\phi(v)/t)\cdot t & t \neq 0\\ 0 & t= 0.\end{array}\right.
\end{equation}
Conversely, assume that $\zeta: \gfr \to \R$ is given by \eqref{QSCandidate} for some function $c: W \to \R$ with $c(0) = 0$. Then, by definition, $\zeta(v, 0) = \zeta(0,t) = 0$ for all $v\in V$ and $t\in \R$. Now assume that $(u, s)$ and $(v,t)$ commute, i.e. $s\phi(v) = t\phi(u)$. We claim that $\zeta(u,s) + \zeta(v,t) = \zeta(u+v, s+t)$. There are several cases:
\begin{enumerate}[\textsc{Case} 1.]
\item If $0 \not \in \{s,t, s+t\}$ then we have 
\[
\frac{\phi(u)}{s} = \frac{\phi(v)}{t} = \frac{\phi(u+v)}{s+t},
\]
and we deduce that
\begin{eqnarray*}
\zeta(u,s)+\zeta(v,t) &=& c(\phi(u)/s) \cdot s + c(\phi(v)/t)\cdot t = c(\phi(u+v)/(s+t)) \cdot s +  c(\phi(u+v)/(s+t)) \cdot t\\
&=& c(\phi(u+v)/(s+t)) \cdot (s+t) = \zeta(u+v, s+t).
\end{eqnarray*}
\item If $s = 0$, then the condition $s\phi(v) = t\phi(u)$ implies either $t=0$ or $\phi(u) = 0$. In the former case we have
$\zeta(u,s)+\zeta(v,t) = 0+0 = 0 = \zeta(u+v, s+t)$, and in the latter case we have
\[
\zeta(u,s)+\zeta(v,t) = 0+ c(\phi(v)/t)\cdot t  = c((0+\phi(v))/(0+t)) \cdot (0+t) = c((\phi(u)+\phi(v))/(s+t)) \cdot (s+t) =\zeta(u+v, s+t).
\]
\item If $t = 0$ then we can argue as in \textsc{Case 2}, since the roles of $s$ and $t$ are symmetric.

\item It remains to deal with the case $s=-t \neq 0$. In this case,
\[
\frac{\phi(u)}{s} = \frac{\phi(v)}{t},
\]
and thus
\begin{eqnarray*}
\zeta(u,s)+\zeta(v,t) &=& c(\phi(u)/s) \cdot s + c(\phi(v)/t)\cdot t = c(\phi(u)/s) \cdot s + c(\phi(u)/s)\cdot t \\
&=&  c(\phi(u)/s) \cdot (s+t) = 0 = \zeta(u+v, 0) = \zeta(u+v, s+t).
\end{eqnarray*}
\end{enumerate}
It follows that $\zeta$ is a normalized Lie quasi-state. We have thus shown that the normalized Lie quasi-states on $\gfr$ are exactly the functions of the form \eqref{QSCandidate}, where $c: W \to \R$ is an arbitrary function with $c(0) = 0$. 

This finishes the classification of normalized Lie quasi-states on $\gfr$. As for general Lie quasi-states, by Lemma \ref{SemidirectLemma} these are of the form $\zeta(v,t) =  \zeta_0(v,t)+\alpha(v)+ c_0\cdot t$, where $\zeta_0 \in Q_0(\gfr)$ is normalized, $\alpha \in V^*$ denotes the central character of $\zeta$ and $c_0 \in \R$ is given by $c_0 = \zeta(0,1)$. Since $\zeta_0$ is of the form \eqref{QSCandidate} for some $c: W \to \R$ with $c(0) = 0$, it follows that $\zeta$ is of the form described in the statement of the theorem.

As for continuity, if the quasi-state $\zeta$ given by \eqref{QSFormulaSemidirect1} is continuous, then in particular the map $v \mapsto c(\phi(v)) = \zeta(v, 1)$ is continuous, whence $c$ is continuous. Moreover, continuity of $\zeta$ at $t = 0$ implies sublinearity of $c$ by Lemma \ref{Sublinearity}. Conversely, if $c$ is continuous, then $\zeta$ is obviously continuous on $\gfr \setminus(V \times \{0\})$, and if $c$ is moreover sublinear, then $\zeta$ is continuous on all of $\gfr$.
\end{proof}
In the sequel we write $\zeta_{\alpha, c}$ for the Lie quasi-state on $\gfr$ given by equation \eqref{QSFormulaSemidirect1}. If we denote by $C_{sl}(W) < C(W)$ the space of sub-linear continuous functions on $W$ then we have:
\begin{corollary} If $\gfr = \gfr_\phi$ for some $\phi \neq 0$ and $W = {\rm im}(\phi)$ then the map
\[
V^*\oplus C_{sl}(W) \to \mathcal Q(\gfr), \quad (\alpha, c)\mapsto \zeta_{\alpha,c}.
\]
is an isomorphism and thus ${\mathcal Q}(\gfr)$ is infinite-dimensional.
\end{corollary}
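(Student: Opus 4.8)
The plan is to read everything off Theorem~\ref{SLTheorem}, which has already done the substantial analytic work; the corollary is essentially a repackaging of that classification, together with one genuinely new (but elementary) observation, namely that $C_{sl}(W)$ is infinite-dimensional. Concretely, I would verify in turn that the map $(\alpha,c)\mapsto \zeta_{\alpha,c}$ is well-defined into $\mathcal Q(\gfr)$, linear, injective, and surjective, and then exhibit an infinite-dimensional subspace of $C_{sl}(W)$.

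First I would check well-definedness. For $(\alpha,c)\in V^*\oplus C_{sl}(W)$ the function $\zeta_{\alpha,c}$ defined by \eqref{QSFormulaSemidirect1} is a Lie quasi-state by the first part of Theorem~\ref{SLTheorem}, and it is continuous precisely because $c$ is continuous and sublinear, by the last part of that theorem; hence $\zeta_{\alpha,c}\in\mathcal Q(\gfr)$. Linearity of $(\alpha,c)\mapsto\zeta_{\alpha,c}$ is immediate from \eqref{QSFormulaSemidirect1}: for fixed $(v,t)$ the assignment $\alpha\mapsto\alpha(v)$ is linear, and for $t\neq 0$ the assignment $c\mapsto c(\phi(v)/t)\cdot t$ is a scaled evaluation of $c$, hence linear.

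For injectivity I would suppose $\zeta_{\alpha,c}\equiv 0$. Evaluating at $t=0$ gives $\alpha(v)=0$ for all $v\in V$, so $\alpha=0$. Evaluating then at $t=1$ gives $c(\phi(v))=0$ for all $v\in V$; since $\phi(v)$ ranges over all of $W=\im(\phi)$ as $v$ ranges over $V$, we conclude $c\equiv 0$. Surjectivity is exactly the content of Theorem~\ref{SLTheorem}: every continuous Lie quasi-state on $\gfr$ has some canonical character $\alpha\in V^*$ and is of the form \eqref{QSFormulaSemidirect1} for a continuous sublinear function $c$, i.e. equals $\zeta_{\alpha,c}$ with $c\in C_{sl}(W)$. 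This establishes the isomorphism.

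Finally, to conclude infinite-dimensionality, I would exhibit an infinite-dimensional subspace of $C_{sl}(W)$. Since $\phi\neq 0$ we have $W=\im(\phi)\neq\{0\}$, so $W\cong\R^d$ with $d\geq 1$. Every bounded continuous function $c$ on $W$ is sublinear, since $|c(w)|/\|w\|\leq(\sup|c|)/\|w\|\to 0$ as $w\to\infty$; thus the infinite-dimensional space $C_b(W)$ of bounded continuous functions on $W$ embeds into $C_{sl}(W)$, and the claim follows. I do not expect any real obstacle here: all of the analytic content was already absorbed into Theorem~\ref{SLTheorem}, and the only mildly new point is this last one, which is routine.
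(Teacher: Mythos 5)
Your proof is correct and follows exactly the route the paper intends: the corollary is stated without proof as an immediate consequence of Theorem~\ref{SLTheorem}, and your verification of well-definedness, linearity, injectivity, and surjectivity is precisely the unpacking of that theorem, with the observation that bounded continuous functions on $W\neq\{0\}$ form an infinite-dimensional subspace of $C_{sl}(W)$ supplying the final claim. No gaps.
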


\subsection{The Ad-invariant case} 
We keep the notation of the previous subsection, in particular $V$ is an abelian Lie algebra, $\phi \in {\rm End}(V) \setminus\{0\}$ and
$\L g = \L g_{\phi} = V \rtimes \R$ with ${\rm ad}(1)|_V = \phi$. Moreover, we abbreviate $W := {\rm im}(\phi)$ and given $\alpha \in V^*$ and $c \in C_{sl}(W)$ we denote by $\zeta_{\alpha, c}$ the Lie quasi-state on $\gfr$ given by \eqref{QSFormulaSemidirect1}. The following theorem classifies those quasi-states $\zeta_{\alpha,c}$ which are invariant under the action of the adjoint group of $\gfr$.
\begin{theorem}\label{SolvableAdInv}
Let $\gfr = \gfr_\phi$ as above. Given $\alpha \in V^*$ and $c \in C_{sl}(W)$, the Lie quasi-state $\zeta_{\alpha, c}$ is ${\rm Ad}$-invariant if and only if the following hold:
\begin{enumerate}[(i)]
\item $\im \phi \subset \ker \alpha$.
\item $c: W \to \R$ descends to a function $c : W/\im \phi^2 \ra \bR$.
\end{enumerate}
\end{theorem}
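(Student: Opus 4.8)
The plan is to reduce Ad-invariance of $\zeta_{\alpha,c}$ to invariance under an explicit generating set of the adjoint group of $\gfr_\phi$, and then to read off conditions (i) and (ii) by inspecting the slices $t=0$ and $t\neq 0$ separately. First I would describe the adjoint action through the generators $\exp(\ad Z)$ of the adjoint group. Since $\ad(w,r)(v,t)=(r\phi(v)-t\phi(w),0)$, we have $\ad(w,r)=r\,\ad(0,1)+\ad(w,0)$, so the elements $\ad(0,1)$ and $\ad(w,0)$, $w\in V$, span $\ad(\gfr)$; hence the adjoint group is generated by the associated one-parameter subgroups. The operator $\ad(0,1)$ acts by $(v,t)\mapsto(\phi(v),0)$, so its exponential is $A_r\colon(v,t)\mapsto(\exp(r\phi)v,\,t)$, while $\ad(w,0)$ is $2$-step nilpotent with $(v,t)\mapsto(-t\phi(w),0)$, giving $\exp(\ad(w,0))=G_w\colon(v,t)\mapsto(v-t\phi(w),\,t)$. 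As $\phi(w)$ ranges over $W=\im\phi$, the shears $G_w$ realize exactly the translations $(v,t)\mapsto(v-tu,t)$ with $u\in W$. Thus $\zeta_{\alpha,c}$ is Ad-invariant if and only if it is invariant under all $A_r$ and all $G_w$.

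Next I would analyze the slice $t=0$, where $\zeta_{\alpha,c}(v,0)=\alpha(v)$ is linear. Here $G_w$ acts trivially, whereas $A_r$-invariance reads $\alpha(\exp(r\phi)v)=\alpha(v)$ for all $r,v$; comparing first-order terms in $r$ gives $\alpha\circ\phi=0$, i.e. condition (i), and conversely $\alpha\circ\phi=0$ makes $\alpha$ constant along every orbit of $\exp(r\phi)$. So invariance on the $t=0$ slice is precisely condition (i). Assuming (i), so that $\alpha|_W=0$, I would turn to the slice $t\neq 0$ and compute $\zeta_{\alpha,c}(G_w(v,t))$. Using $\phi(v-t\phi(w))=\phi(v)-t\phi^2(w)$ together with $\alpha(\phi(w))=0$, the invariance equation collapses to $c(\phi(v)/t-\phi^2(w))=c(\phi(v)/t)$. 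As $(v,t)$ ranges over $V\times(\R\setminus\{0\})$ the argument $\phi(v)/t$ sweeps out all of $W$, and $\phi^2(w)$ sweeps out $\im\phi^2$, so this says exactly that $c$ is invariant under translation by $\im\phi^2$, i.e. descends to $W/\im\phi^2$: condition (ii).

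It then remains, for the converse direction, to verify that conditions (i) and (ii) also force invariance under $A_r$ on the slice $t\neq 0$; this is the step I expect to be the main (if mild) obstacle, since it is the only point where the two conditions must interact rather than being read off directly from a single slice. Here $A_r$-invariance amounts to $c(\exp(r\phi)(\phi(v)/t))=c(\phi(v)/t)$, and the key observation is that for $\xi\in W$ one has
\[
\exp(r\phi)\xi-\xi=\phi\Bigl(\textstyle\sum_{k\ge 1}\tfrac{r^k}{k!}\phi^{k-1}\xi\Bigr)\in\phi(W)=\im\phi^2,
\]
because $W$ is $\phi$-invariant; hence $A_r$-invariance on $t\neq 0$ follows from (ii). Collecting the equivalences on both slices and using that $\{A_r\}$ and $\{G_w\}$ generate the adjoint group then yields the theorem in both directions.
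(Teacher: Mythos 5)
Your proof is correct, and in the forward direction it takes a genuinely different route from the paper's. The paper computes the full adjoint action $\exp(\ad(w,s))(v,t)=\bigl(\exp(s\phi)v-\tfrac{t}{s}(\exp(s\phi)w-w),t\bigr)$ and works with the resulting single invariance identity, in which the $\alpha$- and $c$-contributions are entangled; to extract condition (i) it must take a limit $s\to 0$, then rescale $w\mapsto rw$ and invoke sublinearity of $c$ to kill the $c$-term as $r\to\infty$. Your reduction to the generating one-parameter subgroups $A_r$ and $G_w$ decouples the two conditions: (i) is read off from the slice $t=0$, where $c$ never appears, and (ii) from the $G_w$-action on $t\neq 0$. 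As a consequence your argument uses neither continuity nor sublinearity of $c$, so it actually establishes the equivalence for an arbitrary function $c\colon W\to\R$ — a mild strengthening of the stated theorem. The converse direction is essentially the paper's: both rest on the observations that $\exp(s\phi)$ moves vectors of $V$ within their $\im\phi$-coset and vectors of $W$ within their $\im\phi^2$-coset (your identity $\exp(r\phi)\xi-\xi\in\phi(W)=\im\phi^2$ for $\xi\in W$ is exactly the paper's $\exp(s\phi)\phi(v)\in\phi(v)+\im\phi^2$). The one step you should spell out when writing this up is the hinge of the whole argument: the adjoint group is connected, hence generated by the one-parameter subgroups $\exp(r\,\ad(0,1))$ and $\exp(s\,\ad(w,0))$, because their infinitesimal generators span $\ad(\gfr)$. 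This is standard, but invariance under a generating set is what licenses checking only $A_r$ and $G_w$.
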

\begin{proof} The function $\zeta_{\alpha,c}$ is ${\rm Ad}$-invariant if and only if for all $(w,s), (v,t) \in \gfr$ we have
\begin{equation}\label{AdInvNew}
\zeta_{\alpha, c}(\exp(\ad(w,s))(v,t)) = \zeta_{\alpha, c}(v,t)
\end{equation}
In fact, by continuity of $\zeta_{\alpha, c}$ we may assume that $s \neq 0 \neq t$. Note that  
\[
\ad(w,s)(v,t) = \left(\begin{matrix} s\phi & -\phi(w)\\0&0\end{matrix}\right) \cdot \left(\begin{matrix}v\\ t\end{matrix}\right),
\]
and hence for all $k >0$,
\[
\ad(w,s)^k(v,t) = \left(\begin{matrix} s\phi & -\phi(w)\\0&0\end{matrix}\right)^k \cdot \left(\begin{matrix}v\\ t\end{matrix}\right) =  \left(\begin{matrix} s^k\phi^k & -s^{k-1}\phi^k(w)\\0&0\end{matrix}\right) \cdot \left(\begin{matrix}v\\ t\end{matrix}\right) = 
\left(\begin{matrix}s^k\phi^k(v)-ts^{k-1}\phi^k(w)\\0 \end{matrix}\right).
\]
We deduce that 
\begin{eqnarray*}
\exp(\ad(w,s))(v,t) &=& \left(\begin{matrix} 1+\sum_{k=1}^\infty \frac{1}{k!} s^k\phi^k & -\sum_{k=1}^\infty \frac{1}{k!} s^{k-1}\phi^k(w)\\0&1\end{matrix}\right) \cdot \left(\begin{matrix}v\\ t\end{matrix}\right) \\
&=& \left(\begin{matrix} \exp(s\phi) & \frac{1}{s}(\exp(s\phi)w - w)\\0&1\end{matrix}\right) \cdot \left(\begin{matrix}v\\ t\end{matrix}\right) \\
&=& \Big(\exp(s\phi)v - \frac{t}{s}\big(\exp(s\phi)w-w\big),t\Big), 
\end{eqnarray*}
and thus the condition \eqref{AdInvNew} amounts to
\begin{eqnarray}\label{AdInvNew2}
&&c\Big(\frac{\exp(s\phi)\phi(v)}{t} - \frac{1}{s}\Big(\exp(s\phi)\phi(w)-\phi(w)\Big)\Big)\cdot t + \alpha(\exp(s\phi)v) - t \cdot \alpha\Big(\frac{1}{s}\Big(\exp(s\phi)w-w\Big)\Big) \\
&=& \nonumber c\Big(\frac{\phi(v)}{t}\Big)\cdot t + \alpha(v).
\end{eqnarray}
We now show that \eqref{AdInvNew2} implies (i) and (ii) above. Specialize \eqref{AdInvNew2} to $t:=1$ and then let $s \to 0$. Since
\[
\lim_{s \ra 0} \frac{\exp(s\phi)u - u}{s} = \phi(u) \quad \textrm{for all $u \in V$}, 
\]
we obtain
\begin{equation}\label{AdInvNew3}
c\big(\phi(v) - \phi^2(w)\big) - \alpha(\phi(w)) = c(\phi(v)).
\end{equation}
Replacing $w$ by $rw$ for some $r > 0$ and dividing by $r$ we obtain
\[
\frac 1 r \cdot c\big(\phi(v) - r\cdot \phi^2(w)\big) - \alpha(\phi(w)) = \frac{c(\phi(v))}{r}.
\]
If $\phi^2(w) \neq 0$, then $\frac 1 r \|\phi(v) - r\cdot \phi^2(w)\| \to \|\phi^2(w)\|$ and thus sublinearity of $c$ implies that the first term converges to $0$ as $r \to \infty$. This in term implies $\alpha(\phi(w)) = 0$. If, on the other hand, $\phi^2(w) = 0$, then \eqref{AdInvNew3} simplifies to $\alpha(\phi(w)) = \alpha(v)$ for all $v \in V$ and choosing $v =0$ yields $\alpha(\phi(w)) = 0$ also in this case. We have thus established (i). This in turn allows us to rewrite \eqref{AdInvNew3}  as
\begin{equation}\label{AdInvNew4}
c\big(\phi(v) - \phi^2(w)\big)= c(\phi(v)).
\end{equation}
which is (ii). Hence we have seen that Ad-invariance implies both (i) and (ii).

Towards the converse implication, observe that
\[
\exp(s\phi)\phi(v) \in \phi(v) + \im \phi^2
\]
and similarly $\exp(s\phi)\phi(w) \in \phi(w) + \im \phi^2$. It follows that
\[\frac{\exp(s\phi)\phi(v)}{t} - \frac{1}{s}\Big(\exp(s\phi)\phi(w)-\phi(w)\Big) \in \frac{\phi(v)}{t} + \im \phi^2
\]
and thus
\[
c\left(\frac{\exp(s\phi)\phi(v)}{t} - \frac{1}{s}\Big(\exp(s\phi)\phi(w)-\phi(w)\Big)\right)= c\left(\frac{\phi(v)}{t} \right).
\]
for any $c$ satisfying (ii). Thus if (ii) holds, then the Ad-invariance condition \eqref{AdInvNew2} simplifies into
\[\alpha\left(\exp(s\phi)(v) -\frac{t}{s}\Big(\exp(s\phi)w-w\Big)\right)=\alpha(v).\]
Since
\[
\exp(s\phi)(v) -\frac{t}{s}\Big(\exp(s\phi)w-w\Big) \in v + \im \phi,
\]
this follows from (i), and hence (i) and (ii) together imply Ad-invariance.
\end{proof}
The theorem allows us to determine whether the algebra $\gfr_\phi$ is Lie quasi-state rigid. We can summarize the result as follows:
\begin{corollary}\label{SplittingNew} The following are equivalent for the Lie algebra $\gfr = \gfr_\phi$:
\begin{enumerate}[(i)]
\item $\gfr_\phi$ is not Lie quasi-state rigid.
\item $\mathcal Q_{Ad}(\gfr)$ is infinite-dimensional (and, in fact, of uncountable dimension).
\item $\im \phi^2 \subsetneq \im \phi$.
\item There exists a $\phi$-invariant splitting $V = V_0 \oplus V_1$ with $n := \dim V_0 \geq 2$ such that in some suitable basis of $V_0$ the restriction $\phi|_{V_0}$ is represented by the matrix
\begin{equation}\label{phin}
\phi_n = \left( 
\begin{matrix}
0&1& &&&&\\
&0&1&&&\\
&&\ddots&\ddots&&\\
&&&0&1\\
&&&&0
\end{matrix}
\right)
\end{equation}
\item $\gfr_\phi$ surjects onto the three-dimensional Heisenberg algebra $\L h_3$.
\end{enumerate}
\end{corollary}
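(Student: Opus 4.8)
The plan is to read off everything from the classification of Ad-invariant continuous Lie quasi-states furnished by Theorem \ref{SolvableAdInv}, and then to close a cycle of implications. By that theorem, together with Theorem \ref{SLTheorem}, the Ad-invariant continuous Lie quasi-states on $\gfr_\phi$ are exactly the functions $\zeta_{\alpha,c}$ for which $\im\phi\subset\ker\alpha$ and $c$ descends to a continuous sublinear function on the quotient $W/\im\phi^2$; since $(\alpha,c)\mapsto\zeta_{\alpha,c}$ is linear and injective, this identifies $\mathcal Q_{\rm Ad}(\gfr)$ with $(V/\im\phi)^*\oplus C_{sl}(W/\im\phi^2)$. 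I would first record that $\zeta_{\alpha,c}$ is linear precisely when $c$ is constant: indeed $\zeta_{\alpha,c}(v,t)-\zeta_{\alpha,c}(v,0)-\zeta_{\alpha,c}(0,t)=\big(c(\phi(v)/t)-c(0)\big)\,t$ vanishes identically iff $c$ is constant on $W=\im\phi$. Because $\gfr_\phi$ is solvable it contains no simple Hermitian subalgebra, so by the remarks in the introduction $\mathcal Q_{\rm qm}(\gfr)=\Hom(\gfr,\R)$ is finite-dimensional and rigidity of $\gfr_\phi$ is equivalent to every Ad-invariant continuous Lie quasi-state being linear.

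With this dictionary the equivalence of (i), (ii) and (iii) is immediate. If $\im\phi^2=\im\phi$ then $W/\im\phi^2=0$, every admissible $c$ is constant, every $\zeta_{\alpha,c}$ is linear, and $\gfr$ is rigid; this is the negation of (i), giving (i)$\Rightarrow$(iii) by contraposition. Conversely, if $\im\phi^2\subsetneq\im\phi$ then $W/\im\phi^2$ is a nonzero finite-dimensional space, and the functions $u\mapsto\|u\|^s$ for $s\in(0,1)$ are continuous, sublinear, and linearly independent (restrict to a ray and use linear independence of distinct powers), so they pull back to an uncountable-dimensional family of non-constant admissible $c$'s, hence of non-linear Ad-invariant quasi-states; this proves (iii)$\Rightarrow$(ii). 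Finally (ii)$\Rightarrow$(i) holds because $\mathcal Q_{\rm qm}(\gfr)$ is finite-dimensional, so an infinite-dimensional $\mathcal Q_{\rm Ad}(\gfr)$ cannot equal it.

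For (iii)$\Leftrightarrow$(iv) I would pass to the Fitting decomposition $V=V_{\rm nil}\oplus V_{\rm inv}$ into $\phi$-invariant subspaces on which $\phi$ is nilpotent, respectively invertible. As $\phi|_{V_{\rm inv}}$ is invertible it contributes equally to $\im\phi$ and $\im\phi^2$, so (iii) holds iff $\im(\phi|_{V_{\rm nil}})^2\subsetneq\im(\phi|_{V_{\rm nil}})$, which for a nilpotent operator happens iff its real Jordan form has a block of size $\geq 2$. Extracting one such block as $V_0$ and letting $V_1$ be the sum of the remaining blocks together with $V_{\rm inv}$ yields (iv); conversely a splitting as in (iv) forces $\im\phi^2\subsetneq\im\phi$ since the size-$n$ block already satisfies $\im\phi_n^2\subsetneq\im\phi_n$ and the image splits along the $\phi$-invariant decomposition.

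For (iv)$\Leftrightarrow$(v) I would use that the ideals of $\gfr_\phi$ contained in $V$ are exactly the $\phi$-invariant subspaces, with quotient again of the form $\gfr_{\bar\phi}$, and recall the identification $\L h_3\cong\gfr_{\phi_2}$. Given (iv) with a Jordan basis $u_1,\dots,u_n$ of $V_0$ (so $\phi u_j=u_{j-1}$, with $u_0:=0$), the subspace $I=V_1\oplus\langle u_1,\dots,u_{n-2}\rangle$ is a $\phi$-invariant ideal of codimension $2$ whose quotient endomorphism is $\phi_2$, producing a surjection $\gfr_\phi\twoheadrightarrow\L h_3$. For the converse I would compare lower central series: the second and third terms of the lower central series of $\gfr_\phi$ are $\im\phi$ and $\im\phi^2$, while for $\L h_3$ they are the one-dimensional centre $Z(\L h_3)$ and $\{0\}$. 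A surjective homomorphism carries each term of the lower central series onto the corresponding term, so any surjection $\gfr_\phi\twoheadrightarrow\L h_3$ would send $\im\phi$ onto $Z(\L h_3)$ and $\im\phi^2$ onto $\{0\}$; were $\im\phi^2=\im\phi$ this is impossible, so (v) forces (iii) and hence (iv). The only genuinely delicate point is the initial reduction via Theorem \ref{SolvableAdInv}, in particular the verification that "$c$ sublinear on $W$ descending to $W/\im\phi^2$" coincides with "$c$ sublinear on the quotient," where the interplay between sublinearity and the quotient norm must be handled with care; the remaining steps are bookkeeping with the real Jordan form and the lower central series.
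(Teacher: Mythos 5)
Your proof is correct, and for the equivalences (i)--(iv) it follows essentially the same route as the paper: the dictionary supplied by Theorem \ref{SolvableAdInv}, the observation that $\zeta_{\alpha,c}$ is linear precisely when $c$ is constant, an explicit uncountable linearly independent family in $C_{sl}$ of the nonzero quotient $\im\phi/\im\phi^2$ (the paper merely asserts uncountable dimension; your powers $\|u\|^s$ make it explicit), and the real Jordan form for (iii)$\Leftrightarrow$(iv). Where you genuinely diverge is in attaching (v) to the cycle. The paper proves (v)$\Rightarrow$(ii) by pulling back quasi-states along a surjection $p\colon\gfr_\phi\to\L h_3$, using its functoriality lemma (Lemma \ref{QuotientMaps}) to embed the already-infinite-dimensional $\mathcal Q_{\rm Ad}(\L h_3)$ into $\mathcal Q_{\rm Ad}(\gfr_\phi)$; you instead prove (v)$\Rightarrow$(iii) by a purely Lie-algebraic comparison of lower central series, $C^2(\gfr_\phi)=\im\phi$ and $C^3(\gfr_\phi)=\im\phi^2$ versus $C^2(\L h_3)=\L z(\L h_3)\neq 0=C^3(\L h_3)$. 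Your version makes the block (iii)$\Leftrightarrow$(iv)$\Leftrightarrow$(v) self-contained and independent of quasi-state theory, while the paper's version isolates the general principle (any surjection onto $\L h_3$ obstructs rigidity for an arbitrary Lie algebra) that it also invokes in the introduction. Your (iv)$\Rightarrow$(v) via the single codimension-two ideal $V_1\oplus\langle u_1,\dots,u_{n-2}\rangle$ compresses the paper's chain of successive central quotients $\gfr_{\phi_n}\to\gfr_{\phi_{n-1}}\to\dots\to\gfr_{\phi_2}=\L h_3$ into one step; both are fine, since ideals of $\gfr_\phi$ contained in $V$ are exactly the $\phi$-invariant subspaces. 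The sublinearity-versus-quotient-norm issue you flag at the end is real but harmless: a coset of bounded quotient norm admits a representative of comparably bounded norm in $W$, and the quotient norm is dominated by the norm upstairs, so sublinearity passes in both directions.
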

Note that the equivalence (i)$\Leftrightarrow$(v) is precisely Theorem \ref{MainTheorem2} from the introduction. The proof of Corollary \ref{SplittingNew} will make use of some basic functoriality properties of Lie quasi-states which we list in the following lemma:
\begin{lemma}\label{QuotientMaps} Let $p: \L g \to \L q$ be a surjective Lie algebra homomorphism and let $\zeta \in \mathcal Q(\L q)$ be a quasi-state.
\begin{enumerate}[(i)]
\item The functions $p^*\zeta: \gfr \to \R$ given by
\[
p^*\zeta(X) := \zeta(p(X))
\]
is a Lie quasi-state on $\L g$. 
\item If $\zeta$ is Ad-invariant, then so is $p^*\zeta$.
\item If $\zeta  \in Q_{qm}(\L q)$, then $p^*\zeta \in Q_{qm}(\gfr)$
\end{enumerate}
\end{lemma}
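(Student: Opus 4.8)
The plan is to dispatch the three assertions in order, using throughout that a surjective Lie algebra homomorphism $p \colon \gfr \to \L q$ is in particular a linear map which preserves the bracket, hence carries commuting pairs to commuting pairs and intertwines the two adjoint actions. For (i), if $X, Y \in \gfr$ satisfy $[X,Y] = 0$, then $[p(X), p(Y)] = p([X,Y]) = 0$, so $p(X)$ and $p(Y)$ commute in $\L q$; applying linearity of $\zeta$ on this commuting pair gives $p^*\zeta(aX + bY) = a\, p^*\zeta(X) + b\, p^*\zeta(Y)$, which is exactly the quasi-state condition for $p^*\zeta$. Continuity is automatic, since $p$ is linear (thus continuous) and $p^*\zeta = \zeta \circ p$. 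For (ii), the essential point is the intertwining identity $p \circ \ad(Y) = \ad(p(Y)) \circ p$, immediate from $p([Y,X]) = [p(Y), p(X)]$; exponentiating it yields $p(\Ad(g)X) = \Ad(\bar g)(p(X))$, where $\bar g$ denotes the image of $g$ in the adjoint group of $\L q$. Ad-invariance of $\zeta$ then gives $p^*\zeta(\Ad(g)X) = \zeta(\Ad(\bar g)\,p(X)) = \zeta(p(X)) = p^*\zeta(X)$, so $p^*\zeta$ is Ad-invariant.

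The substantive part is (iii). The plan is to integrate $p$ and to pull back, along the resulting group homomorphism, a homogeneous quasimorphism witnessing $\zeta \in Q_{qm}(\L q)$. Let $G$ be the $1$-connected Lie group with Lie algebra $\gfr$ and $\widetilde H$ the $1$-connected Lie group with Lie algebra $\L q$. Since $G$ is simply connected, $p$ integrates to a Lie group homomorphism $P \colon G \to \widetilde H$ with $P \circ \exp_G = \exp_{\widetilde H} \circ\, p$. By the definition of $Q_{qm}(\L q)$, $\zeta$ is the directional derivative of a homogeneous quasimorphism on some group with Lie algebra $\L q$; using the fact that homogeneous quasimorphisms lift to and descend along covering maps (see \cite{BSH2}, as in the proof of Theorem \ref{GlobalThm}), we may assume this quasimorphism is defined on $\widetilde H$, say $f \colon \widetilde H \to \R$ with $\zeta = f \circ \exp_{\widetilde H}$.

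Finally I would set $F := f \circ P \colon G \to \R$ and check that it does the job. Because $P$ is a homomorphism, $F(gh) - F(g) - F(h) = f(P(g)P(h)) - f(P(g)) - f(P(h))$ is bounded in absolute value by $D(f) < \infty$, and $F(g^n) = f(P(g)^n) = n\, f(P(g)) = n\, F(g)$, so $F$ is a homogeneous quasimorphism on $G$ with defect $D(F) \le D(f)$. Its directional derivative is $F(\exp_G(X)) = f(\exp_{\widetilde H}(p(X))) = \zeta(p(X)) = p^*\zeta(X)$, whence $p^*\zeta \in Q_{qm}(\gfr)$. The only genuine obstacle lies in (iii): reconciling the ``some Lie group'' appearing in the definition of $Q_{qm}$ with the specific target $\widetilde H$ into which $p$ integrates. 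This is precisely what the lifting/descending property of quasimorphisms under covering maps resolves; parts (i) and (ii) are routine once the homomorphism property of $p$ is used.
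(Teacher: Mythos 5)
Your proposal is correct and follows essentially the same route as the paper: part (i) via $p([X,Y])=0$, part (ii) via the intertwining $p\circ\Ad(\exp(X))=\Ad(\exp(p(X)))\circ p$, and part (iii) by integrating $p$ to a homomorphism $\widehat{p}$ of the $1$-connected groups and pulling back the quasimorphism, so that $p^*\zeta$ is the directional derivative of $\widehat{p}^*f$. Your extra care in reducing the ``some Lie group'' in the definition of $Q_{qm}$ to the simply connected model, and in verifying that $\widehat{p}^*f$ has finite defect and is homogeneous, only makes explicit what the paper leaves implicit.
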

\begin{proof} (i) If $X, Y \in \gfr$ with $[X, Y] = 0$, then $[p(X), p(Y)] = p([X, Y]) = 0$ and thus
\[
p^*\zeta(X+Y) = \zeta(p(X) + p(Y)) = \zeta(p(X)) + \zeta(p(Y)) = p^*\zeta(X) + p^*\zeta(Y),
\]
and $\R$-linearity is obvious.

(ii) Assume that $\zeta$ is Ad-invariant. Then for all $X, Y \in \gfr$ we have
\begin{eqnarray*}
p(\Ad(\exp(X))(Y)) &=& p(\exp(\ad(X))(Y)) \\
&=&p\left( \sum_{k=0}^\infty \frac{1}{k!} \ad(X)^k(Y)\right)\\
&=&\sum_{k=0}^\infty \frac{1}{k!} \ad(p(X))^k(p(Y))\\
&=&\exp(\ad(p(X))(p(Y))\\
&=&\Ad(\exp(p(X)))(p(Y))
\end{eqnarray*}
and thus
\[
p^*\zeta(\Ad(\exp(X))(Y)) = \zeta(p(\Ad(\exp(X)(Y))  ) =\zeta(\Ad(\exp(p(X)))(p(Y))) = \zeta(p(Y)) = p^*\zeta(Y),
\]
which implies Ad-invariance of $p^*\zeta$.

(iii) Denote by $Q$ and $G$ the $1$-connected Lie groups associated with $\L q$ and $\L g$ respectively and denote by $\widehat{p}: G \to Q$ the lift of $p$. If $\zeta  \in Q_{qm}(\L q)$ is the directional derivative of a homogeneous quasimorphism $f: Q \to \R$, then 
\[
p^*\zeta = p^*(f \circ \exp) = f\circ \exp \circ p = f \circ \widehat{p} \circ \exp = \widehat{p}^*f \circ \exp, 
\]
whence $p^*\zeta$ is the directional derivative of $\widehat{p}^*f$.
\end{proof}
\begin{proof}[Proof of Corollary \ref{SplittingNew}] We first show the equivalence of the first three conditions:

(i) $\Rightarrow$ (iii): Assume $\im(\phi^2) = \im(\phi)$ and let $\zeta_{\alpha, c} \in \mathcal Q_{Ad}(\gfr)$. Theorem \ref{SolvableAdInv} then implies that $c$ is constant, whence $\zeta_{\alpha, c}$ is linear. This show that $\gfr$ is Lie quasi-state rigid.

(iii) $\Rightarrow$ (ii): If (iii) holds, then $\dim(\im \phi/\im \phi^2)>0$, whence $C_{sl}(\im \phi/\im \phi^2)$ is of uncountable dimension, and the map $c\mapsto \zeta_{0, c}$ defines a linear embedding of $C_{sl}(\im \phi/\im \phi^2)$ into $\mathcal Q_{\rm Ad}(\gfr_\phi)$.

(ii) $\Rightarrow$ (i): We already observed in the introduction that if $\gfr$ is quasi-state rigid, then $\dim \mathcal Q_{\rm Ad}(\gfr_\phi) < \infty$.

On the other hand, the equivalence (iii) $\Leftrightarrow$ (iv) is an immediate consequence of the real Jordan decomposition of $\phi$. Namely, $\im \phi^2 \subsetneq \im \phi$ if and only if $\phi$ contains a nilpotent Jordan block of size $\geq 2$. We have thus established equivalence of the first four properties. 

Before we turn to condition (v) we observe that if $\phi_2 \in {\rm End}(\R^2)$ is given by
\[
\phi_2 = \left(\begin{matrix} 0 & 1\\ 0 &0 \end{matrix}\right),
\]
then $\gfr_{\phi_2}$ is precisely the three-dimensional Heisenberg algebra $\L h_3$. It thus follows from the implication (iv)$\Rightarrow$(ii), that $\mathcal Q_{Ad}(\L h_3)$ is infinite-dimensional. Now we can establish the remaining implications:

(v) $\Rightarrow$(ii): Assume $p: \gfr \to \L h_3$ is a surjection. By Lemma \ref{QuotientMaps}, $p$ induces a map $p^*:  \mathcal Q_{Ad}(\L h_3) \to  \mathcal Q_{Ad}(\L g)$, and since $p$ is surjective, this map is injective. It follows that $\dim \mathcal Q_{Ad}(\L g) \geq \dim \mathcal Q_{Ad}(\L h_3) = \infty$.

(iv) $\Rightarrow$ (v): Assume that $\gfr_\phi$ satisfies (iv) and let $V_0$, $V_1$ be as described in (iv). Then $V_1 \lhd \gfr_\phi$ is an ideal, and $\gfr_\phi/V_1 \cong \gfr_{\phi_n}$, where $\phi_n \in {\rm End}(\R^n)$ is given as in (iv). By definition the Lie algebra $\gfr_{\phi_n}$ has a basis of the form $(X, Y_1, \dots, Y_n)$ with bracket relations
\[
[X, Y_1] = Y_2,\; \dots, \; [X, Y_{n-1}] = Y_n,\; [X, Y_n] = 0,\; [Y_i, Y_j] = 0\quad(i,j = 1, \dots, n).
\]
In this basis the center $\L z(\gfr_{\phi_n})$ of $\gfr_{\phi_n}$ is given by $\L z(\gfr_{\phi_n}) = \R \cdot Y_n$, and we have $\gfr_{\phi_n}/\L z(\gfr_{\phi_n})  = \gfr_{\phi_{n-1}}$. Thus if $\gfr_{\phi}$ satisfies (iv), then we have a chain of surjections
\[
\gfr_{\phi} \to \gfr_{\phi_n} \to \gfr_{\phi_{n-1}} \to \dots \to \gfr_{\phi_3} \to \gfr_{\phi_2}  = \L h_3,
\]
and thus $\gfr_\phi$ surjects onto $\L h_3$. This finishes the proof.
\end{proof}

\begin{remark} 
Recall that if the space of homogeneous quasimorphisms modulo homomorphisms on some group $G$ is infinite-dimensional, then its dimension is at least the cardinality of the continuum. Indeed, this follows from the fact that the second bounded cohomology of a group is a Banach space with respect to the Gromov norm (and thus any Hamel basis has either finite or uncountable cardinality). This should be compared to the equivalence (i)$\Leftrightarrow$(ii) above, which allows the even stronger conclusion that the space $\mathcal Q_{\rm Ad}(\gfr_\phi)/{\rm Hom}(\gfr_\phi, \R)$ is of uncountable dimension as soon as it is non-trivial. We do not know whether for an arbitrary Lie algebra $\gfr$ the dimension of $\mathcal Q_{\rm Ad}(\gfr)/{\rm Hom}(\gfr, \R)$ can be countable-dimensional. For a solvable Lie algebra $\gfr$ we do not even know whether it can be finite-dimensional without being $0$.
\end{remark}
\subsection{Low-dimensional examples}\label{SolvableExample}
Let us summarize what the results obtained so far imply for low-dimensional Lie algebras. Every one-dimensional Lie algebra is abelian and thus rigid. Every non-abelian two-dimensional Lie algebra is isomorphic to the Lie algebra of the $(ax+b)$-group. This Lie algebra can be realized as $\gfr_{\phi}$, where $\phi={\rm Id}_{\R}$, and thus is rigid by the criterion from Corollary \ref{SplittingNew}.(iii).

Now assume that $\gfr$ is a three-dimensional real Lie algebra. If $\gfr$ contains a simple subalgebra, then it is actually simple (namely $\gfr$ is one of the two three-dimensional simple Lie algebras  $\L{sl}_2(\R)$ or $\L{su}(2)$), and thus rigid. Otherwise $\gfr$ is a solvable $3$-dimensional Lie algebra. We now recall the classification of such Lie algebras.

In order to determine a three-dimensional Lie algebra $\gfr$ it suffices to compute the commutators $[X_3, X_1]$, $[X_3, X_2]$ and $[X_1, X_2]$ for some basis $(X_1, X_2, X_3)$ of $\gfr$. The following table thus determines four families of three-dimensional Lie algebras:
\begin{center}
\begin{tabular}{ l | c | c | c }
 Name of the Lie algebra in \cite{deGraaf} & $[X_3, X_1]$ & $[X_3, X_2]$&$[X_1, X_2]$ \\
 \hline
  $L^1$ & $0$ & $0$ & $0$ \\
  $L^2$ & $X_1$ & $X_2$ & $0$ \\
  $L^3_a$ ($a \in \R$)& $X_2$ & $aX_1+X_2$ & $0$ \\
  $L^4_a$ ($a \in \R$) & $X_2$ & $aX_1$ & $0$ 
\end{tabular}
\end{center}
According to \cite{deGraaf}, every three-dimensional solvable Lie algebra is isomorphic to a Lie algebra in one of these families. Moreover, Lie algebras from different families are never isomorphic, Lie algebras of the form $L^3_a$ are pairwise non-isomorphic and $L^4_a = L^4_b$ if and only if $a = \lambda \cdot b$ for some $\lambda >0$. The Lie algebra $L^1$ is abelian, and in particular rigid. If $\gfr \in \{L^2, L^3_a, L^4_a\}$, then the subspace $V = {\rm span}(X_1, X_2)$ is a $2$-dimensional abelian ideal of $\gfr$, for which the short exact sequence
\[
0 \to V \to \gfr \to \R \to 0
\]
splits. It follows that $\gfr$ is of the form $\gfr = \gfr_{\phi}$, where $\phi$ is given by
\[
\phi^{(2)} = \left(\begin{matrix} 1&0\\0&1\end{matrix}\right), \quad \phi^{(3,a)} = \left(\begin{matrix} 0&1\\a&1\end{matrix}\right),  \quad \phi^{(4,a)} = \left(\begin{matrix} 0&1\\a&0\end{matrix}\right),
\]
according to whether $\gfr$ is equal to $L^2$, $L^3_a$ or $L^4_a$. Note that the algebra $L^4_0$ is isomorphic to the three-dimensional Heisenberg algebra $\L h_3$. It it immediate from the criterion in Corollary \ref{SplittingNew}.(v) that $\L h_3$ is the only non-rigid three-dimensional Lie algebra of the form $\gfr_\phi$. Since, by the above classification, every solvable Lie algebra of dimension $3$ is of the form $\gfr_\phi$ for some $\phi$, it follows that $\L h_3$ is also the unique non-rigid solvable three-dimensional Lie algebra. Combining this with our previous observations we have proved:
\begin{theorem}\label{ThmLow}
A Lie algebra of dimension $\leq 3$ is rigid if and only if it is not isomorphic to the three-dimensional Heisenberg algebra $\L h_3$.
\end{theorem}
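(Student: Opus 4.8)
The plan is to prove the statement by an exhaustive case analysis on the dimension, reducing rigidity in each case to one of the results already established: Lemma \ref{Trivialities} for abelian algebras, Theorem \ref{MainTheorem1} for reductive algebras, and Corollary \ref{SplittingNew} for the almost abelian solvable algebras $\gfr_\phi$. The only algebra of dimension $\leq 3$ that we expect to fail rigidity is $\L h_3$ itself, so the whole argument amounts to checking that every other algebra of dimension $\leq 3$ falls into one of these three categories in a form to which the corresponding criterion applies.

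In dimension one there is nothing to do: every such algebra is abelian, and Lemma \ref{Trivialities} gives $\mathcal Q(\gfr) = \gfr^*$, so $\gfr$ is rigid. In dimension two, either $\gfr$ is abelian — again rigid by Lemma \ref{Trivialities} — or $\gfr$ is the Lie algebra of the $(ax+b)$-group, which I would realize as $\gfr_\phi$ with $V = \R$ and $\phi = \mathrm{Id}_\R$. Since $\mathrm{Id}_\R$ is invertible we have $\im \phi^2 = \im \phi$, so Corollary \ref{SplittingNew} (the equivalence of (i) and (iii)) yields rigidity.

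The substantive case is dimension three, where I would first separate the solvable and non-solvable algebras. If $\gfr$ is not solvable, its radical is a proper ideal and the quotient is a nonzero semisimple algebra; since the smallest semisimple Lie algebra already has dimension three, $\gfr$ must itself be simple, hence one of $\slfr_2(\R)$ or $\sufr(2)$, and rigidity follows from Theorem \ref{MainTheorem1}. If $\gfr$ is solvable, I would invoke the classification of \cite{deGraaf}: apart from the abelian algebra $L^1$ (rigid by Lemma \ref{Trivialities}), every three-dimensional solvable algebra is one of $L^2$, $L^3_a$, $L^4_a$, and in each of these the span $V = \mathrm{span}(X_1,X_2)$ is a codimension-one abelian ideal complemented by $\R X_3$. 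Thus $\gfr = \gfr_\phi$ with $\phi$ equal to one of the explicit matrices $\phi^{(2)}$, $\phi^{(3,a)}$, $\phi^{(4,a)}$.

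It then remains to apply Corollary \ref{SplittingNew}, according to which $\gfr_\phi$ fails to be rigid exactly when $\phi$ has a nilpotent Jordan block of size $\geq 2$, equivalently when $\gfr_\phi$ surjects onto $\L h_3$. Here I would simply read off the Jordan type of each candidate: $\phi^{(2)} = \mathrm{Id}$ is invertible; $\phi^{(3,a)}$ has characteristic polynomial $\lambda^2 - \lambda - a$, so $0$ can occur only as a simple eigenvalue (when $a=0$), leaving $\phi^{(3,a)}$ diagonalizable; and $\phi^{(4,a)}$ has characteristic polynomial $\lambda^2 - a$, which produces a genuine nilpotent block of size two precisely when $a = 0$. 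Hence $L^4_0$ is the unique non-rigid three-dimensional solvable algebra, and since $L^4_0 \cong \L h_3$, combining this with the lower-dimensional cases and the simple case proves the theorem. The only real obstacle is one of bookkeeping: one must ensure that the list of \cite{deGraaf} is exhaustive and that each family genuinely splits over a two-dimensional abelian ideal, so that Corollary \ref{SplittingNew} is actually applicable; once this is granted, the Jordan-block computation is immediate.
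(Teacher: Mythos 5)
Your proposal is correct and follows essentially the same route as the paper: dispose of dimensions one and two directly, handle the non-solvable three-dimensional case via simplicity and Theorem \ref{MainTheorem1}, and reduce the solvable three-dimensional case to the classification of \cite{deGraaf} together with the criterion of Corollary \ref{SplittingNew} applied to the explicit endomorphisms $\phi^{(2)}$, $\phi^{(3,a)}$, $\phi^{(4,a)}$. The only (immaterial) difference is that you verify non-rigidity via the Jordan-block condition (iii)/(iv) of Corollary \ref{SplittingNew} where the paper cites condition (v); since the corollary establishes the equivalence of all these conditions, the two readings are interchangeable.
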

For example, the Lie algebras of the three-dimensional SOL group and of the $1$-dimensional unitary motion group $\C \rtimes U(1)$ are rigid.

\subsection{The three-dimensional Heisenberg algebra}
Since the three-dimensional Heisenberg algebra $\L h_3$ plays such a prominent role in our classification, let us describe the space of Ad-invariant Lie quasi-states on $\L h_3$ explicitly. Here we will think of $\L h_3$ as the Lie algebra of strictly upper triangular $3 \times 3$-matrices.
\begin{corollary}\label{CorH3}  Let $a \in \R$ and let $c$ be a sublinear continuous function on $\R$. Then the function 
\[
\zeta_{\alpha, c}\left(\left(\begin{matrix}0&x&z\\0&0&y\\0&0&0\end{matrix} \right)\right) = c(y/x) \cdot x + a \cdot y \quad(x \in \R^\times, y, z \in \R)
\]
extends continuously to an Ad-invariant quasi state $\zeta_{\alpha, c}: \L h_3 \to \R$. Moreover, every continuous Ad-invariant quasi-state on $\L h_3$ is of this form.
\end{corollary}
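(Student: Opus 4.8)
The plan is to realize Corollary \ref{CorH3} as a direct specialization of Theorem \ref{SolvableAdInv} to the one-dimensional endomorphism datum underlying $\L h_3$. First I would set up the coordinates explicitly. Writing an element of $\L h_3$ as the strictly upper triangular matrix with entries $x, y, z$ as in the statement, the bracket is $\left[\left(\begin{smallmatrix}0&x&z\\0&0&y\\0&0&0\end{smallmatrix}\right),\left(\begin{smallmatrix}0&x'&z'\\0&0&y'\\0&0&0\end{smallmatrix}\right)\right]=\left(\begin{smallmatrix}0&0&xy'-x'y\\0&0&0\\0&0&0\end{smallmatrix}\right)$, so the center is the line of matrices with $x=y=0$. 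To match the semidirect-product picture $\gfr_\phi = V \rtimes \R$, I would take the generator of the $\R$-factor to be $X = X_3$ (the matrix with $x=1$, $y=z=0$), and $V = \linspan(X_1, X_2)$ to be the abelian ideal spanned by $Y_2$ (the matrix with $y=1$) and $Y_3$ (the matrix with $z=1$). A quick bracket computation gives $[X_3, Y_2] = Y_3$ and $[X_3, Y_3] = 0$, so the underlying endomorphism is $\phi = \phi_2 = \left(\begin{smallmatrix}0&0\\1&0\end{smallmatrix}\right)$ in the ordered basis $(Y_2, Y_3)$ of $V$. (This is the transpose of the normalization $\phi_2$ from Corollary \ref{SplittingNew}, but the choice of basis is immaterial.) Here the ``$t$'' coordinate of the general theorem is the coefficient $x$ of $X_3$, the ``$v$'' coordinate records $(y,z)$, and $\phi(v) = y \cdot Y_3$, so $\phi(v)/t$ corresponds precisely to $y/x$.

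Next I would read off the image and invariance conditions. One computes $W = \im\phi = \R \cdot Y_3$ and $\im\phi^2 = \{0\}$, so the quotient $W/\im\phi^2 = W \cong \R$ is the full image, identified with the $y$-coordinate axis. Theorem \ref{SLTheorem} then says every continuous Lie quasi-state on $\L h_3$ is $\zeta_{\alpha,c}$ for a linear functional $\alpha$ on $V$ (i.e.\ $\alpha(y,z) = a y + b z$ for constants $a, b$) and a continuous sublinear $c : W \cong \R \to \R$, via the formula $\zeta_{\alpha,c} = c(y/x)\cdot x + \alpha(y,z)$ for $x \neq 0$. It remains to impose Ad-invariance. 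By Theorem \ref{SolvableAdInv}(i) we need $\im\phi \subset \ker\alpha$, which forces $\alpha$ to kill $Y_3$, i.e.\ the coefficient $b$ of $z$ must vanish, leaving $\alpha(y,z) = a y$ exactly as in the statement. Condition (ii) of that theorem asks that $c$ descend to $W/\im\phi^2 = W$, which is automatic here since $\im\phi^2 = \{0\}$, so it imposes no constraint on $c$ beyond continuity and sublinearity. Substituting gives precisely $\zeta_{\alpha,c}\left(\left(\begin{smallmatrix}0&x&z\\0&0&y\\0&0&0\end{smallmatrix}\right)\right) = c(y/x)\cdot x + a y$.

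Finally I would address the continuous extension and the exhaustiveness claim. That $\zeta_{\alpha,c}$ extends continuously across $x=0$ is exactly the sublinearity half of Theorem \ref{SLTheorem}: on the locus $x=0$ the formula reads $\alpha(y,z) = a y$, and continuity at $x=0$ is equivalent to $\lim_{v\to\infty} c(v)/|v| = 0$, which holds by the standing sublinearity hypothesis on $c$. Exhaustiveness is immediate: Theorem \ref{SLTheorem} classifies all continuous Lie quasi-states as the $\zeta_{\alpha,c}$, and Theorem \ref{SolvableAdInv} characterizes the Ad-invariant ones among them by (i) and (ii); since we have just seen these two conditions cut the parameter space down to exactly $(a, c)$ with $a \in \R$ and $c$ continuous sublinear on $\R$, every continuous Ad-invariant quasi-state on $\L h_3$ arises this way. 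I do not expect any genuine obstacle here; the corollary is a bookkeeping specialization of the two structure theorems, and the only point demanding care is the identification of the coordinates $(x,y,z)$ with the abstract data $(t, \phi(v), \cdot)$ so that $\phi(v)/t$ is correctly matched with $y/x$ and the degenerate relation $\im\phi^2 = \{0\}$ is used to see that condition (ii) is vacuous.
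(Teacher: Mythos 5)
Your proposal is correct and matches the paper's own proof essentially verbatim: both realize $\L h_3$ as $\gfr_\phi$ with $V=\linspan(E_{2,3},E_{1,3})$, $\phi(E_{2,3})=E_{1,3}$, $\phi(E_{1,3})=0$, and then read off conditions (i) and (ii) of Theorem \ref{SolvableAdInv}, noting that $\im\phi^2=\{0\}$ makes (ii) vacuous and that (i) forces $\alpha$ to kill the $z$-coordinate. The coordinate matching $\phi(v)/t \leftrightarrow y/x$ and the continuity-via-sublinearity point are handled exactly as in the paper.
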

\begin{proof} Let $(X, Y, Z)$ be the basis of $\L h_3$ given by the elementary matrices  $X = E_{1,2}$, $Y = E_{2,3}$ and $Z= E_{1, 3}$ so that
\[
[X, Y] = Z, \; [X, Z] = 0, \; [Y, Z] = 0.
\]
Then $V  := {\rm span}(Y, Z)\lhd \gfr$ is a $2$-dimensional ideal and $\gfr = \gfr_\phi$, where $\phi \in {\rm End}(V)$ is given by $\phi(Y) = Z$, $\phi(Z) = 0$. In particular, $\ker \phi = \im \phi = \R \cdot Z$ and $\im \phi^2 = 0$. It then follows from Theorem \ref{SolvableAdInv} that the Ad-invariant Lie quasi-states on $\L h_3$ are of the form $\zeta=\zeta_{\alpha, c}$, where $\alpha$ is a linear functional on $V$ which vanishes on $\R\cdot Z$ and $c$ is a sublinear continuous function on $\R \cdot Z$. Let $a \in \R$ such that $\alpha(yY+zZ) = a \cdot y$ and identify $c$ with a sublinear continuous function on $\R$ via the isomorphism $\lambda \mapsto \lambda \cdot Z$. Then we obtain for all $x \neq 0$, 
\[
\zeta_{\alpha, c}(xX+yY+zZ) = c(\phi(yY+zZ)/x) \cdot x + \alpha(yY+zZ) = c(y/x) \cdot x + a \cdot y,
\]
which finishes the proof.
\end{proof}

\section{Unitary motion algebras}
\label{sec:unitary}

In this section we shall consider the family of unitary motion algebras $\L g_n := \C^n \rtimes \L{u}(n)$. We have already seen in Subsection \ref{SolvableExample} that the Lie algebra $\L g_1$ is rigid and that $\mathcal Q(\gfr_1)$ is infinite-dimensional. For the rest of this section we will thus assume $n \geq 2$, unless otherwise mentioned.

\subsection{Normalized Lie quasi-states and generalized frame function}
Recall from Subsection \ref{SemidirectGeneral} that a Lie quasi-state $\zeta: \gfr_n \to \R$ is called \emph{normalized} provided
\[
\zeta(v,0) = \zeta(0, X) = 0
\]
for all $v \in \C^n$ and $X \in \L u(n)$. We are going to show that for $n \geq 2$ every normalized continuous Lie quasi-state is actually trivial, thereby reducing the study of continuous Lie quasi-states on $\L g_n$ to the separate study of Lie quasi-states on $\C^n$ and $\L u(n)$. Recall that in \cite{Gl57}, Gleason established linearity of Lie quasi-states on $\L u(n)$, $n \geq 3$, by relating them to so-called frame functions and showing that such frame functions are necessarily of a special form. We will follow a similar strategy and relate normalized Lie quasi-states on $\gfr_n$ to generalized frame functions in the sense of the following definition:
\begin{definition} A function $f: \C^n \to \R$ is called a \emph{generalized frame function} if it satisfies $f(0) = 0$ and for every pair $(u,v)$ of orthogonal vectors in $\C^n \setminus\{0\}$ we have
\[
f(u+v) = f(u)+f(v).
\]
\end{definition}
Obviously, every homomorphism $f: \C^n \to \R$ is a generalized frame function, in particular there exist generalized frame function of linear growth in arbitrary dimensions. However, as we establish in Theorem \ref{FFTrivial} in the appendix, there are no frame functions of \emph{sublinear} growth for any $n \geq 2$. Here a function $f: \C^n \to \R$ is called sublinear provided 
\[
\lim_{v \to \infty} \frac{f(v)}{\|v\|} = 0.
\]
The following propositions links generalized frame functions to normalized Lie quasi-states on $\L g_n$. 

\begin{proposition}\label{FrameSL} Let $n \geq 2$ and let $\zeta: \gfr_n \to \R$ be a normalized Lie quasi-state. Then the following hold: 
\begin{enumerate}
\item The function $f_\zeta: \C^n \to \R$ given by 
\[
f_\zeta(w) := \zeta(-iw, i\cdot{\bf 1})
\]
is a generalized frame function. 
\item If $\zeta$ is continuous, then $f_\zeta$ is continuous and sublinear.
\item $\zeta$ is  uniquely determined by $f_\zeta$. In particular, $f_\zeta = 0$ implies $\zeta = 0$.
\end{enumerate}
\end{proposition}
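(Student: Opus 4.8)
The plan is to verify the three claims in order, using the explicit structure of the Lie algebra $\gfr_n = \C^n \rtimes \L u(n)$, where the bracket is governed by the standard action $(v, X).(w,Y) \mapsto (Xw - Yv, [X,Y])$. The key computational fact is that two elements $(v, iX)$ and $(w, iY)$ of $\gfr_n$ commute precisely when $[X,Y] = 0$ (as skew-Hermitian operators) and $iXw - iYv = 0$. For part (i), I would take the specific element $A := (-iw, i\cdot \mathbf{1})$ and, given a pair of orthogonal vectors $u, v \in \C^n \setminus \{0\}$, exhibit a \emph{decomposition} of $(-i(u+v), i\cdot\mathbf 1)$ into two commuting summands whose images under $\zeta$ are $f_\zeta(u)$ and $f_\zeta(v)$. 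The natural candidates are the elements built from the orthogonal projections $P_u$ and $P_v$ onto the lines $\C u$ and $\C v$: since $u \perp v$, the projections $P_u, P_v$ commute, and one can write $i \cdot \mathbf 1$ as a sum of commuting pieces supported on the orthogonal decomposition $\C u \oplus \C v \oplus (\C u \oplus \C v)^\perp$. The main point is to check that the two summands one chooses genuinely commute in $\gfr_n$ (both the $\L u(n)$-components and the mixed $\C^n$-components vanishing as required) and that, after applying normalization of $\zeta$ (which kills the purely-$\L u(n)$ and purely-$\C^n$ contributions), the additivity $\zeta(A) = f_\zeta(u) + f_\zeta(v)$ falls out. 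The relation $f_\zeta(0) = \zeta(0, i\cdot\mathbf 1) = 0$ is immediate from normalization.

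For part (ii), continuity of $f_\zeta$ is automatic since $w \mapsto (-iw, i\cdot\mathbf 1)$ is continuous (indeed linear) and $\zeta$ is assumed continuous. Sublinearity is the more substantive point: I would like to appeal to Lemma \ref{Sublinearity}, but that lemma controls $\zeta(v, X)$ as $v \to \infty$ with the $\hfr$-component fixed, whereas here as $w \to \infty$ both coordinates of $(-iw, i\cdot\mathbf 1)$ stay coupled only through the first. In fact the second coordinate $i\cdot\mathbf 1$ is held \emph{fixed}, so $f_\zeta(w) = \zeta_X(-iw)$ with $X = i\cdot\mathbf 1$, and Lemma \ref{Sublinearity} applies directly to give $\zeta_X(-iw)/\|w\| \to 0$, which is exactly sublinearity of $f_\zeta$. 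This is the cleanest step.

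Part (iii) is where I expect the real work to lie, and it is the heart of the proposition: recovering the full normalized quasi-state $\zeta$ from the single function $f_\zeta$. The idea is that a normalized Lie quasi-state, being linear on abelian subalgebras, is determined by its values on a spanning family of such subalgebras, and one must show these values are all computable from $f_\zeta$. Given an arbitrary element $(v, iX) \in \gfr_n$ with $X$ Hermitian, I would diagonalize $X = \sum_j \lambda_j P_j$ over an orthonormal eigenbasis; the abelian subalgebra generated by the commuting spectral pieces lets one write $\zeta(v, iX)$ as a sum of contributions from each eigenline, after absorbing the $\C^n$-part $v$ appropriately via the commuting decomposition and normalization. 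Each such scalar contribution should, after rescaling the eigenvalue to match the normalization $i\cdot\mathbf 1$ used in defining $f_\zeta$ and using homogeneity of $\zeta$ along the relevant one-parameter family, be expressible through values $f_\zeta(\cdot)$ on the eigenlines. The delicate part is handling the coupling between the $\C^n$-component $v$ and the eigenvalues $\lambda_j$: the commuting condition forces $v$ to lie in a controlled position relative to the spectral decomposition of $X$, and one must track this carefully through the additivity relations. Concretely, the obstacle is to reduce a general $(v, iX)$ to a commuting sum of elements each of the form $(\ast, i\lambda_j P_j)$ and then to rescale each back to the normalization $(-iw_j, i\cdot\mathbf 1_{\text{on the $j$-th line}})$ that appears in $f_\zeta$; verifying that this reduction is consistent and that no information beyond $f_\zeta$ is needed is the crux. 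Once this is done, $f_\zeta = 0$ forces every such contribution to vanish, giving $\zeta = 0$ and completing the proof.
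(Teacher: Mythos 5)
Your plan follows the paper's proof essentially verbatim: the same commuting decomposition of $(-i(u+v),i\cdot\mathbf 1)$ via the skew-Hermitian projections $P_u$, $P_v$ and the complementary piece $Q$ for (i), the same direct appeal to Lemma \ref{Sublinearity} with the second coordinate fixed for (ii), and the same spectral decomposition of $X$ into commuting summands $(-iw_j,\lambda_j P_{v_j})$ rescaled by homogeneity for (iii). One small correction: in (iii) there is no ``commuting condition'' constraining $v$ relative to the spectrum of $X$ --- the summands commute automatically for arbitrary $v$ since $P_{v_j}(w_k)=0$ for $j\neq k$ --- but this does not affect the argument.
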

\begin{proof} For the proof we introduce the following notation: Firstly, we
denote by $\langle \cdot, \cdot \rangle$ an inner product on $\C^n$ with respect to which elements of $\L u(n)$ are skew-Hermitian. We use the convention that $\langle \cdot, \cdot \rangle$ is linear in the first argument and anti-linear in the second argument. Given a vector $v \in \C^n\setminus\{0\}$ we define $P_v \in \L u(n)$ by
\[
P_v(w) = i\frac{\langle w,v \rangle}{\langle v, v\rangle} v,
\]
so that $-iP_v$ is the orthogonal projection onto the line spanned by $v$. Note that $P_v = P_w$ if and only if $v =\lambda w$ for some $\lambda \in \C^\times$ and that for every orthogonal basis $(v_1, \dots, v_n)$ we have 
\begin{equation}\label{ProjectorSum}
\sum_{j=1}^nP_{v_j} = i\cdot {\bf 1}.\end{equation}
We claim that for all $w \in \C^n \setminus\{0\}$ we have
\begin{equation}\label{EqProjector}
f_\zeta(w) = \zeta(-iw, P_w).
\end{equation}
Observe first that if $Q \in \L u(n)$ satisfies $Qv  = 0$ for some $v \in \C^n$, then for all $P \in \L u(n)$ that commutes with $Q$ we have $[(v, P), (0, Q)] = 0$ and hence
\[ 
\zeta(v, P+Q) = \zeta(v, P) + \zeta(0, Q) = \zeta(v,P).
\]
In particular, if $u \in \C^n \setminus\{0\}$ we can apply this to $u:= -iv$, $P:= P_v$ and $Q := i\cdot {\bf 1}-P_v$ to obtain
\[
f_\zeta(v) = \zeta(-iv,  i\cdot{\bf 1}) = \zeta(-iv, P_v+Q) = \zeta(-iv, P_v),
\]
which establishes \eqref{EqProjector}. We now use this to establish (i)-(iii):

(i) If $u,v \in \C^n \setminus\{0\}$ are orthogonal and $Q := i \bf{\bf 1} - P_u - P_v$, then the elements $(-iu, P_u)$, $(-iv, P_v)$ and $(0, Q)$ pairwise commute, and we deduce with \eqref{EqProjector} that
\[
f_\zeta(u+v) = \zeta(-iu-iv, P_u+P_v+Q) = \zeta(-iu, P_u)+\zeta(-iv, P_v)+\zeta(0,Q) =  \zeta(-iu, P_u)+\zeta(-iv, P_v) =f_\zeta(u) + f_\zeta(v).
\]
Since $\zeta$ is normalized we also have $f_\zeta(0) = 0$, and thus $f_\zeta$ is indeed a generalized frame function.

(ii) Continuity of $f_\zeta$ is immediate from continuity of $\zeta$, and sublinearity follows from Lemma \ref{Sublinearity}.

(iii) Let $w \in \C^n$ and let $X \in \L u(n)$. We are going to express $\zeta(w,X)$ in terms of the function $f:=f_\zeta$. For this let $(v_1, \dots, v_n)$ be an orthonormal basis of eigenvectors of $X$ with corresponding (purely imaginary) eigenvalues $(i\lambda_1, \dots, i\lambda_n)$. Note that $X \in \L u(n)$ implies that $\lambda_j \in \R$, so we can order them by decreasing absolute value and assume that $|\lambda_1| \geq \dots \geq |\lambda_l| > \lambda_{l+1} = \dots = \lambda_n = 0$, 
for some index $l = l(X)$. If we wish to further emphasize the dependence on $X$ we write $\lambda_j(X)$ and $v_j(X)$ instead of $\lambda_j$ and $v$. 

Now let $w \in \C^n$ and set $w_j := P_{v_j}(w)$. We observe that by \eqref{ProjectorSum},
\[
X.w = X\left(-i\cdot \sum_{j=1}^nP_{v_j}(w)\right) = \sum_{j=1}^n \lambda_j P_{v_j}(w).
\]
and
\[
w=-i \sum_{j=1}^n w_j.
\]
We thus obtain
\[
(w, X) = \sum_{j=1}^n (-iw_j, \lambda_jP_{v_j}) 
\]
Since for $j \neq k$ we have $\lambda_jP_{v_j}(-iw_k) = 0 =\lambda_kP_{v_k}(-iw_j)$ the summands on the right hand side commute, and thus
\begin{equation}\label{EqZetawX}
\zeta(w, X) = \sum_{j=1}^n \zeta(-iw_j, \lambda_jP_{v_j})=  \sum_{j=1}^l \zeta(-iw_j, \lambda_jP_{v_j}) 
\end{equation}
Now assume that $j \leq l$. If $w_j \neq 0$,  then $w_j/\lambda_j$ is proportional to $v_j$, whence $P_{w_j/\lambda_j} = P_{v_j}$. In this case we thus have
\begin{eqnarray*}
\zeta(-iw_j, \lambda_jP_{v_j}) &=& \lambda_j \cdot \zeta(-iw_j/\lambda_j, P_{v_j})\\ 
&=& \lambda_j \cdot \zeta(-iw_j/\lambda_j, P_{w_j/\lambda_j})\\
&=& \lambda_j \cdot f(w_j/\lambda_j)\\
&=& \lambda_j \cdot f(P_{v_j}(w)/\lambda_j), 
\end{eqnarray*}
i.e.
\begin{equation}\label{wjEq}
\zeta(-iw_j, \lambda_jP_{v_j})=\lambda_j \cdot f(P_{v_j}(w)/\lambda_j).
\end{equation}
If $w_j = 0$ then \eqref{wjEq} still holds, since both sides of the equation are $0$ by our normalisation. Plugging \eqref{wjEq} into \eqref{EqZetawX} we obtain
\begin{equation*}\label{QSfromFF}
\zeta(w,X) = \sum_{j=1}^{l(X)} \lambda_j(X) \cdot f_\zeta(P_{v_j(X)}(w)/\lambda_j(X)).
\end{equation*}
This shows in particular that $\zeta$ is uniquely determined by $f_\zeta$.
\end{proof}
Combining this with the aforementioned Theorem \ref{FFTrivial} we deduce:
\begin{corollary}\label{NCLQSTrivial} Let $n \geq 2$. Then every normalized continuous Lie quasi-state $\zeta: \gfr_n \to \R$ is trivial.
\end{corollary}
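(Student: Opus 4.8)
The plan is to derive this corollary as a direct formal consequence of Proposition \ref{FrameSL} together with the appendix result Theorem \ref{FFTrivial}; no new computation is needed beyond what those two statements already provide.

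First I would take an arbitrary normalized continuous Lie quasi-state $\zeta: \gfr_n \to \R$ with $n \geq 2$ and pass to its associated function $f_\zeta: \C^n \to \R$, defined by $f_\zeta(w) = \zeta(-iw, i\cdot{\bf 1})$. By parts (i) and (ii) of Proposition \ref{FrameSL}, this $f_\zeta$ is a generalized frame function which is moreover continuous and sublinear. At this point the whole problem has been transported from $\gfr_n$ to $\C^n$.

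Next I would apply Theorem \ref{FFTrivial}, which rules out nonzero generalized frame functions of sublinear growth on $\C^n$ for every $n \geq 2$; applied to our $f_\zeta$ this forces $f_\zeta \equiv 0$. Finally I would invoke the uniqueness clause in part (iii) of Proposition \ref{FrameSL}---that $\zeta$ is completely determined by $f_\zeta$, so that $f_\zeta = 0$ implies $\zeta = 0$---to conclude $\zeta \equiv 0$, which is exactly the assertion that $\zeta$ is trivial.

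I do not expect the corollary itself to present any obstacle, precisely because the substantive content has been packaged into its two ingredients. The real difficulty sits in Theorem \ref{FFTrivial}: proving that on $\C^n$, $n \geq 2$, every sublinear generalized frame function must vanish is the analogue in this setting of the hard direction of Gleason's theorem, and that is where all the genuine work is concentrated.
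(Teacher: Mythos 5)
Your argument is correct and is exactly the paper's proof: pass to $f_\zeta$ via Proposition \ref{FrameSL}(i)--(ii), kill it with Theorem \ref{FFTrivial}, and recover $\zeta \equiv 0$ from the uniqueness statement in Proposition \ref{FrameSL}(iii). Nothing to add.
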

\begin{proof} If $\zeta$ is any continuous quasi-state on $\gfr_n$, then $f_\zeta$ is a sublinear generalized frame function by Proposition \ref{FrameSL}. We thus deduce from Theorem \ref{FFTrivial} that $f_\zeta \equiv 0$, which in turn implies $\zeta \equiv 0$ by invoking Proposition \ref{FrameSL} again.
\end{proof}

\subsection{The case $n \geq 3$}
Recall from Lemma \ref{SemidirectLemma} that the space of continuous Lie quasi-states on $\L g_n$ is given by
\begin{equation*}\label{UMADec}
\mathcal Q(\L g_n) = \mathcal Q_{0}(\L g_n) \oplus (\C^n)^* \oplus \mathcal Q(\L u(n)), \end{equation*}
where $\mathcal Q_{0}(\L g_n)$ denotes the subspace of normalised continuous Lie quasi-states. We have just established in Corollary \ref{NCLQSTrivial} that this space is trivial, whence
\begin{equation}\label{NoMixedQS}
\mathcal Q(\L g_n) =(\C^n)^* \oplus \mathcal Q(\L u(n)),
\end{equation}
i.e. every continuous quasi-state on $\L g_n$ decomposes into a sum of continuous quasi-states on $\C^n$ and $\L u(n)$. If $n \geq 3$, then every continuous Lie quasi-state on $\L u(n)$ is linear by Gleason's theorem, and Lie quasi-states on $\C^n$ are linear anyway. We deduce:
\begin{theorem}\label{unStrongRigidity} For $n \geq 3$ every continuous Lie quasi-state on $\gfr_n$ is linear.
\end{theorem}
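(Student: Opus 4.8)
The plan is to assemble the statement from the two structural facts already in hand, so that essentially no new work is required. First I would invoke the decomposition of continuous Lie quasi-states furnished by Lemma \ref{SemidirectLemma},
\[
\mathcal Q(\L g_n) = \mathcal Q_0(\L g_n) \oplus (\C^n)^* \oplus \mathcal Q(\L u(n)),
\]
and then eliminate the normalized summand by appealing to Corollary \ref{NCLQSTrivial}, which gives $\mathcal Q_0(\L g_n) = 0$ for every $n \geq 2$, in particular for $n \geq 3$. This is precisely the content of \eqref{NoMixedQS}: every continuous Lie quasi-state $\zeta$ on $\L g_n$ splits as $\zeta(v,X) = \alpha(v) + \psi(X)$ with $\alpha \in (\C^n)^*$ and $\psi \in \mathcal Q(\L u(n))$.

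It then suffices to check that each of the two summands is linear. The first is linear by construction, since $(\C^n)^*$ is the space of linear functionals on the abelian ideal $\C^n$. For the second I would invoke Gleason's theorem in the form recalled above: for $n \geq 3$ every continuous Lie quasi-state on $\L u(n)$ is linear. This is the only point at which the hypothesis $n \geq 3$ is used rather than merely $n \geq 2$, and it is exactly the step that fails in the rank-one and rank-two cases. Since a sum of linear functionals is again a linear functional on $\L g_n$, we conclude that $\zeta \in \L g_n^*$, which is the assertion.

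The genuine mathematical content, and hence the real obstacle, has already been discharged before reaching this statement. The vanishing of $\mathcal Q_0(\L g_n)$ rests on Corollary \ref{NCLQSTrivial}, which in turn depends on the nonexistence of sublinear generalized frame functions (Theorem \ref{FFTrivial} in the appendix) transported through the correspondence $\zeta \mapsto f_\zeta$ of Proposition \ref{FrameSL}, while the linearity on $\L u(n)$ is Gleason's theorem. Granting these two inputs, the proof of the present theorem is immediate: one only reads off the decomposition \eqref{NoMixedQS} and observes that both surviving summands are linear.
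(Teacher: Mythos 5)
Your proposal is correct and follows exactly the paper's own argument: decompose via Lemma \ref{SemidirectLemma}, kill the normalized summand with Corollary \ref{NCLQSTrivial}, and conclude by Gleason's theorem on $\L u(n)$ for $n \geq 3$ together with linearity of quasi-states on the abelian ideal. Nothing to add.
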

Note that the theorem does not hold for $n \in \{1, 2\}$. In both cases the space of continuos Lie quasi-states on $\gfr_n$ is infinite-dimensional. For $n=1$ this was established in Subsection \ref{SolvableExample}, whereas for $n=2$ it follows from the fact that $\mathcal Q(\L u(2))$ is infinite-dimensional.

\subsection{Rigidity of unitary motion algebras}
Theorem \ref{unStrongRigidity} implies in particular that the Lie algebras $\L g_n$ are rigid for all $n \geq 3$. Moreover, we have already established rigidity for $n=1$ in Subsection \ref{SolvableExample}. The following theorem deals with the remaining case $n=2$.
\begin{theorem}\label{MotionAd}
The Lie algebra $\L g_n = \C^n \rtimes \L u(n)$ is rigid for all $n \geq 1$. Moreover, $\dim {\mathcal Q}_{\rm Ad}(\gfr_n)=1$ for all $n \geq 1$ and every continuous ${\rm Ad}$-invariant Lie quasi-state on $\L g_n$ is of the form
\[
\zeta(v,X) = i\lambda \cdot {\rm tr}(X)
\]
for some $\lambda \in \R$.
\end{theorem}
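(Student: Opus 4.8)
The plan is to handle all $n \geq 2$ in one stroke and to dispose of $n=1$ by citing Subsection \ref{SolvableExample}: there $\gfr_1 = \C \rtimes \L u(1)$ is shown to be rigid, and writing $\gfr_1 = \gfr_\phi$ with $\phi$ the (invertible) rotation $v \mapsto iv$, so that $\im\phi = \im\phi^2 = \C$, Theorem \ref{SolvableAdInv} forces $\alpha = 0$ and $c$ constant, whence $\mathcal Q_{\rm Ad}(\gfr_1)$ is one-dimensional and spanned by $(v,X) \mapsto i\tr(X)$. For $n \geq 2$ I would start from the decomposition \eqref{NoMixedQS}, which is exactly what Corollary \ref{NCLQSTrivial} provides: every continuous Lie quasi-state on $\gfr_n$ has the shape $\zeta(v,X) = \alpha(v) + \psi(X)$ with $\alpha \in (\C^n)^*$ and $\psi \in \mathcal Q(\L u(n))$. (In particular this reproves rigidity for $n \geq 3$ using only the Ad-invariant part, without appealing to Theorem \ref{unStrongRigidity}.) Everything then comes down to determining which such $\zeta$ are Ad-invariant.

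The next step is to record the adjoint action in the two directions of the semidirect product. Using the bracket $[(v,X),(w,Y)] = (Xw - Yv,[X,Y])$, one computes $\ad(u,0)^2 = 0$ and checks that $\ad(0,Y)$ preserves each factor, giving
\[
\Ad(\exp(u,0))(w,Z) = (w - Zu,\, Z)
\qquad\text{and}\qquad
\Ad(\exp(0,Y))(w,Z) = (e^{Y}w,\, e^{Y}Z e^{-Y}).
\]
Since the elements $\ad(u,0)$ and $\ad(0,Y)$ span $\ad(\gfr_n)$, the corresponding one-parameter subgroups generate the connected adjoint group, so invariance under these two families is equivalent to Ad-invariance.

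Imposing invariance of $\zeta(v,X)=\alpha(v)+\psi(X)$ under the translation direction gives $\alpha(Zu) = 0$ for all $u \in \C^n$ and $Z \in \L u(n)$; since $\L u(n)$ acts irreducibly on $\C^n$ the vectors $Zu$ span $\C^n$, forcing $\alpha = 0$. With $\alpha = 0$ the rotation direction reduces to $\psi(e^{Y}Ze^{-Y}) = \psi(Z)$, i.e. $\psi$ is conjugation-invariant, so $\psi \in \mathcal Q_{\rm Ad}(\L u(n))$. Conversely, if $\alpha = 0$ and $\psi$ is conjugation-invariant then $\zeta(v,X) = \psi(X)$ is left unchanged by both generating families (the translations fix the $\L u(n)$-component and the rotations conjugate it), hence is Ad-invariant. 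This yields a canonical isomorphism $\mathcal Q_{\rm Ad}(\gfr_n) \cong \mathcal Q_{\rm Ad}(\L u(n))$.

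It remains to compute the right-hand side and to read off rigidity. Decomposing $\L u(n) = \R\cdot i{\bf 1} \oplus \L{su}(n)$ as Lie algebras, Lemma \ref{Trivialities} and Theorem \ref{MainTheorem1Ext} (the simple factor $\L{su}(n)$ is compact, hence non-Hermitian, and so contributes nothing) show that $\mathcal Q_{\rm Ad}(\L u(n))$ is one-dimensional and spanned by $i\tr$. Transporting this back gives $\dim \mathcal Q_{\rm Ad}(\gfr_n) = 1$ with the single generator $(v,X) \mapsto i\tr(X)$, which is the asserted explicit form. Finally, this generator is linear and vanishes on $[\gfr_n,\gfr_n] = \C^n \oplus \L{su}(n)$, hence is a genuine homomorphism $\gfr_n \to \R$ and so lies in ${\rm Hom}(\gfr_n,\R) \subset \mathcal Q_{\rm qm}(\gfr_n)$; combined with the general inclusion $\mathcal Q_{\rm qm}(\gfr_n) \subset \mathcal Q_{\rm Ad}(\gfr_n)$ this forces equality and establishes rigidity. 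The substantive difficulty has all been absorbed into Corollary \ref{NCLQSTrivial} and the frame-function theorem behind it; granting the resulting decomposition, the only points requiring care are the correct form of the adjoint action and the spanning fact $\L u(n)\cdot\C^n = \C^n$, after which the genuine obstacle in the remaining case $n=2$ — namely that $\mathcal Q(\L u(2))$ is infinite-dimensional — evaporates, because Ad-invariance collapses the $\L u(n)$-factor onto its one-dimensional centre.
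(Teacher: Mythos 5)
Your proof is correct and follows essentially the same route as the paper: both rest on the decomposition $\mathcal Q(\gfr_n) = (\C^n)^* \oplus \mathcal Q(\L u(n))$ supplied by Corollary \ref{NCLQSTrivial}, then use Ad-invariance to kill the $(\C^n)^*$-component and reduce to $\mathcal Q_{\rm Ad}(\L u(n))$, which is one-dimensional by the reductive case. The only cosmetic difference is that you eliminate the linear functional on $\C^n$ via the translation part of the adjoint action ($\alpha(Zu)=0$), whereas the paper uses conjugation by $U(n)$ to conclude that this functional is radial and hence zero.
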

\begin{proof} Observe first that every Lie algebra homomorphism (i.e. every Ad-invariant \emph{linear} Lie quasi-state) $\gfr_n \to \R$ factors through $\L u(n)$, and every Lie algebra homomorphism $\L u(n) \to \R$ is of the form $X \mapsto  i\lambda \cdot {\rm tr}(X)$ for some $\lambda \in \R$. The latter is obvious for $n=1$ and follows from the decomposition $\L u(n) = \L{su}(n) \oplus \R \cdot i {\bf 1}$ for $n \geq 2$, since then $\L{su}(n)$ is simple. We conclude that the second statement of the theorem follows from the rigidity statement. In view of our previous results it thus only remains to establish rigidity for $\gfr_2$; the following argument works actually for all $n \geq 2$.

Let $n \geq 2$ and $\zeta: \gfr_n \to \R$ be a continuous Ad-invariant Lie quasi-state. By Subsection \ref{UMADec} there exist $\psi \in (\C^n)^*$ and $\zeta_{\L u(n)} \in \mathcal Q(\L u(n))$ such that
\[
\zeta(v,X) = \psi(v) + \zeta_{\L u(n)}(X).
\]
Now since $\zeta$ is ${\rm Ad}$-invariant, we have 
\[
\zeta(v,X) = \zeta({\rm Ad}(k)(v, X)) = \zeta(k.v, {\rm Ad}(k)(X))
\]
for all $(v,X) \in \gfr_n$ and all $k \in U(n)$, which we can write out as 
\[
\psi(v) + \zeta_{\L u(n)}(X) = \psi(k.v) +\zeta_{\L u(n)}({\rm Ad}(k)(X)).
\]
Setting $X = 0$, respectively $v=0$, we obtain that $\psi$ is radial and that $\zeta_{\L u(n)}$ is an Ad-invariant Lie quasi-state. Now every radial linear functional is trivial, and since $\L u(n)$ is reductive, we deduce from Theorem \ref{MainTheorem1} that every Ad-invariant Lie quasi-state on $\L u(n)$ is linear. It follows that $\zeta$ itself is linear, and thus $\gfr_n$ is rigid for $n \geq 2$.
\end{proof}

\newpage 
\appendix
\section{Generalized frame functions}\label{AppendixFrame}
In our study of Lie quasi-states on unitary motion algebras we introduced the following class of functions, which might be of independent interest:
\begin{definition} A function $f: \C^n \to \R$ is called a \emph{generalized frame function} if it satisfies $f(0) = 0$ and for every pair $(u,v)$ of orthogonal vectors in $\C^n \setminus\{0\}$ we have
\[
f(u+v) = f(u)+f(v).
\]
\end{definition}
If $n=1$, then every function $f: \C \to \C$ with $f(0) = 0$ is a generalized frame function, so the concept is meaningless. We will thus assume from now on that $n \geq 2$. In this case the notion of generalized frame function is rather restrictive.  Clearly, every homomorphism $f:\C^n \to \R$ is a generalized frame function. An example of a non-linear generalized frame function is given by $f(v) = \|v\|^2$. Indeed, this is the content of the Pythagoras theorem. In particular, generalized frame functions of linear and quadratic growth type exist in every dimension. The purpose of this appendix is to show that for $n \geq 2$ generalized frame functions of sublinear growth cannot exist. Here a function $f: \C^n \to \R$ is called \emph{sublinear} provided
\[
\lim_{v \to \infty} \frac{f(v)}{\|v\|} = 0.
\]
\begin{theorem}\label{FFTrivial} Let $n \geq 2$. Then every sublinear generalized frame function is trivial.
\end{theorem}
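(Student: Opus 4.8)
The plan is to reduce to the case $n = 2$ and there to extract a single functional equation that, together with sublinearity, pins $f$ down completely. For the reduction, observe that if $\Pi \subset \C^n$ is any two-dimensional complex subspace, then $f|_\Pi$ is again a sublinear generalized frame function: orthogonality inside $\Pi$ is orthogonality in $\C^n$, so the additivity relation is inherited, $f(0) = 0$ is trivially kept, and $w \to \infty$ inside $\Pi$ forces $w \to \infty$ in $\C^n$. Since every nonzero vector lies in some such $\Pi$, it suffices to prove triviality for $n = 2$.

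So fix $n = 2$ and an orthonormal basis $(e_1, e_2)$. Setting $\phi(a) := f(a e_1)$ and $\psi(b) := f(b e_2)$ for $a, b \in \C$, the additivity relation across the orthogonal pair $a e_1 \perp b e_2$ gives $f(a e_1 + b e_2) = \phi(a) + \psi(b)$; note $\phi$ and $\psi$ are sublinear on $\C$ since $\|a e_1\| = |a|$. Now I apply additivity in the rotated orthonormal basis $u = \tfrac{1}{\sqrt 2}(e_1 + e_2)$, $u^\perp = \tfrac{1}{\sqrt 2}(-e_1 + e_2)$. Decomposing $s u + t u^\perp = \tfrac{s-t}{\sqrt 2} e_1 + \tfrac{s+t}{\sqrt 2} e_2$ in the two bases and substituting $x = s/\sqrt 2$, $y = t/\sqrt 2$ yields
\[
\phi(x-y) + \psi(x+y) = \phi(x) + \psi(x) + \phi(-y) + \psi(y) \qquad (x, y \in \C). \quad (\star)
\]
The identical computation in the basis obtained by swapping $e_1 \leftrightarrow e_2$ gives $(\star)$ with the roles of $\phi$ and $\psi$ interchanged.

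It remains to use sublinearity to strip away all irregular behaviour. Solving $(\star)$ for the defect $\psi(x+y) - \psi(x) - \psi(y)$ and using that this expression is symmetric in $x$ and $y$ shows that the odd part $x \mapsto \phi(x) - \phi(-x)$ is additive; being also sublinear it must vanish, since for additive $g$ one has $g(nv)/\|n v\| = g(v)/\|v\|$ independent of $n$, so sublinearity forces $g(v) = 0$. Hence $\phi$, and by the swapped equation also $\psi$, are even. With evenness, $(\star)$ together with its image under $y \mapsto -y$ combine to force $\phi = \psi$, whereupon $(\star)$ collapses to the parallelogram identity $\phi(x+y) + \phi(x-y) = 2\phi(x) + 2\phi(y)$. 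A routine induction then gives $\phi(n a) = n^2 \phi(a)$ for all integers $n$, so that $f(n a e_1)/\|n a e_1\| = n\,\phi(a)/|a| \to 0$ forces $\phi \equiv 0$; hence $\psi \equiv 0$ and $f|_\Pi \equiv 0$.

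The only genuine obstacle is that $f$ is assumed neither continuous nor measurable, so one cannot invoke classical Gleason-type regularity to rule out pathological additive or quadratic solutions of $(\star)$; these must be excluded by hand. This is precisely where sublinearity does the work twice: it annihilates the additive (odd) component directly, and it annihilates the quadratic (even) component through the integer homogeneity $\phi(n a) = n^2 \phi(a)$ extracted from the parallelogram law. Once both are handled, no regularity hypothesis on $f$ is required.
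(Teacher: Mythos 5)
Your proof is correct. At the strategic level it coincides with the paper's: both arguments first annihilate the odd part of $f$ by playing sublinearity against a degree-one scaling relation, and then annihilate the even part by playing sublinearity against a degree-two scaling relation. The implementations, however, are genuinely different. The paper stays basis-free in $\C^n$ and runs everything off the single identity $f(u) = 2f(u/2)+f(v/2)+f(-v/2)$ for orthogonal $u,v$ of equal length: the odd part is again a generalized frame function satisfying $g(u)=2g(u/2)$, evenness then yields weak radiality, and weak radiality yields $f(u)=4f(u/2)$. You instead restrict to a complex $2$-plane (a legitimate reduction, since orthogonality and sublinearity are inherited), pass to coordinates, and extract the functional equation $(\star)$; there the odd parts of $\phi$ and $\psi$ become honest Cauchy-additive functions on $\C$, the role of the paper's weak radiality is played by the identity $\phi=\psi$, and the even part is pinned down by the exact parallelogram law before the homogeneity $\phi(na)=n^2\phi(a)$ kills it. The paper's route buys brevity and dimension-independence (one two-line lemma applied three times); yours buys a transparent reduction to classical functional equations and makes explicit that the even part of a generalized frame function on a $2$-plane is a (not necessarily measurable) quadratic-type function. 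The only point you should make explicit in a final write-up is that the identities $f(ae_1+be_2)=\phi(a)+\psi(b)$ and $(\star)$ are derived from additivity over \emph{nonzero} orthogonal pairs, so the degenerate cases where a coordinate or one of $s,t$ vanishes must be checked separately using $f(0)=0$; they all hold, so nothing breaks.
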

The proof of the theorem will be based on the following fundamental identity, which follows directly from the definition:
\begin{lemma}\label{BasicIdentityFrames}
Let  $f: \C^n \to \R$ be a generalized frame function. If $u,v \in \C^n \setminus\{0\}$ are orthogonal vectors of the same length, then
\[
f(u) = 2\cdot f\left(\frac{u}{2}\right)+f\left(\frac{v}{2}\right)+f\left(\frac{-v}{2}\right).
\]
\end{lemma}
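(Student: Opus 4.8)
The plan is to decompose $u$ into a sum of two orthogonal vectors in a way that makes the factors $u/2$ and $\pm v/2$ surface, and then to split each of those two vectors once more. First I would set $a := \frac{u+v}{2}$ and $b := \frac{u-v}{2}$, so that $a + b = u$. The crucial observation is that $a$ and $b$ are orthogonal: using that the Hermitian inner product satisfies $\langle u, v\rangle = \langle v, u\rangle = 0$ and $\langle u, u\rangle = \|u\|^2 = \|v\|^2 = \langle v, v\rangle$, one computes $\langle a, b\rangle = \tfrac{1}{4}(\langle u,u\rangle - \langle v,v\rangle) = 0$, the two cross terms vanishing by orthogonality of $u$ and $v$. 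I would also record that $a, b \neq 0$: if $a = 0$ then $u = -v$, which would force $\langle u, v\rangle = -\|v\|^2 \neq 0$, a contradiction, and symmetrically for $b$. Applying the defining additivity of the generalized frame function $f$ to the orthogonal pair $(a,b)$ then yields $f(u) = f(a) + f(b)$.

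Next I would split each of $a$ and $b$ a second time. Since $u \perp v$, the vectors $u/2$ and $v/2$ are orthogonal and both nonzero, so $f(a) = f(u/2 + v/2) = f(u/2) + f(v/2)$; likewise $u/2$ and $-v/2$ are orthogonal and nonzero, giving $f(b) = f(u/2 - v/2) = f(u/2) + f(-v/2)$. Substituting these two expansions into $f(u) = f(a) + f(b)$ produces exactly the claimed identity $f(u) = 2 f(u/2) + f(v/2) + f(-v/2)$.

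The computation is essentially immediate once the correct decomposition is chosen, so I do not expect a serious obstacle. The only point demanding genuine care is the orthogonality of $a$ and $b$ over $\C^n$, where a priori the cross terms $\langle u, v\rangle$ and $\langle v, u\rangle$ are complex conjugates rather than literally equal; since both vanish by hypothesis this causes no trouble, and it is precisely the equal-length assumption $\|u\| = \|v\|$ that is needed to cancel the remaining diagonal terms $\langle u,u\rangle - \langle v,v\rangle$. I would therefore foreground the verification $\langle a,b\rangle = 0$ and the non-vanishing of $a,b$, after which the two applications of the additivity axiom finish the argument.
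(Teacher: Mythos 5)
Your proof is correct and follows exactly the same route as the paper: decompose $u = \frac{u+v}{2} + \frac{u-v}{2}$, use the equal-length hypothesis to get orthogonality of the two halves, and then split each half once more using $u \perp v$. The extra care you take in verifying $\langle a,b\rangle = 0$ over $\C$ and the non-vanishing of $a$ and $b$ is a welcome refinement of details the paper leaves implicit.
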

\begin{proof} If $u$ and $v$ are orthogonal, then so are $u/2$ and $\pm v/2$. If, moreover, $u$ and $v$ have the same length, then also $(u+v)/2$ and $(u-v)/2$ are orthogonal. Thus using repeatedly the defining property of a generalized frame function we obtain
\[
f(u) = f\left(\frac{u+v}{2}+\frac{u-v}{2}\right) = f\left(\frac{u+v}{2}\right)+f\left(\frac{u-v}{2}\right)=f\left(\frac{u}{2}\right)+f\left(\frac{v}{2}\right)+f\left(\frac{u}{2}\right)+f\left(\frac{-v}{2}\right).
\]
\end{proof}
We will use the following terminology concerning functions $f: \C^n \to \R$. We say that $f$ is \emph{even} (respectively \emph{odd}) if $f(-v) =f(v)$ (respectively $f(-v) = -f(v)$) for all $v \in \C^n$, and that $f$ is \emph{radial} if $f(u) = f(v)$ for all $u,v \in \C^n$ with $\|u\|=\|v\|$. Moreover, we say that $f$ is \emph{weakly radial} if $f(u) = f(v)$ for all orthogonal $u,v \in \C^n$ with $\|u\|=\|v\|$. 

For $n \geq 3$ every weakly radial function is actually radial. Indeed, if $u,v \in \C^n$ are of the same length and $n \geq 3$, then there exists $w \in \C^n$ of the same length which is orthogonal to both of them, and thus $f(u) = f(w) = f(v)$. However, for $n\leq2$ the two concepts are different. In fact, for $n=1$ any function is weakly radial.

\begin{proof}[Proof of Theorem \ref{FFTrivial}] Throughout the proof let $n \geq 2$ and let $f: \C^n \to \R$ be a sublinear generalized frame function. We will show that $f$ is trivial by applying Lemma \ref{BasicIdentityFrames} three times.

\textsc{Claim 1:} If $f$ is odd, then $f\equiv 0$.

Assume that $f$ is odd and let $u \in \C^n \setminus\{0\}$. Since $n \geq 2$ we can find $v \in \C^n \setminus \{0\}$, which is orthogonal to $u$ and of the same length. Since $f$ is odd, we deduce from Lemma \ref{BasicIdentityFrames} that $f(u) = 2\cdot f(u/2)$. We deduce that for every $n \in \mathbb N$,
\[
f(u) = \frac{f(2^n u)}{2^n}.
\]
Now by sublinearity of $f$ the right hand side tends to $0$ as $n \to \infty$, whence $f(u) = 0$, showing that every odd sublinear generalized frame function is trivial.

\textsc{Claim 2:} $f$ is even.

Define $\widetilde f(u) := f(u) -f(-u)$. We clearly have $f(0) = 0$ and if $u, v\in \C^n \setminus \{0\}$ are orthogonal, then so are $-u$ and $-v$, whence we have 
\[
\widetilde{f}(u+v) = f(u+v)-f(-u-v) = f(u)+f(v) -f(-u) -f(-v) = \widetilde{f}(u) +\widetilde{f}(v),\]
i.e. $\widetilde{f}$ is a generalized frame function as well. Now sublinearity of $f$ implies sublinearity of $\widetilde{f}$, and $\widetilde{f}$ is odd by construction. We deduce from \textsc{Claim 1} that $\widetilde{f} = 0$, i.e. $f$ is even.

\textsc{Claim 3:} $f$ is weakly radial.

We know from \textsc{Claim 2} that $f$ is even. It thus follows from Lemma \ref{BasicIdentityFrames} that for every pair $(u,v)$ of orthogonal vectors in $\C^n \setminus\{0\}$ of the same length we have
\[
f(u) = 2\cdot f(u/2) + 2\cdot f(v/2).
\]
However, the right hand side is symmetric in $u$ and $v$, whence we obtain
\[
f(u) = 2\cdot f(u/2) + 2\cdot f(v/2) = 2\cdot f(v/2) + 2\cdot f(u/2) = f(v).
\]
This shows that $f$ is weakly radial.

\textsc{Claim 4:} $f$ is trivial.

Since $f(0) = 0$ it suffices to show that $f(u) = 0$ for every $u \in \C^n \setminus\{0\}$. Given such $u$ we can always find $v\in\C^n \setminus\{0\}$ of the same length as $u$ and orthogonal to $u$. Since $f$ is weakly radial we have 
\[f(u/2) = f(v/2).\]
On the other hand, as we have seen already in the proof of \textsc{Claim 3}, the fact that $f$ is even together with Lemma \ref{BasicIdentityFrames} implies
\[
f(u) = 2\cdot f(u/2) + 2\cdot f(v/2).
\]
Combining both identities we obtain
\[
f(u) = 4 \cdot f(u/2),
\]
which in turn implies
\[
f(2^n u) = 4 \cdot f(2^{n-1}u) = \dots = 4^n f(u), 
\]
or equivalently,
\[
f(u) = 2^{-n}\|u\|\cdot \frac{f(2^nu)}{\|2^nu\|}
\]
Now by sublinearity of $f$, both factors on the right hand side converge to $0$ as $n \to \infty$, showing that $f$ is trivial.
\end{proof}

\newpage

\end{document}